\numberwithin{equation}{section}
\newtheorem{prop}{Proposition}
\newtheorem{lemma}[prop]{Lemma}
\newtheorem{thm}[prop]{Theorem}
\newtheorem{cor}[prop]{Corollary}
\numberwithin{prop}{section}
\theoremstyle{definition}
\newtheorem{defn}[prop]{Definition}
\newtheorem{rmk}[prop]{Remark}
\let\waymore\gg
\newcommand{\del}{\partial}
\newcommand{\delb}{\bar{\partial}}\newcommand{\dt}{\frac{\partial}{\partial t}}
\newcommand{\brs}[1]{\left| #1 \right|}
\newcommand{\gG}{\Gamma}
\renewcommand{\gg}{\gamma}
\newcommand{\gD}{\Delta}
\newcommand{\gd}{\delta}
\newcommand{\gs}{\sigma}
\newcommand{\gl}{\lambda}
\newcommand{\gw}{\omega}
\newcommand{\ga}{\alpha}
\newcommand{\gb}{\beta}
\newcommand{\gL}{\Lambda}
\newcommand{\N}{\nabla}
\newcommand{\FF}{\mathcal F}
\newcommand{\VV}{\mathcal V}
\newcommand{\til}[1]{\widetilde{#1}}
\renewcommand{\bar}[1]{\overline{#1}}
\renewcommand{\i}{\sqrt{-1}}
\newcommand{\hook}{\lrcorner}
\newcommand{\bga}{\bar{\ga}}
\newcommand{\IP}[1]{\left<#1\right>}
\newcommand{\HH}{\mathcal{H}}
\DeclareMathOperator{\Rc}{Rc}
\DeclareMathOperator{\Rm}{Rm}
\DeclareMathOperator{\tr}{tr}
\DeclareMathOperator{\Id}{Id}
\DeclareMathOperator{\grad}{grad}
\DeclareMathOperator{\Vol}{Vol}
\DeclareMathOperator{\Kod}{Kod}
\DeclareMathOperator{\End}{End}
\DeclareMathOperator{\Real}{Re}
\begin{document}

\title[Classification of solitons for pluriclosed flow on complex 
surfaces]{Classification of solitons for pluriclosed flow on complex surfaces}

\begin{abstract} We give a classification of compact solitons for the 
pluriclosed flow on complex surfaces.  First, by exploiting results from the 
Kodaira classification of surfaces, we show that the complex surface underlying 
a  
soliton must be K\"ahler except for the possibility of steady solitons on 
minimal Hopf surfaces.  Then, we 
construct steady solitons on all class $1$ Hopf surfaces by exploiting a 
natural symmetry ansatz.
\end{abstract}

\date{\today}

\author{Jeffrey Streets}
\address{Rowland Hall\\
         University of California\\
         Irvine, CA 92617}
\email{\href{mailto:jstreets@uci.edu}{jstreets@uci.edu}}

\maketitle

\section{Introduction}

The pluriclosed flow is a geometric evolution equation generalizing the 
K\"ahler-Ricci flow to more general complex, non-K\"ahler manifolds.  As shown 
in (\cite{PCFReg}, Theorem 1.1), this flow is gauge equivalent to a certain 
natural coupling of the Ricci flow and the heat flow for a 
closed three-form first appearing in mathematical physics, namely
\begin{gather} \label{f:GRF}
\begin{split}
\dt g_{ij} =&\ - 2 \Rc_{ij} + \tfrac{1}{2} H_i^{pq} H_{jpq},\\
\dt H =&\ \gD_d H.
\end{split}
\end{gather}
We will refer to this system of equations informally as ``generalized Ricci 
flow."  As shown in (\cite{OSW} Proposition 3.1),  generalized Ricci flow 
admits a 
Perelman-type 
energy monotonicity formula, and is in fact the gradient flow of a certain 
Schr\"odinger operator.  This indicates that, as in the case of Ricci flow, 
soliton solutions of (\ref{f:GRF}) should be expected as long time limits and 
singularity models for this flow.  The steady soliton equations implied by the 
gradient formulation take the form
\begin{gather}
\begin{split}
\Rc - \tfrac{1}{4} H^2 + \N^2 f =&\ 0,\\
\tfrac{1}{2} d^* H + i_{\tfrac{1}{2} \N f} H =&\ 0.
\end{split}
\end{gather}
  Thus a fundamental step in understanding the regularity and long time 
behavior the 
pluriclosed flow is to understand the existence and uniqueness of solutions to 
this system of equations.

The first step is to understand fixed points of the pluriclosed flow, where $f$ 
above is a constant function.  On the diagonal Hopf surfaces, there is a 
well-known metric which is 
a non-K\"ahler fixed point of pluriclosed flow, which is in fact the 
\emph{unique} non-K\"ahler fixed point 
on compact complex surfaces up to scaling (cf. \S \ref{s:inducedsurfaces}).  To address 
genuine, non-trivial soliton solutions, first recall the fundamental fact that 
any compact steady soliton for the Ricci flow is 
automatically Einstein (\cite{Ivey3} Proposition 1, \cite{Perelman1} \S 2.4).  
However, the Bianchi type 
identities/monotonicity formulae behind these proofs do not immediately 
generalize to the case of 
pluriclosed flow, and thus one is left with the possibility that nontrivial 
compact steady solitons may exist.  Moreover, natural conjectures on the 
pluriclosed flow loosely suggest the existence of such objects.  In particular, 
the main existence conjecture for pluriclosed flow (\cite{PCFReg} Conjecture 
5.2) 
suggests that, on minimal Hopf surfaces, the flow exists on $[0,\infty)$ and is 
nonsingular.  If true, a standard argument using the Perelman $\FF$-functional 
referenced above would then imply convergence to a 
compact steady soliton on such surfaces.

The main result of this paper confirms this expectation, and gives a nearly 
complete classification of compact pluriclosed 
solitons on complex surfaces.  

\begin{thm} \label{t:solitonthm} Let $(M^4, J)$ be a compact complex surface.  
Suppose $(g, f)$ is a pluriclosed soliton on $(M, J)$.
\begin{enumerate}
 \item If $(g,f)$ is an expanding soliton, then $(M, J)$ is K\"ahler, $f \equiv 
\mbox{const}$ and $g$ is the Aubin-Yau metric.
 \item If $(g,f)$ is a shrinking soliton, then $(M, J)$ is K\"ahler and $(g,f)$ 
is a K\"ahler-Ricci shrinking soliton.
 \item If $(g,f)$ is a steady soliton, then either
\begin{enumerate}
\item $(M, J)$ is K\"ahler, $f \equiv \mbox{const}$ and $g$ is a 
Calabi-Yau metric, or
\item $(M, J)$ is biholomorphic to a minimal Hopf 
surface.
\end{enumerate}
\end{enumerate}
Furthermore, on minimal Hopf surfaces of class 1, there exists a nontrivial 
pluriclosed 
steady soliton.
\end{thm}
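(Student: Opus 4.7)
The plan is to construct the required soliton explicitly by exploiting the natural symmetries of a class $1$ Hopf surface to reduce the steady soliton system to an ODE, and then analyzing that ODE. A class $1$ primary Hopf surface is a quotient $X = (\mathbb{C}^2 \setminus \{0\}) / \G$, where $\G$ is generated by a contraction of the form $(z_1, z_2) \mapsto (\ga z_1, \gb z_2)$ with $|\ga|, |\gb| > 1$. Two commuting holomorphic symmetries descend to $X$: the standard $T^2$-action $(z_1, z_2) \mapsto (e^{\i \gt_1} z_1, e^{\i \gt_2} z_2)$, and the one-parameter diagonal dilation $\phi_\tau(z_1, z_2) = (\ga^\tau z_1, \gb^\tau z_2)$, which contains $\G$ at integer $\tau$ and so induces a circle action modulo $\G$. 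Together these generate a three-dimensional torus of automorphisms of $X$ with generic orbits of dimension three.

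I would then seek $(g, f)$ invariant under this entire group on the regular locus $\{z_1 z_2 \ne 0\}$. Lifted to the universal cover, such an invariant Hermitian metric is diagonal in the basis $(dz_1 \, d\bar{z}_1,\, dz_2 \, d\bar{z}_2)$, with coefficients depending only on a single real coordinate $s$ taken to be a linear combination of $\log |z_1|^2$ and $\log |z_2|^2$ transverse to the generator of $\phi_\tau$; similarly $f = f(s)$. A direct computation of $\Rc$, $H$, $H^2$, and $\N^2 f$ for this ansatz should reduce the steady soliton system to an autonomous ODE system for two metric functions and $f'$ on the real line. The gradient-flow structure of (\ref{f:GRF}) together with the Bianchi-type identity for $H$ should produce at least one conserved first integral along solutions, which I would use to lower the order of the system and put it into a form amenable to phase-plane or shooting analysis.

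The main obstacle is to produce a solution which extends smoothly to the entire compact surface $X$. The limits $s \to \pm \infty$ correspond to approaching the two elliptic curves $\{z_1 = 0\}/\G$ and $\{z_2 = 0\}/\G$, along which the symmetry group develops a stabilizer and the orbit dimension drops; smoothness across each of these curves imposes precise asymptotic boundary conditions on the ansatz functions, analogous to the closing-up conditions that appear in cohomogeneity-one constructions of K\"ahler-Ricci solitons. One must therefore arrange for both of these boundary conditions to hold simultaneously using the finite-dimensional family of initial data available, which I anticipate requires a dimension count combined with a shooting argument in a carefully chosen parameter (for instance the asymptotic slope of $f$).

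Finally, to confirm that the constructed soliton is genuinely nontrivial --- in particular distinct from the standard non-K\"ahler fixed point on diagonal Hopf surfaces referenced in \S\ref{s:inducedsurfaces} --- I would verify that $f$ is non-constant on the solution, so that the soliton is not a fixed point of the flow.
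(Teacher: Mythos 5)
Your proposal addresses only the final sentence of the theorem (existence of a nontrivial steady soliton on class $1$ Hopf surfaces) and says nothing about parts (1)--(3), which are the classification half of the statement and cannot be obtained from any explicit construction. Those parts require a priori arguments valid for an arbitrary compact pluriclosed soliton: rigidity of expanders via the expanding entropy; the fact that a pluriclosed soliton on a K\"ahler surface is automatically a K\"ahler--Ricci soliton, which the paper proves by running the $1$-form reduction of pluriclosed flow and applying the strong maximum principle to $\brs{\del\ga}^2$ (Proposition \ref{l:solKrigidity}); the holomorphy of $\theta^{\sharp}+\N f$, obtained by comparing the two gauge-equivalent flows and invoking Pontecorvo's classification of metrics compatible with a family of complex structures (Proposition \ref{p:solitonholo}); and the Kodaira-classification input --- Gauduchon's plurigenera theorem forcing $\Kod(M)=-\infty$ from $c_1\cdot[\gw]>0$, the computation $-\gl=-c_1^2$ forcing $b_2=0$ and $\gl=0$, and the exclusion of Inoue surfaces via Teleman's result --- which is what pins down minimal Hopf surfaces and rules out non-K\"ahler shrinkers (Proposition \ref{t:solclass}). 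As written, your argument proves at most the ``Furthermore'' clause.

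For the existence part itself your skeleton (impose a cohomogeneity-one symmetry, reduce to an ODE system in a transverse coordinate, shoot to meet smooth-closing conditions at the two elliptic curves, with the shooting parameter tied to $\ln\brs{\gb}/\ln\brs{\ga}$) is genuinely parallel to the paper, which instead starts from two Killing fields supplied by the Sasakian construction and derives the third symmetry $J\N f$ a posteriori from the soliton equations (Proposition \ref{p:invprop}). However, two of your concrete steps are problematic. First, $T^3$-invariance does \emph{not} force the metric to be diagonal in $dz_i\, d\bar z_j$: invariance permits an off-diagonal component proportional to $\bar z_1 z_2\, dz_1\, d\bar z_2$, and the solitons built from the contact form $\eta_{\ga,\gb}=\gs^{-1}(x_1dy_1-y_1dx_1+x_2dy_2-y_2dx_2)$ carry exactly such terms, so restricting to a diagonal ansatz is unjustified and risks being empty. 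Second, the ``conserved first integral from the gradient structure'' must be exhibited explicitly --- in the paper it is $e^{-f}\tr_{\gw^T}F^Z=\mbox{const}$ together with $F^W=0$ (Proposition \ref{p:solitonreduction}) --- and after solving the reduced ODEs one still has to realize the resulting transverse $2$-form as the curvature of a genuine invariant connection (done by rescaling and the basic $\del_b\delb_b$-lemma, cf.\ Lemma \ref{l:Hermconnvar}) and to handle secondary Hopf surfaces. Finally, the shooting argument is where the analytic work lies: the paper's Proposition \ref{p:ODEexistence} needs the growth/control/decay phase analysis and a continuity argument to realize every ratio $\rho>1$ of boundary slopes, and your sketch supplies no substitute for it.
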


\begin{rmk}
\begin{enumerate}
\item The 
rigidity of expanding solitons follows from a Perelman-type expanding entropy 
formula for generalized Ricci flow, as already observed in (\cite{PCFReg} 
Corollary 6.11).  In fact this rigidity holds in any dimension, and moreover 
any expanding soliton of generalized Ricci flow (i.e. not necessarily 
pluriclosed) must satisfy $H \equiv 0$ with the underlying metric Einstein 
(\cite{Streetsexpent} Proposition 6.4).
\item In the shrinking case there is a 
complete picture of the existence and uniqueness following from a long series 
of 
works in K\"ahler geometry.  The complex surface underlying a K\"ahler-Ricci 
soliton must be Fano, and so for complex surfaces must be either $\mathbb 
{CP}^1 \times \mathbb{CP}^1$ or $\mathbb {CP}^2 \# k \bar{\mathbb{CP}}^2, 1 
\leq k \leq 8$.  The spaces $\mathbb{CP}^1 \times \mathbb {CP}^1$ and $\mathbb 
{CP}^2$ admit natural K\"ahler-Einstein metrics coming from their realization 
as symmetric spaces.  The existence of K\"ahler-Einstein metrics on blowups of 
$\mathbb {CP}^2$ for the cases $k=3,4$ was established by Tian-Yau 
\cite{TianYauKE}, and for $5 \leq k \leq 8$ by Tian \cite{TianKE}.  The cases 
$k=1,2$ cannot admit K\"ahler-Einstein metrics due to Matsushima's obstruction 
\cite{Matsushima}, but do admit K\"ahler-Ricci solitons.  By exploiting a 
codimension $1$ 
symmetry reduction, Koiso \cite{Koisorot} established the existence of a 
soliton in 
the case $k=1$.   Exploiting toric symmetry, Wang-Zhu \cite{WangZhuKRS} constructed a soliton in the case $k=2$.  Uniqueness of these solitons was established by Tian-Zhu 
\cite{TianZhu}.
\item The construction of steady solitons on Hopf surfaces of class $1$ leaves 
open the question of existence on Hopf surfaces of class $0$.  While the loose 
argument given above suggesting the existence of steady solitons applies in 
principle to these surfaces, it is possible that a ``jumping'' phenomenon 
occurs for pluriclosed flow, where the convergence in Cheeger-Gromov topology 
allows for the biholomorphism type of the complex structure to change in the 
limit.  As it is known that Hopf surfaces of class $1$ occur as the central 
fiber in a family of complex surfaces, all other fibers of which all 
biholomorphic to the same class $0$ surface, this possibility could certainly 
occur.  Furthermore, we do not address the question of uniqueness.  This question is likely difficult, since for 
instance the proof of uniqueness of K\"ahler-Ricci solitons 
(\cite{ZhuUniqueness, TianZhu}) requires a number of delicate a priori 
estimates which are difficult to generalize to this setting.
\end{enumerate}
\end{rmk}

The proof of Theorem \ref{t:solitonthm} breaks into two phases.  First we 
establish the rigidity of shrinking solitons, as well as the restriction of the 
possible biholomorphism types of steady solitons.  These require exhibiting a 
priori structural results for 
solitons, and then comparing this structure against results from the 
Kodaira classification 
of surfaces.  An important initial step is to identify a natural holomorphic 
vector field associated to a steady pluriclosed soliton.  As is well-known, for 
K\"ahler-Ricci solitons, the gradient vector field associated to the soliton 
function is holomorphic.  This follows directly from the soliton equation for 
the Riemannian metric together with pointwise identities exploiting the 
K\"ahler condition (see Proposition \ref{p:KRSholo}).  Due to the weaker 
integrability of pluriclosed metrics, we cannot expect the same behavior for 
pluriclosed solitons.  Rather, as we show in \S \ref{s:solitonclass}, the 
vector field $\theta^{\sharp} + \N f$, where $\theta^{\sharp}$ is the Lee 
vector field (see \S \ref{s:background}), will always be a nontrivial 
holomorphic 
vector field, unless the metric is already K\"ahler-Einstein.  Another piece of 
input is a B\"ochner argument showing that pluriclosed solitons on K\"ahler 
manifolds are automatically K\"ahler-Ricci solitons.  With these tools in hand, 
we exploit deep results from the Kodaira classification of surfaces to 
rule out backgrounds other than minimal Hopf surfaces.

The second phase of the proof is a nearly explicit construction of solitons on 
class $1$ Hopf surfaces.  To begin we use the correspondence between Sasakian 
$3$-manifolds and Hermitian 
surfaces first introduced by Vaisman \cite{VaismanLCK}.  This in particular 
allows us to construct pluriclosed metrics with a pair of holomorphic Killing 
fields.  We note here that this construction was originally used to construct 
locally conformally K\"ahler metrics on complex surfaces \cite{Belgunmetric, 
GauduchonLCK}, and while it is 
natural to imagine that such metrics play a distinguished role for pluriclosed 
flow, this does not seem to be the case (cf. Remark \ref{r:notSLCK}).  
Nonetheless, the 
pluriclosed flow will preserve invariance by holomorphic vector fields, and 
thus it is natural to look for solitons in this ansatz.  The next step is to 
reduce the flow and soliton equations in this setting to equations depending 
only on the directions transverse to the foliation generated by the holomorphic 
Killing fields.\footnote{Surprisingly, these flow equations 
reduce to a natural coupling of Ricci flow and Yang-Mills flow 
(Ricci-Yang-Mills flow) introduced by the author \cite{Streetsthesis} and Young 
\cite{Youngthesis} (see Remark \ref{r:RYM}).}

We complete the construction by constructing solutions in this symmetry class.  
Note that, as described so far, the construction is of cohomogeneity $2$, and 
thus one would expect to be faced with solving a PDE.  However, a crucial extra 
symmetry arises in this setting which allows for a further reduction.  It is by 
now a well-known fact (\cite{Ricciflowbook} p. 241, \cite{ChenLuTian}) that the 
shrinking Ricci soliton equations on 
Riemann surfaces acquire a natural holomorphic isometry generated by $J \N f$, 
where $f$ denotes the soliton function.  This feature persists to our setting 
(Proposition \ref{p:invprop}), which allows us to reduce to an ordinary 
differential equation in a single parameter generated by $\N f$.  In the case 
of elliptic Hopf surfaces, the isometry $J \N f$ corresponds to the rotational 
symmetry of the base orbifold, which is either a so-called ``teardrop'' or 
``football'' (see \S \ref{s:inducedsurfaces}).  In the general, irregular, 
case, the vector 
field $J \N f$ is a certain explicit holomorphic vector field which is 
transverse to the underlying Sasaki structure.  Hence the problem is now 
reduced to one parameter, and so we are faced with solving a certain nonlinear 
system of ODE.  Through a series of estimates we produce the necessary 
solutions as well as a  qualitative picture of their behavior, finishing the 
existence proof.

\begin{rmk} Our construction is closely related to some classic constructions 
for 
Ricci flow.  In his study of the Ricci flow on surfaces Hamilton 
\cite{Hamiltonsurfaces} investigated Ricci solitons on surfaces, showing that 
there are no compact shrinking solitons other than the round metric on $S^2$.  
He also mentions nontrivial solitons which exist on orbifolds.  A further 
analysis of these resulting ODEs yielded the existence of Sasaki-Ricci solitons 
on $3$-manifolds in \cite{WangZhangSRF3}.   These connections are however 
thematic, and our analysis is distinct from that underlying Ricci or 
Sasaki-Ricci solitons.
\end{rmk}

We also note that Theorem \ref{t:solitonthm} provides an interesting conceptual 
distinction between generalized Ricci flow and Ricci flow.  Theorem 
\ref{t:solitonthm} provides examples of nontrivial 
gradient steady solitons for the generalized Ricci flow in dimension $4$, and 
by 
taking products with tori yields such structures in all dimensions $n \geq 4$.  
Interestingly, a careful examination of the our construction reveals a product 
structure, yielding an example of a three-dimensional soliton as well.  Thus we 
obtain:

\begin{cor} \label{c:solitoncor} There exist nontrivial compact generalized 
steady 
solitons in all dimensions $n \geq 3$.
\end{cor}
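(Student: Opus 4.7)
My plan is to derive Corollary \ref{c:solitoncor} from the four-dimensional soliton supplied by Theorem \ref{t:solitonthm} by combining two ingredients: a routine product construction that raises the dimension, and an identification of a distinguished isometric product factor within the Hopf surface soliton itself that yields a three-dimensional example.

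For the first ingredient, I would verify that if $(g, H, f)$ is a compact nontrivial steady generalized soliton on $M^n$ and $(T^k, g_{\text{flat}})$ is a flat torus, then $(M \times T^k, g + g_{\text{flat}}, H, f)$, with $H$ and $f$ pulled back from $M$, is again a compact steady generalized soliton. This is immediate from the soliton equations
\begin{gather*}
\Rc - \tfrac{1}{4} H^2 + \N^2 f = 0, \qquad \tfrac{1}{2} d^* H + i_{\tfrac{1}{2} \N f} H = 0,
\end{gather*}
since $\Rc$, $\N^2 f$, $H$ and $H^2$ all vanish on the flat torus factor and the product metric block-diagonalizes. Starting from the class $1$ Hopf surface soliton of Theorem \ref{t:solitonthm}, this produces compact nontrivial examples in every dimension $n \geq 4$.

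The main step is the three-dimensional case. Here I would inspect the explicit construction in the second phase of the paper's proof. The Hopf surface soliton is built via the Vaisman correspondence over a Sasakian three-manifold $(N^3, g_N, \xi, \eta)$, with a holomorphic $T^2$-action of Killing fields, and the further holomorphic isometry generated by $J \N f$ reducing the soliton system to an ODE in a single parameter. I would show that the resulting metric has the isometric product form $g = d\tau^2 + g_N$, where $\tau$ is a coordinate on the circle factor of $S^1 \times N = S^1 \times S^3$, and that moreover $\partial_\tau f \equiv 0$ and $\partial_\tau \hook H \equiv 0$. The soliton system then restricts to $(N^3, g_N, H|_N, f)$, producing a compact nontrivial steady generalized soliton in dimension three. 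Combining this with the product construction above then covers every $n \geq 3$.

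The main obstacle is this three-dimensional splitting. Since the Vaisman construction naturally couples the circle direction to the Sasakian base through the contact form $\eta$, it is not automatic that the conformal factor along the Reeb direction is constant along the flow of $\N f$, nor that the three-form $H$ absorbs all of the fibration's torsion. One must therefore verify from the explicit solutions of the reduced ODE that the $d\tau$ component of $H$ vanishes and that $f$ depends only on the transverse coordinate; once these two facts are in hand, the orthogonal splitting of the metric and the factorization of the soliton data follow immediately, and the three-manifold soliton can be read off directly from the reduced system.
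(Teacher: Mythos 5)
Your overall strategy is the same as the paper's: take products with flat tori to cover $n \geq 4$, and obtain the three-dimensional example by splitting a flat direction off of the Hopf surface soliton and restricting the data $(g,H,f)$ to a compact $S^3$ leaf. The torus-product step is fine as you state it. The issue is in your justification of the splitting, which is exactly the step on which the corollary hinges. You assert that once one checks that the $d\tau$-component of $H$ vanishes and that $f$ is independent of $\tau$, ``the orthogonal splitting of the metric\dots follow[s] immediately.'' It does not: those two facts say nothing about the metric itself, and by themselves they do not produce a parallel direction. What actually forces the splitting is the third reduced soliton equation of Proposition \ref{p:solitonreduction}, $F^W = 0$, together with the fact that $W$ is a unit-length Killing field for the invariant pluriclosed metric. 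Equivalently, as the paper argues, the reduced equations and Lemmas \ref{l:Leeformlemma}, \ref{l:LeeformLDlemma} show that $e^{-f}\theta$ (which in this ansatz is a constant multiple of $\mu^W$) is closed and dual to a Killing field, hence parallel, and the de Rham theorem then splits the universal cover as $(S^3 \times \mathbb R, g' \oplus dt^2)$. This ingredient is available in the construction, but your plan as written does not identify it, and it cannot be replaced by inspecting the ODE solutions for $H$ and $f$ alone.

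Two smaller corrections to fold in once the splitting is sourced correctly. First, the flat direction is the one dual to the modified connection form $\mu^W$ (which differs from the naive cylinder coordinate form $dt$ by an exact basic term coming from the $\del_b\delb_b$-lemma adjustment), so the leaves of the orthogonal foliation are graphs over $S^3$ rather than the slices $N \times \{t\}$; they are still compact copies of $S^3$, but one should phrase the argument on the universal cover and restrict to a leaf there, rather than asserting a global isometric product structure $S^1 \times S^3$ on the compact quotient. Second, once the splitting is in hand, the restriction of the soliton equation to a leaf uses $W \hook H = 0$ (which follows since $H$ has only the components $F^Z \wedge \mu^Z$ after $F^W = 0$, or from $H = \star\,\theta$ as in the paper) to get $(H')^2 = i^*(H^2)$, and the invariance of $f$ to get $(\N^2)' f' = i^*(\N^2 f)$; with those two identities your conclusion goes through exactly as in the paper's proof.
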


Here is an outline of the paper.  We begin in \S \ref{s:background} with a 
discussion of relevant background material.  In \S \ref{s:solitonclass} we 
prove the first 
part of Theorem \ref{t:solitonthm}, restricting the class of complex surfaces 
which can admit solitons to minimal Hopf surfaces.  In \S 
\ref{s:SasakiHermitian} we review the basic correspondence between Sasakian 
$3$-manifolds and associated Hermitian manifolds.  Next in \S \ref{s:invgeom} 
we make a more general investigation of pluriclosed metrics on complex surfaces 
admitting a pair of holomorphic Killing fields.  We reduce the pluriclosed flow 
and soliton equations in this invariant setting in \S \ref{s:invPCF}.  Lastly, 
in \S \ref{s:existence} we finish the existence proof through a detailed ODE 
analysis.

\vskip 0.1in

\textbf{Acknowledgements:} The author was supported by the NSF via DMS-1454854. 
 The author would like to thank Daniel Agress, Florin Belgun, Tristan Collins, 
Paul Gauduchon, and 
Massimiliano Pontecorvo for helpful conversations.

\section{Background} \label{s:background}

In this section we provide some very brief background on pluriclosed flow, 
referring the reader to \cite{PCF,PCFReg} for more details.  First, given a 
complex manifold $(M^{2n}, J)$, a Hermitian metric $g$ is called pluriclosed if 
its associated K\"ahler form $\gw$ satisfies $\i\del\delb \gw = 0$.  Associated 
to a pluriclosed metric is the Bismut connection, defined by
\begin{align*}
\N^B = \N^{LC} + \tfrac{1}{2} g^{-1} d^c \gw,
\end{align*}
where $\N^{LC}$ denotes the Levi-Civita connection and $d^c = \i \left(\delb - 
\del \right)$.  This is the unique Hermitian connection whose torsion is skew 
symmetric, and in this case also closed since $\gw$ is pluriclosed.  Let 
$\Omega^B$ denote the curvature of this connection, and let
\begin{align*}
\rho_B(X,Y) = \tfrac{1}{2} \Omega(X,Y,e_i,J e_i)
\end{align*}
denote the Bismut-Ricci form.  By general theory $d \rho_B = 0$, and $\rho_B 
\in \pi c_1$.  However, unlike the Chern-Ricci form, $\rho_B \notin 
\Lambda^{1,1}$, and we let $\rho_B^{1,1}$ denote the $(1,1)$ piece.  The 
pluriclosed flow equation takes the form
\begin{align*}
\dt \gw =&\ - \rho_B^{1,1}.
\end{align*}
As shown in (\cite{PCF} Theorem 1.2), this is a parabolic equation for $\gw$ 
which admits short time solutions on compact manifolds.

Connecting the pluriclosed flow to the system (\ref{f:GRF}) requires a 
nontrivial gauge transformation.  Associated to a Hermitian metric we have the 
Lee form $\theta = d^* \gw \circ J$.  Let $\gw_t$ denote a solution to 
pluriclosed flow, let $g_t$ denote the one parameter family of associated 
metrics, and $H_t = d^c \gw_t$ the one parameter family of associated torsion 
tensors.  Let $\phi_t$ denote the one-parameter family of diffeomorphisms 
generated by $\theta_t^{\sharp}$.  Then (\cite{PCFReg} Theorem 6.5) yields that 
$(\phi_t^* g_t, \phi_t^* H_t)$ is a solution to (\ref{f:GRF}), up to 
reparameterizing time.  We record one key curvature identity behind this 
theorem which is relevant to what follows:

\begin{prop} (cf. \cite{PCFReg} Theorem 6.5) \label{p:BismutGR} Let $(M^{2n}, 
g, J)$ be a complex manifold with pluriclosed metric $g$.  Then
\begin{align*}
\rho^{1,1}_B(J \cdot, \cdot) = \Rc - \tfrac{1}{4} H^2 + \tfrac{1}{2} 
L_{\theta^{\sharp}} g.
\end{align*}
\end{prop}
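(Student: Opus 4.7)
The plan is to express the Bismut connection as $\N^B = \N^{LC} + A$, where $A(X)Y = \tfrac{1}{2} g^{-1}(H(X,Y,\cdot))$ is the contorsion, and then trace the resulting curvature formula to extract $\rho_B$. The standard identity for the change of curvature under modification by a $(1,2)$-tensor yields
\begin{align*}
\Omega^B(X,Y,Z,W) =&\ R^{LC}(X,Y,Z,W) + \tfrac{1}{2}\bigl((\N_X H)(Y,Z,W) - (\N_Y H)(X,Z,W)\bigr) + Q(X,Y,Z,W),
\end{align*}
where $Q$ is an explicit quadratic in $H$ arising from the $[A(X),A(Y)]$ term, which the skew-symmetry of $H$ simplifies considerably. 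Forming $\rho_B(X,Y) = \tfrac{1}{2}\Omega^B(X,Y,e_i,Je_i)$ and projecting onto the symmetric $(1,1)$ piece via $\tfrac{1}{2}\bigl(\rho_B(JX,Y) + \rho_B(X,JY)\bigr)$ produces three contributions corresponding to the three terms on the right-hand side of the claimed identity.

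For the $R^{LC}$ contribution, the symmetrized trace reduces to $\Rc(X,Y)$ via the algebraic Bianchi identity together with $J^2 = -\Id$, in direct analogy with the K\"ahler identification of the Chern-Ricci form with the Ricci tensor, only now keeping solely the symmetric part. For the quadratic $H$ contribution, a direct contraction combined with the Hermitian symmetry $H(JX,JY,Z) + H(JX,Y,JZ) + H(X,JY,JZ) = H(X,Y,Z)$, which follows from $H = -d^c\omega$, gives precisely $-\tfrac{1}{4}H_i^{pq}H_{jpq}$.

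The delicate part is the first-order $\N H$ contribution. Here I would invoke the pluriclosed hypothesis $dH = 0$ to cyclically commute indices on $\N H$, together with an algebraic identity expressing the Lee form as a $J$-trace of $H$ of the shape $\theta_k \sim J^{ij}H_{ijk}$. After some bookkeeping, these combine to produce $\tfrac{1}{2}(\N_i \theta_j + \N_j \theta_i) = \tfrac{1}{2}(L_{\theta^\sharp} g)_{ij}$. This is the main technical obstacle: without $dH = 0$, extraneous terms would remain in the trace of $\N H$ that cannot be absorbed into a Lie derivative of $\theta^\sharp$, so it is essential to systematically use the pluriclosed assumption to reorganize the derivatives of $H$ into the symmetric derivative of the Lee form.
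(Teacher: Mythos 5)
The paper does not actually prove this statement internally; it records the identity with a citation to \cite{PCFReg} (Theorem 6.5), so your proposal has to stand on its own as a direct computation. Your overall plan --- write $\N^B=\N^{LC}+\tfrac{1}{2}g^{-1}H$, expand $\Omega^B$ into $R^{LC}$, a first-order $\N H$ term and a term quadratic in $H$, then trace against $\gw$ --- is a viable route (this is essentially how such identities are derived, cf. \cite{IvanovPapa}). The genuine gap is your central claim that the Levi--Civita contribution alone reduces to $\Rc$ ``via the algebraic Bianchi identity together with $J^2=-\Id$, in direct analogy with the K\"ahler identification.'' That K\"ahler argument uses $\N J=0$, i.e. the $J$-invariance $R^{LC}(X,Y,JZ,JW)=R^{LC}(X,Y,Z,W)$, which fails exactly when $H\neq 0$. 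What the $\gw$-trace of $R^{LC}$ computes is a $*$-Ricci-type tensor, whose difference from $\Rc$ is nonzero in the non-K\"ahler case and, rewritten in Hermitian data, contains both $\N H$-type and $H^2$-type terms. Concretely, for the standard Hopf metric on $S^3\times S^1$ (pluriclosed, $\rho_B^{1,1}=0$, $\theta^{\sharp}$ Killing), evaluating the correctly projected trace of the Levi--Civita piece at $(\xi,\xi)$, $\xi$ the Hopf field, gives $\tfrac{1}{2}\sum_i R^{LC}(J\xi,\xi,e_i,Je_i)=0$ because $J\xi$ is tangent to the flat $S^1$ factor, whereas $\Rc(\xi,\xi)>0$; so the term-by-term matching (Levi--Civita piece $\to\Rc$, quadratic piece $\to -\tfrac{1}{4}H^2$, $\N H$ piece $\to \tfrac{1}{2}L_{\theta^{\sharp}}g$) is false. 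The three contributions mix, and the entire content of the proposition is the bookkeeping --- using $dH=0$, the pair symmetry of $R^{LC}$, the first Bianchi identity with torsion, and the identity expressing $\theta$ as the contraction of $H$ with $\gw$ --- that shows the cross terms assemble into precisely $\tfrac{1}{2}L_{\theta^{\sharp}}g$ with the coefficient $-\tfrac{1}{4}$ on $H^2$. Your final paragraph asserts this reorganization rather than carrying it out, so the proof is incomplete exactly where the difficulty lies.

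Two smaller corrections. The symmetric $(1,1)$ projection is $\rho_B^{1,1}(JX,Y)=\tfrac{1}{2}\left(\rho_B(JX,Y)-\rho_B(X,JY)\right)$; the combination $\tfrac{1}{2}\left(\rho_B(JX,Y)+\rho_B(X,JY)\right)$ you use is antisymmetric in $X,Y$ (it is the $(2,0)+(0,2)$ part composed with $J$) and so cannot equal the symmetric right-hand side. Also, the identity $H(JX,JY,Z)+H(JX,Y,JZ)+H(X,JY,JZ)=H(X,Y,Z)$ merely expresses that $H=d^c\gw$ has no $(3,0)+(0,3)$ component and holds for every Hermitian metric; it is not where pluriclosedness enters. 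A correct write-up should either perform the full trace computation keeping all cross terms, or first establish the relation between the two Ricci-type traces of a metric connection with closed skew-symmetric torsion as in \cite{IvanovPapa} and then extract the Lee form term from it.
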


As a final important introductory remark, we note that there is a 
classification of fixed points of pluriclosed flow on complex surfaces.  As it 
happens there is only one non-K\"ahler example which we now describe.  Define a 
metric on $\mathbb C^2 \backslash \{0,0\}$ via
\begin{align} \label{f:hopfmetric}
 g_{\mbox{\tiny{Hopf}}} = \frac{g_{E}}{\brs{z_1}^2 + \brs{z_2}^2},
\end{align}
where $g_E$ denotes the standard Euclidean metric on $\mathbb C^2$.  Note that 
the metric 
$g_{\mbox{\tiny{Hopf}}}$ is isometric to the metric $dr^2 \oplus g_{S^3}$ on 
$\mathbb R \times S^3$, with the $\mathbb R$ factor spanned by the radial 
direction.  Furthermore, $g_{\mbox{\tiny{Hopf}}}$ is compatible with the 
standard complex structure, and direct calculations show that it is 
pluriclosed, and moreover $\rho_B^{1,1}(g_{\mbox{\tiny{Hopf}}}) = 0$.  It is 
also invariant under actions $(z_1, z_2) \to (\ga z_1, \gb z_2)$, $\brs{\ga} = 
\brs{\gb}$, thus yielding a well-defined metric on diagonal Hopf surfaces (cf. 
\S \ref{s:inducedsurfaces} for this terminology).  

These turns out to be the only compact non-K\"ahler fixed points of pluriclosed 
flow.  To show this classification, one first exploits a B\"ochner argument 
(\cite{GauduchonIvanov} Theorem 2) to show that for any Hermitian surface with 
$\rho_B^{1,1} = 0$, the Lee form is parallel.  If the Lee form vanishes, the 
metric is Calabi-Yau, and if not, the induced deRham splitting shows that the 
universal cover is isometric, up to scaling, to $\mathbb R \times S^3$ with the 
standard product metric.  One still has to identify the complex structure, and 
further work of Gauduchon (cf. \cite{GauduchonWeyl} III Lemma 11) shows that 
the 
only possible underlying complex surfaces are the standard Hopf surfaces, as 
claimed, and so the metrics as described above are the only examples.  See 
(\cite{GKRF}) for more detail.

\section{Classification of complex surfaces admitting solitons} 
\label{s:solitonclass}

In this section we give a classification of the possible complex surfaces 
admitting pluriclosed soliton structures.  To begin we define the correct 
notion of pluriclosed soliton, which is delicate due to the fact that it is 
related to the generalized Ricci flow via a nontrivial, and moreover 
non-gradient gauge transformation.  With this definition in place we show that 
a compact pluriclosed soliton on a surface is either K\"ahler-Einstein, or 
non-K\"ahler with the associated vector field $\theta^{\sharp} + \N f$ being a 
nontrivial holomorphic vector field.  With this in place we apply results from 
the Kodaira classification of surfaces to show that in the non-K\"ahler case 
the underlying complex surface must be a Hopf surface.

\subsection{Basic definitions} \label{ss:basicdefs}

The Ricci soliton equation is in part justified as the critical point equation 
for Perelman's $\gl$-functional, defined for arbitrary Riemannian metrics.  
Thus, in the K\"ahler setting, a priori one only expects the the Ricci soliton 
PDE (cf. Definition \ref{d:KRS}) to hold for the \emph{Riemannian metric}, i.e. 
not necessarily the K\"ahler form.  
However, by exploiting the K\"ahler condition, an elementary, well-known 
argument (cf. 
Proposition \ref{p:KRSholo}) shows that the gradient of the soliton function 
$f$ is automatically a holomorphic vector field, which also implies the 
K\"ahler form version of the soliton equation.  We include this simple argument 
for convenience as it indicates why an elementary 
adaptation to the pluriclosed flow setting is not possible.

\begin{defn} \label{d:KRS} A \emph{K\"ahler-Ricci soliton} is a K\"ahler 
manifold $(M^{2n}, 
g, J)$ together with $\gl \in \{-1,0,1\}$ and $f \in C^{\infty}(M)$ such that 
\begin{align*}
\Rc - \gl g = \N^2 f = L_{\tfrac{1}{2} \N f} g.
\end{align*}
\end{defn}

\begin{prop} \label{p:KRSholo} Let $(M^{2n}, g, J, \gl, f)$ be a K\"ahler-Ricci 
soliton.  Then
\begin{align} \label{KFsolitoneqn}
\rho - \gl \gw = L_{\tfrac{1}{2} \N f} \gw,
\end{align}
and $\tfrac{1}{2} \N f$ is the real part of a holomorphic vector field.
\begin{proof} As the Ricci tensor of a K\"ahler metric is $(1,1)$, it follows 
from the soliton equation that $(\N^2 f)^{2,0 + 0,2} = 0$.  Hence the Hessian 
is $(1,1)$, and we can moreover compute that
\begin{align*}
(\N^2 f)^{1,1}(\cdot, J \cdot)_{ij} =&\ \tfrac{1}{2} \left[ (\N^2 f)_{ik} + 
(\N^2 f)_{pq} J_i^p J_k^q \right] J_j^k \\
=&\ \tfrac{1}{2} \left[ J_j^k (\N^2 f)_{ik} - (\N^2 f)_{pj} J_i^p \right]\\
=&\ \tfrac{1}{2} \left[ \N_i (J_j^k \N_k f) - \N_j (J_i^k \N_k f) \right]\\
=&\ - \tfrac{1}{2} (d d^c f)_{ij}.
\end{align*}
On the other hand we can compute, using the Cartan formula and that $d \gw = 0$,
\begin{align*}
L_{\tfrac{1}{2} \N f} \gw =&\ d i_{\tfrac{1}{2} \N f} \gw + i_{\tfrac{1}{2} \N 
f} d \gw = d i_{\tfrac{1}{2} \N f} \gw.
\end{align*}
Finally we also have
\begin{align*}
(i_{\N f} \gw)_j =&\ \N^i f \gw_{ij} = g^{ip} d_p f g_{iq} J_j^q = - (d^c f)_j.
\end{align*}
Thus (\ref{KFsolitoneqn}) follows.  Now note the equations above imply
\begin{align*}
L_{\tfrac{1}{2}\N f} \gw = (L_{\tfrac{1}{2} \N f} g)(\cdot, J \cdot) = 
L_{\tfrac{1}{2} \N f} \gw - g(\cdot, L_{\tfrac{1}{2} \N f} J \cdot),
\end{align*}
and hence $L_{\tfrac{1}{2} \N f} J \equiv 0$, i.e. $\N f$ the real part of 
a holomorphic vector field.
\end{proof}
\end{prop}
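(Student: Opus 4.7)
The plan is to derive the K\"ahler form soliton equation (\ref{KFsolitoneqn}) first, and then read off the holomorphicity statement as an almost immediate consequence. The heart of the matter is the K\"ahler identity $\N J = 0$ combined with the $J$-invariance of both $\Rc$ and $g$, which by the soliton equation forces the Hessian $\N^2 f = \Rc - \gl g$ to be of pure type $(1,1)$.

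First I would verify that the $2$-tensor $(\N^2 f)(\cdot, J\cdot)$ is in fact a $2$-form equal to $-\tfrac{1}{2} dd^c f$. Symmetry of the Hessian plus $J$-invariance gives antisymmetry in this pairing automatically, and then $\N J = 0$ allows $J$ to pass through a covariant derivative freely, converting $\N^2 f$ into a derivative of $d^c f$, i.e.\ $dd^c f$ up to the correct factor. In parallel, a Cartan-formula computation using $d\gw = 0$ yields $L_{\tfrac{1}{2}\N f}\gw = \tfrac{1}{2}d(i_{\N f}\gw)$, and a one-line index manipulation using $\gw_{ij} = g_{ik} J^k_j$ identifies $i_{\N f}\gw$ with $-d^c f$. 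Matching the two computations produces $-\tfrac{1}{2} dd^c f$ on both sides, which together with the soliton equation and $\rho = \Rc(J\cdot,\cdot)$ gives (\ref{KFsolitoneqn}).

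For the holomorphicity claim, I would invoke the standard almost Hermitian identity
\begin{align*}
L_X \gw = (L_X g)(\cdot, J\cdot) + g(\cdot, (L_X J)\cdot).
\end{align*}
The Ricci soliton equation gives $L_{\tfrac{1}{2}\N f} g = \Rc - \gl g$, whose pairing with $J$ on the right slot is exactly $\rho - \gl \gw$. Combining with (\ref{KFsolitoneqn}) already proved, the two contributions on the right of the displayed identity cancel, so $g(\cdot, (L_{\tfrac{1}{2}\N f}J)\cdot) \equiv 0$. Nondegeneracy of $g$ yields $L_{\tfrac{1}{2}\N f}J = 0$, which is exactly the condition that $\tfrac{1}{2}\N f$ be the real part of a holomorphic vector field.

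There is no analytic obstacle; the only real subtlety is keeping the sign and normalization conventions for $d^c$, $\gw$, and $\rho$ consistent across the two parallel computations of $-\tfrac{1}{2} dd^c f$. As the surrounding discussion emphasizes, the entire argument rests on $\N^{LC} J = 0$, a feature which genuinely fails in the non-K\"ahler pluriclosed setting (only the Bismut connection is compatible with $J$, and it has nontrivial torsion), and this is the precise point at which any naive attempt to generalize the proposition breaks down.
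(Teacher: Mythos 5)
Your proposal is correct and follows essentially the same route as the paper: identify the Hessian as a $(1,1)$-tensor whose pairing with $J$ is $-\tfrac{1}{2}dd^cf$, match this with the Cartan-formula computation $L_{\tfrac{1}{2}\N f}\gw = d\, i_{\tfrac{1}{2}\N f}\gw$ together with $i_{\N f}\gw = -d^c f$ to get (\ref{KFsolitoneqn}), and then use the Leibniz identity for $L_X(g(\cdot,J\cdot))$ plus nondegeneracy of $g$ to conclude $L_{\tfrac{1}{2}\N f}J = 0$. No gaps; only the sign/normalization bookkeeping you already flag needs care.
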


In the pluriclosed case the story is different in a subtle way.  As explained 
in \S \ref{s:background}, it is necessary to apply a 
nontrivial 
gauge transformation to the pluriclosed flow to yield a solution of 
(\ref{f:GRF}), and it is \emph{that} system of equations which is the gradient 
flow of a modified Perelman functional (\cite{OSW}, cf. \cite{Streetsexpent}).  
Thus self-similar solutions to pluriclosed flow must satisfy the critical point 
equation for this modified Perelman functional.

\begin{defn} \label{d:PCS} Let $(M^{2n}, J)$ be a complex manifold.  We say 
that a pair $(g,f)$ of a Riemannian metric $g$ and $f \in C^{\infty}(M)$ is a 
\emph{pluriclosed steady soliton} if $g$ is a pluriclosed metric on $(M, J)$ and
\begin{gather} \label{f:soliton}
\begin{split}
\Rc - \tfrac{1}{4} H^2 =&\ \N^2 f = L_{\tfrac{1}{2} \N f} g,\\
\tfrac{1}{2} d^* H =&\ i_{\tfrac{1}{2} \N f} H.
\end{split}
\end{gather}
\end{defn}

Crucially, as the underlying metric is only pluriclosed, it does not follow 
from 
direct local calculations as in Proposition \ref{p:KRSholo} that the gradient 
of the soliton function $f$ is 
automatically holomorphic.  Nonetheless by applying classification results of 
Pontecorvo \cite{PontecorvoCS} on metrics compatible with several complex 
structures, we are able to show that a compact soliton on a complex surface, it 
is either K\"ahler-Einstein or the associated vector field $\theta^{\sharp} + 
\N f$ is nontrivial and holomorphic.

\begin{prop} \label{p:solitonholo}Let $(M^{4}, g, J)$ be a compact pluriclosed 
steady soliton.  Then 
either
\begin{enumerate}
\item $(M^4, g, J)$ is hyperHermitian, i.e. it is biholomorphically isometric 
to a flat torus, K3 surface with Calabi-Yau metric, or a hyperHermitian Hopf 
surface
\item The vector field $\theta^{\sharp} + \N f$ is a nontrivial holomorphic 
vector field on $M$.
\end{enumerate}

\begin{proof} Let $(\til{g}_t, \til{H}_t)$ denote the unique solution to 
(\ref{f:GRF}) with initial condition $(g, H)$.  Let $\psi_t$ denote the 
one-parameter family of diffeomorphisms generated by $- \N_{\til{g}_t} f$.  By 
a standard argument using the soliton equations (\ref{f:soliton}) we know that 
$(\til{g}_t, \til{H}_t) = (\psi^*_t g, \psi_t^* H)$.  On the other hand, let 
$g_t$ be the solution to pluriclosed flow with the given initial 
condition, with associated torsion $H_t = d^c \gw_t$.  Let $\phi_t$ be the 
one-parameter family of 
diffeomorphisms generated by $\theta_t^{\sharp}$.  It follows from 
(\cite{PCFReg}, Theorem 6.5), that $(\phi_t^* g_t, \phi_t^* H_t)$ is the unique 
solution to (\ref{f:GRF}) with initial condition $(g, H)$.  Thus
\begin{align*}
(\phi_t^* g_t, \phi_t^* H_t) = (\til{g}_t, \til{H}_t) = (\psi_t^* g, \psi_t^* 
H),
\end{align*}
and hence, setting $\Psi_t = \phi_t \circ \psi^{-1}_t$, we see that $(\Psi_t^* 
g_t, \Psi_t^* H_t) = (g,H)$.  Observe that since $g_t$ is a pluriclosed metric 
with respect to $J$, it is in particular compatible with $J$, and hence $g = 
\Psi_t^* g_t$ is compatible with $\Psi_t^* J$.  In particular, we have shown 
that the metric $g$ is compatible with a (possibly trivial) one-parameter 
family of complex 
structures.  If the family of complex structures is not stationary, 
then this yields a continuous family of complex structures compatible with $g$. 
 It follows from (\cite{PontecorvoCS} Theorem 5.5) that the metric is 
hyperHermitian, in particular it is either a flat torus, a K3 surface with 
Calabi-Yau metric or a hyperHermitian Hopf surface.  
 
 If this family is stationary, then by definition the generating 
vector field $\theta^{\sharp} + \N f$ is holomorphic.  If the vector field is 
trivial, then one has $\theta = - df$.  It follows from from the conformal 
transformation law for the Lee form (cf \cite{Gauduchon1form} \S I.13) that 
$\theta_{e^{f} g} = \theta_g + df = 0$.  In particular, the conformally related 
Hermitian 
metric $e^{f} g$ is K\"ahler.  But by the uniqueness of the Gauduchon metric 
in a fixed conformal class (\cite{Gauduchon1form} Main Theorem), it follows 
that, after possibly modifying $f$ by a 
constant, $g = e^{f} g$ and so $f \equiv 0$.  This means that the metric 
$g$ is K\"ahler, and Ricci flat, and we have reverted to the first case.
\end{proof}
\end{prop}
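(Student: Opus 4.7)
The plan is to play two natural diffeomorphism flows against each other. On the one hand, the soliton equations (\ref{f:soliton}) are precisely the assertion that, under the generalized Ricci flow (\ref{f:GRF}) starting at $(g,H)$, the solution evolves by pullback under the one-parameter family of diffeomorphisms $\psi_t$ generated by $-\N f$; so $(\tilde g_t, \tilde H_t) := (\psi_t^* g, \psi_t^* H)$ solves (\ref{f:GRF}). On the other hand, let $g_t$ be the pluriclosed flow of $g$, with torsion $H_t = d^c \gw_t$, and let $\phi_t$ be the one-parameter family of diffeomorphisms generated by the time-dependent Lee vector field $\theta_t^{\sharp}$. By the gauge identification in (\cite{PCFReg} Theorem 6.5), $(\phi_t^* g_t, \phi_t^* H_t)$ is also a solution of (\ref{f:GRF}) with initial data $(g,H)$. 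Uniqueness of solutions to (\ref{f:GRF}) then identifies these two trajectories.

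Setting $\Psi_t := \phi_t \circ \psi_t^{-1}$, the identification reads $\Psi_t^* g_t = g$ and $\Psi_t^* H_t = H$. Since each $g_t$ is Hermitian with respect to the fixed complex structure $J$, pulling back by $\Psi_t$ shows that $g$ is Hermitian with respect to the one-parameter family $J_t := \Psi_t^* J$. Differentiating at $t=0$, the generator of $\Psi_t$ is $\theta^{\sharp} + \N f$, so the question of whether $J_t$ is stationary at $t=0$ is precisely the question of whether $\theta^{\sharp} + \N f$ is holomorphic.

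Now I split into two cases. If $J_t$ is not stationary, then $g$ admits a genuine one-parameter family of distinct compatible complex structures, and Pontecorvo's classification (\cite{PontecorvoCS} Theorem 5.5) of such Hermitian four-manifolds forces $g$ to be hyperHermitian and $(M,J)$ to be one of the three models listed in case (1). If $J_t$ is stationary then $\theta^{\sharp} + \N f$ is holomorphic, and all that remains is to show it cannot be identically zero. Assuming for contradiction $\theta = -df$, the conformal transformation law for the Lee form (\cite{Gauduchon1form} \S I.13) gives $\theta_{e^f g} = 0$, so $e^f g$ is K\"ahler. But the uniqueness of the Gauduchon representative in a conformal class (\cite{Gauduchon1form}) forces $e^f g$ to agree with $g$ after normalizing $f$ by an additive constant, so $f \equiv 0$ and $g$ itself is K\"ahler. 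Then the soliton equations degenerate to the Ricci-flat equation, and we are in the Calabi-Yau case of (1).

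The subtle point in this plan is matching the generators of $\phi_t$ and $\psi_t^{-1}$ cleanly at $t=0$: even though $\theta_t^{\sharp}$ depends on $t$ through $g_t$, only its initial value contributes to the generator of $\Psi_t$ at $t=0$, so that generator is indeed $\theta^{\sharp} + \N f$ with no correction terms. The other potential obstacle, namely carrying out a direct B\"ochner argument to force $\theta^{\sharp}+\N f$ to be holomorphic, is sidestepped entirely by this gauge-uniqueness approach, at the cost of having to invoke the classification result of Pontecorvo to handle the degenerate case where the family $J_t$ genuinely moves.
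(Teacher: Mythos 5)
Your proposal is correct and follows essentially the same route as the paper: the same comparison of the soliton gauge flow $\psi_t$ with the Lee-form gauge flow $\phi_t$ via uniqueness for (\ref{f:GRF}), the same dichotomy on whether $\Psi_t^* J$ is stationary, with Pontecorvo's theorem handling the moving case and the Gauduchon conformal/uniqueness argument ruling out $\theta^{\sharp}+\N f \equiv 0$. No substantive differences from the paper's proof.
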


\subsection{Classification of compact steady solitons}

\begin{prop} \label{l:solKrigidity} Let $(M^{2n}, J)$ be a compact K\"ahler 
manifold, and suppose $(g,f)$ is a pluriclosed steady or shrinking soliton on 
$M$.  Then $(g,f)$ is 
a K\"ahler-Ricci soliton.
\begin{proof}  We first address the case of a steady soliton.  First we 
construct a particular 
$1$-form reduction of pluriclosed flow as in \cite{StreetsPCFBI} \S 3 and \S 4. 
 As the background manifold is K\"ahler, by a short argument (cf. 
\cite{ASNDGKCY} Proposition 6.1) using Hodge theory and the result of 
Demailley-Paun \cite{DemaillyPaun}, there exists $\ga \in 
\Lambda^{1,0}$ and a K\"ahler metric $\til{\gw}$ such that $\gw = \til{\gw} + 
\del \bga + \delb \ga$.  Next, fix an arbitrary Hermitian metric $h$, and for 
short time fix a background metric for the flow $\til{\gw}_t = \til{\gw} - t 
\rho_C(h)$.  Note that in the notation of \cite{StreetsPCFBI} we have $\mu = 
0$.  We then apply (\cite{StreetsPCFBI} Lemma 3.2) to construct a $1$-parameter 
family of $(1,0)$-forms $\ga_t$ which satisfy
\begin{align*}
\dt \ga = \delb^*_{g_t} \gw_t - \frac{\i}{2} \del \log \frac{\det g_t}{\det h}.
\end{align*}
A straightforward calculation shows then that $\gw_t = \til{\gw}_t + \del 
\bga_t + \delb \ga_t$ is the given solution to pluriclosed flow.  Furthermore, 
since $\del \til{\gw}_t = 0$ for all $t$, we can apply \cite{StreetsPCFBI} 
Proposition 4.10 with $\eta = 0$ to conclude that $\del 
\ga$ satisfies the evolution equation
\begin{align} \label{f:tpe}
\left(\dt - \gD_{g_t}^C \right) \brs{\del \ga}^2 \leq - \brs{T_{g_t}}^2,
\end{align}
where $T_{g_t}$ denotes the torsion of the Chern connection of the evolving 
metric.  But since the solution is a soliton, and hence evolves purely by 
diffeomorphism, there exists a vector field $X$ such that $\dt \brs{\del \ga}^2 
= X 
\brs{\del \ga}^2$.  Thus $\brs{\del \ga}^2$ is a subsolution of a strictly 
elliptic 
equation with zero constant term, and it follows from the strong maximum 
principle that it is constant. 
 It thus follows that $\brs{T_{g_t}}^2 = 0$, and so the metric is K\"ahler, and 
hence a K\"ahler-Ricci soliton, as claimed.

The case of a shrinking soliton is essentially the same.  We make choices of 
$\til{\gw}$ and $h$ as above, and this time set $\til{\gw}_t = \til{\gw} - t( 
\rho_C (h) - \til{\gw})$.  Following the arguments of (\cite{StreetsPCFBI} \S 
3) it is straightforward to construct a one-parameter family of $(1,0)$ forms 
which this time satisfy the $1$-form reduction of normalized pluriclosed flow, 
i.e.
\begin{align*}
\dt \ga = \delb^*_{g_t} \gw_t - \frac{\i}{2} \del \log \frac{\det g_t}{\det h} 
+ \ga.
\end{align*}
One also easily checks then that then the $1$-parameter family of metrics 
$\til{\gw}_t + \delb \ga_t + \del \bga_t$ is a solution of
\begin{align*}
\dt \gw = - \rho_B^{1,1} + \gw,
\end{align*}
the normalized pluriclosed flow.  Furthermore, an elementary adaptation of 
(\cite{StreetsPCFBI} Proposition 4.10) shows that $\del \ga$ will still satisfy 
(\ref{f:tpe}).  The reason no normalization terms are present is due to the 
fact that $\brs{\del \ga_t}^2_{g_t}$ has zero homogeneity in terms of the 
metric scaling.  As the solution to normalized pluriclosed flow follows a 
one-parameter family of diffeomorphisms, one can argue using (\ref{f:tpe}) 
exactly as above in the steady case to conclude that $\gw_t$ is K\"ahler, and 
hence a K\"ahler-Ricci soliton.
\end{proof}
\end{prop}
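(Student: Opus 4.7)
The aim is to show that the pluriclosed metric $g$ is actually K\"ahler, i.e.\ $H = d^c \gw \equiv 0$; granted this, the Riemannian soliton equation (\ref{f:soliton}) collapses immediately to the K\"ahler-Ricci soliton equation for $g$. My plan is to run the pluriclosed flow starting from $g$, express it through a $(1,0)$-form reduction that exists because $(M,J)$ admits some K\"ahler metric, and then show that the soliton hypothesis together with a parabolic differential inequality forces the ``non-K\"ahler'' obstruction to vanish via a strong maximum principle argument.

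Concretely, since $(M,J)$ admits a K\"ahler metric, a standard Hodge-theoretic argument combined with Demailly-Paun (cf.\ \cite{ASNDGKCY} Proposition 6.1) produces a K\"ahler form $\til{\gw}$ and a $(1,0)$-form $\ga$ with $\gw = \til{\gw} + \del \bga + \delb \ga$. Fix an auxiliary Hermitian metric $h$ and, in the steady case, take $\til{\gw}_t = \til{\gw} - t \rho_C(h)$. Using the formalism of \cite{StreetsPCFBI} one then constructs a one-parameter family $\ga_t$ solving an appropriate parabolic equation so that $\gw_t = \til{\gw}_t + \del \bga_t + \delb \ga_t$ is the pluriclosed flow starting from $\gw$. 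The crucial output of that construction, since $\del \til{\gw}_t = 0$, is a differential inequality
$$
\left( \dt - \gD^C_{g_t} \right) \brs{\del \ga}^2 \leq - \brs{T_{g_t}}^2,
$$
for the Chern torsion $T_{g_t}$ of the evolving metric. Because $(g,f)$ is a soliton, $\gw_t$ evolves purely by diffeomorphism, so there exists a vector field $X$ with $\dt \brs{\del \ga}^2 = X \brs{\del \ga}^2$; plugging this into the displayed inequality shows that $\brs{\del \ga}^2$ is a subsolution of a strictly elliptic equation with vanishing zeroth-order term. The strong maximum principle then forces $\brs{\del \ga}^2$ to be constant, whereupon the inequality gives $T_{g_t} \equiv 0$, so $g$ is K\"ahler and hence a K\"ahler-Ricci soliton.

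The shrinking case is handled analogously, this time with background $\til{\gw}_t = \til{\gw} - t(\rho_C(h) - \til{\gw})$ and a reduced equation for $\ga_t$ carrying an extra linear term $+ \ga$, so that $\gw_t$ solves normalized pluriclosed flow. The key point is that the evolution inequality for $\brs{\del \ga}^2$ picks up no additional normalization terms, which is immediate since $\brs{\del \ga_t}^2_{g_t}$ is scale-invariant; the maximum principle argument then goes through unchanged. The main obstacle in this plan is really the construction of the $(1,0)$-form reduction and the derivation of the parabolic differential inequality for $\brs{\del \ga}^2$ with the nonpositive right-hand side $-\brs{T_{g_t}}^2$, both of which require delicate computations involving the Bismut and Chern connections; once those technical inputs from \cite{StreetsPCFBI} are in hand, the rigidity on a soliton follows almost immediately.
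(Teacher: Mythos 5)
Your proposal follows the paper's own argument essentially verbatim: the same Hodge-theoretic/Demailly-Paun $(1,0)$-form reduction with background $\til{\gw}_t = \til{\gw} - t\rho_C(h)$ (respectively the normalized variant in the shrinking case), the same parabolic inequality $\left(\dt - \gD^C_{g_t}\right)\brs{\del\ga}^2 \leq -\brs{T_{g_t}}^2$ from \cite{StreetsPCFBI}, and the same soliton-plus-strong-maximum-principle conclusion that the torsion vanishes. It is correct and there is nothing to add.
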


\begin{prop} \label{t:solclass} The following hold:
\begin{enumerate}
\item A shrinking pluriclosed soliton $(M^4, J, g, f)$ is a shrinking 
K\"ahler-Ricci soliton.
\item A steady pluriclosed soliton $(M^4, J, g, f)$ satisfies either
\begin{enumerate}
 \item $(M^4, J, g)$ is Calabi-Yau, and $f \equiv \mbox{const}$, or
 \item $(M^4, J)$ is biholomorphic to a Hopf surface.
\end{enumerate}
\end{enumerate}

\begin{proof} If the underlying surface $(M^4, J)$ is K\"ahler, Proposition 
(\ref{l:solKrigidity}) implies that $(g, f)$ is a K\"ahler-Ricci soliton (of 
course Calabi-Yau in the steady case), fitting into cases (1) and (2a) above.  
Thus we may assume that $(M^4, J)$ is non-K\"ahler.  We begin with the 
fundamental identity
\begin{align*}
 \tr_{\gw} \rho_B^{1,1} =&\ \tr_g \left( \Rc - \tfrac{1}{4} H^2 + 
L_{\theta^{\sharp}} g 
\right)\\
 =&\ \tr_g \left( L_{\theta^{\sharp} + \N f} g  + \gl g \right)\\
 =&\ \gD f + 4 \gl,
\end{align*}
since the Lee form is divergence free.
But also by (\cite{IvanovPapa} Proposition 3.3)
\begin{align*}
 \tr_{\gw} \rho_B^{1,1} = s_C - \brs{T}^2,
\end{align*}
where $s_C$ is the scalar curvature of the Chern connection.
Thus
\begin{align} \label{f:solclass10}
c_1 \cdot [\gw] =  \int_M s_C \gw^2 = \int_M \left(\brs{T}^2 + \gD f 
+ 4 \gl \right) dV_g = \int_M \brs{T}^2 dV_g + 4 \gl \Vol(g) > 0,
\end{align}
where the inequality is strict since $\gl \geq 0$ and the metric is not 
K\"ahler.  It follows from Gauduchon's plurigenera theorem 
(\cite{GauduchonFibres}, \cite{Gauduchon1form}) that $p_m = 0$ for all $m > 0$. 
 Hence $\Kod (M) = - \infty$, and $(M, J)$ is a Class VII surface.

Next we observe that,
since the pluriclosed flow with initial condition $g$ evolves by diffeomorphism 
pullback by a family of biholomorphisms and scaling by $\gl$, using Proposition 
\ref{p:solitonholo} one 
obtains, using that $s_C = \tr_{\gw} \rho_C$, where $\rho_C$ is the Chern-Ricci 
curvature,
\begin{align} \label{f:rigidity10}
 -\gl =&\ \dt \int_M s_C \gw^2 = \dt \int_M \rho_C \wedge \gw = - c_1^2.
\end{align}
Now note that it follows from (\cite{DlousskyNAC} Theorem 1.8) that for Class 
VII surfaces with $b_2 > 0$, one has $c_1^2 = - b_2$.  Since $\gl \geq 0$ this 
violates (\ref{f:rigidity10}), and hence we conclude $b_2 = 0$.  This implies 
moreover that $(M^4, J)$ is minimal, and then it follows that $c_1^2 \leq 0$ 
(cf. \cite{BHPV} \S VI).  Thus (\ref{f:rigidity10}) now forces both $\gl = 0$ 
and $c_1^2 = 0$.  Note that this has now ruled out the possibility of a 
shrinking soliton on a non-K\"ahler surface, finishing the proof of claim (1).  
To determine the biholomorphism type in the steady case, first note that by 
the classification of Class VII surfaces with $b_2 = 0$ 
(\cite{LiYauBog, TelemanBog}), $(M, J)$ is either a Hopf surface or an Inoue 
surface.  
It was shown 
in (\cite{Telemancone} Remark 4.2) that $c_1 \cdot [\gw] < 0$ for all metrics 
on 
Inoue surfaces, and so these cannot occur by (\ref{f:solclass10}).  Hence 
$(M^4, J)$ is biholomorphic to a minimal Hopf surface, as claimed.
\end{proof}
\end{prop}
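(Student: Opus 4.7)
The plan is to split immediately on whether $(M^4,J)$ admits a K\"ahler structure. In the K\"ahler case Proposition \ref{l:solKrigidity} reduces everything to the theory of K\"ahler-Ricci solitons: shrinking gives case (1) directly, and a steady K\"ahler-Ricci soliton on a compact manifold is Ricci-flat by the standard argument of Ivey/Perelman, hence Calabi-Yau on a surface with $f \equiv \mbox{const}$, giving case (2a). So the real content is the non-K\"ahler case, which I will aim to force to be steady and to live on a Hopf surface.

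Assume now that $(M^4, J)$ is non-K\"ahler. The first step is to extract a positivity statement for $c_1$. Using Proposition \ref{p:BismutGR} and tracing the soliton equation,
\begin{align*}
\tr_\gw \rho_B^{1,1} = \tr_g\bigl(\N^2 f + \gl g + \tfrac{1}{2} L_{\theta^\sharp} g\bigr) = \gD f + 4\gl,
\end{align*}
since $\theta^\sharp$ is divergence-free for Gauduchon metrics in complex dimension $2$. Combining with the Ivanov-Papadopoulos identity $\tr_\gw \rho_B^{1,1} = s_C - \brs{T}^2$ and integrating yields
\begin{align*}
c_1 \cdot [\gw] = \int_M \brs{T}^2\, dV_g + 4 \gl \Vol(g) > 0,
\end{align*}
strictly, because $\gl \geq 0$ and $T \not\equiv 0$. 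Gauduchon's plurigenera theorem then forces $p_m = 0$ for all $m>0$, so $\Kod(M) = -\infty$ and $(M,J)$ is a class VII surface.

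The next step is to extract an equation relating $\gl$ to $c_1^2$. Since the soliton evolves by a family of biholomorphisms and a scaling by $\gl$ (using the holomorphicity of $\theta^\sharp + \N f$ from Proposition \ref{p:solitonholo}), differentiating the topological integral $\int_M s_C\, \gw^2 = c_1 \cdot [\gw]$ in $t$ along the flow produces the relation $-\gl = -c_1^2$, i.e.\ $c_1^2 = \gl \geq 0$. This is where the deep Kodaira input enters: for class VII surfaces with $b_2 > 0$, Dloussky's theorem forces $c_1^2 = -b_2 < 0$, contradicting $c_1^2 \geq 0$. Hence $b_2 = 0$, which forces minimality and $c_1^2 \leq 0$; combined with $c_1^2 = \gl \geq 0$ this pins down $\gl = 0$ and $c_1^2 = 0$. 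This already rules out shrinking solitons in the non-K\"ahler case, completing (1).

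Finally, for the steady non-K\"ahler case we invoke the Li-Yau-Teleman-Bogomolov classification: a minimal class VII surface with $b_2 = 0$ is biholomorphic either to a Hopf surface or to an Inoue surface. Inoue surfaces are excluded because Teleman's bound $c_1 \cdot [\gw] < 0$ for any Hermitian metric on an Inoue surface directly contradicts the strict positivity of $c_1 \cdot [\gw]$ established above. So $(M^4, J)$ must be a Hopf surface, which is case (2b). The main obstacle I expect is justifying the soliton evolution identity $\gl = c_1^2$ rigorously--one has to be careful that the biholomorphism/scaling description of the flow lets one differentiate $c_1 \cdot [\gw]$ despite $\gw$ not being closed--and then correctly stitching together the Kodaira classification inputs (Gauduchon plurigenera, Dloussky, Li-Yau/Teleman/Bogomolov, Teleman on Inoue) so that every sign and inequality in the chain of implications is strict in the right direction.
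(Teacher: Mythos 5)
Your proposal is correct and follows essentially the same route as the paper: the K\"ahler case is dispatched by Proposition \ref{l:solKrigidity}, the trace identity from Proposition \ref{p:BismutGR} plus the Ivanov--Papadopoulos formula gives $c_1 \cdot [\gw] > 0$, Gauduchon's plurigenera theorem places $(M,J)$ in class VII, the evolution identity relating $\gl$ and $c_1^2$ (via the holomorphy from Proposition \ref{p:solitonholo}) combined with Dloussky forces $b_2 = 0$, $\gl = 0$, $c_1^2 = 0$, and the Li--Yau/Teleman--Bogomolov classification together with Teleman's Inoue estimate leaves only Hopf surfaces. The one step you flag as delicate, differentiating $\int_M \rho_C \wedge \gw$ along the flow, is treated at the same level of detail in the paper itself, so there is no gap relative to the original argument.
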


\section{Sasakian and Hermitian geometry} \label{s:SasakiHermitian}

We now begin our construction of steady solitons on class $1$ Hopf surfaces.  
Thus construction builds upon a fundamental observation of 
Vaisman \cite{VaismanLCK} which exhibits a link between 
three-dimensional Sasakian structures and locally conformally K\"ahler metrics 
with parallel Lee form.  In this section we will briefly recall fundamental 
notions of Sasaki geometry, recall
the basic elements of Vaisman's construction, and discuss the relationship 
between the underlying Sasaki structure and the resulting complex surface.

\subsection{Basic definitions} 

\begin{defn} \label{d:sasaki} A Sasakian manifold consists of a triple $(M, g, 
Z)$ where $g$ is 
a Riemannian metric and $Z$ is a unit length Killing field with respect to $g$, 
such that $I \in \End(TM)$ defined via $I(X) := \N_X Z$ satisfies
\begin{align} \label{nablaPhi}
 (\N_X I)(Y) = g(Z,Y)X - g(X,Y)Z.
\end{align}
Associated to a Sasakian manifold we define $\eta = Z^{\flat}$, which is easily 
seen to satisfy.
\begin{align*}
\eta(Z) = 1, \qquad d \eta(Z, X) = 0.
\end{align*}
The kernel of $\eta$ is the \emph{transverse distribution}, which we will 
denote by $Q$, which comes equipped with a projection map
\begin{align} \label{f:pidef}
 \pi^T(X) := X - \eta(X) Z.
\end{align}
\end{defn}

\begin{prop} \label{p:sasakiprops}Let $(M, g, Z, \eta, I)$ be a
Sasakian manifold.  Then
\begin{enumerate}
 \item $I^2 Y = - Y + \eta(Y) Z$,
 \item $I Z = 0$, \qquad $\eta(I Y) = 0$,
 \item $g(X, I Y) - g(I X, Y) = 0$, \qquad $g(I Y, I X) = g(Y,X) -
\eta(Y) \eta(X),$
 \item $d \eta (Y,X) = 2 g(I Y,X)$,
 \item $L_{Z} I \equiv 0$.
\end{enumerate}
\begin{proof} We include some elementary derivations here for convenience and 
to fix conventions.
 
To prove (4) we compute using basic properties of the Levi-Civita connection 
and property (3),
\begin{align*}
 d \eta(X,Y) =&\ X \eta(Y) - Y \eta(X) - \eta[X,Y]\\
 =&\ X g(Z,Y) - Y g(Z,X) - g(Z,[X,Y])\\
 =&\ g( \N_X Z, Y) + g(Z, \N_X Y) - g(\N_Y Z, X) - g(Z,\N_Y X) - g(Z,[X,Y])\\
 =&\ g(IX, Y) - g(IY,X)\\
 =&\ 2 g(IX,Y).
\end{align*}

 To prove (5) we compute
 \begin{align*}
  (L_Z I)(X) =&\ L_Z (IX) - I(L_Z X)\\
  =&\ \N_Z (IX) - \N_{IX} Z - I([Z,X])\\
  =&\ (\N_Z I) X + I (\N_Z X) - I^2 X - I([Z,X])\\
  =&\ g(Z,X)Z - g(Z,X)Z + I(\N_X Z) + X\\
  =&\ 0,
 \end{align*}
as claimed.
\end{proof}
\end{prop}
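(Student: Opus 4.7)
The plan is to bootstrap parts (1)--(3) from the defining identity (\ref{nablaPhi}) and the Killing property of $Z$, in the order (2) $\Rightarrow$ (1) $\Rightarrow$ (3); parts (4) and (5) then reduce to the direct computations sketched in the excerpt. There is no serious conceptual obstacle here; the proposition is essentially bookkeeping, and the only real care needed is to sequence the deductions correctly so that each step has the facts it requires, and to keep the sign conventions of the Killing skew-symmetry consistent throughout.

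First I would establish (2). Since $Z$ is a unit length Killing field, the endomorphism $I = \N Z$ is pointwise skew-adjoint with respect to $g$, so
\begin{align*}
g(IZ, W) \;=\; g(\N_Z Z, W) \;=\; -g(Z, \N_W Z) \;=\; -\tfrac{1}{2} W \brs{Z}^2 \;=\; 0
\end{align*}
for every vector field $W$, forcing $IZ = 0$. The dual identity $\eta(IY) = g(Z, \N_Y Z) = \tfrac{1}{2} Y \brs{Z}^2 = 0$ follows from exactly the same observation.

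Next, (1) falls out by differentiating the identity $IZ \equiv 0$ just obtained. Expanding,
\begin{align*}
0 \;=\; \N_X(IZ) \;=\; (\N_X I)(Z) + I(\N_X Z) \;=\; (\N_X I)(Z) + I^2 X,
\end{align*}
while specializing (\ref{nablaPhi}) to $Y = Z$ gives $(\N_X I)(Z) = g(Z,Z) X - g(X,Z) Z = X - \eta(X) Z$; combining these two yields $I^2 X = -X + \eta(X) Z$ at once. Then (3) is a repackaging of the Killing skew-symmetry $g(IX, Y) + g(X, IY) = 0$ together with the computation $g(IY, IX) = -g(Y, I^2 X) = g(Y, X) - \eta(X) \eta(Y)$, applying (1) in the last step.

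Finally, for (4) and (5) I would argue exactly as in the excerpt: apply the Cartan formula $d\eta(X, Y) = X \eta(Y) - Y \eta(X) - \eta([X, Y])$, rewrite the commutators using $\N$, and collapse via the Killing skew-symmetry to land on $2 g(IX, Y)$; and expand $(L_Z I)(X) = (\N_Z I)(X) + I(\N_Z X) - \N_{IX} Z - I([Z, X])$, then use (\ref{nablaPhi}) specialized to $X = Z$ (which forces $\N_Z I \equiv 0$) together with $\N_{IX} Z = I^2 X$ and $\N_X Z = IX$ to witness the total cancellation.
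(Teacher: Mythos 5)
Your proposal is correct, and on the two items the paper actually proves, (4) and (5), it follows the same computations: Cartan's formula for $d\eta$ combined with the skew-adjointness of $\nabla Z$ for (4), and the expansion of $(L_Z I)(X)$ through the Levi-Civita connection together with $\nabla_Z I \equiv 0$ (which is (\ref{nablaPhi}) with $X = Z$) for (5). The added value of your write-up is that you supply items (1)--(3), which the paper leaves as elementary, and you order them so that each step has what it needs: $IZ = 0$ and $\eta \circ I = 0$ from the unit length and Killing conditions, then (1) by differentiating $IZ \equiv 0$ and invoking (\ref{nablaPhi}) with $Y = Z$, then (3) from skewness and (1); all of these computations check out. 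One point worth flagging: the first identity you establish in (3) is the skew-symmetry $g(IX,Y) + g(X,IY) = 0$, whereas item (3) as printed reads $g(X,IY) - g(IX,Y) = 0$. Your version is the correct one: $I = \nabla Z$ is skew-adjoint precisely because $Z$ is Killing, this is exactly what the paper's own proof of (4) uses in passing from $g(IX,Y) - g(IY,X)$ to $2g(IX,Y)$, and it is also what the second identity of (3) requires (with the printed sign one would instead get $g(IY,IX) = -g(Y,X) + \eta(X)\eta(Y)$). So the minus sign in the printed item (3) is evidently a typo, and your proof, not the printed statement, reflects the convention actually in force in the rest of the paper.
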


\begin{defn} \label{d:transverse} Let $(M, g, Z)$ be a Sasakian manifold.  The
\emph{transverse metric} is defined by
\begin{align*}
 g^T(X,Y) = \tfrac{1}{2} d \eta(X, I Y).
\end{align*}
Similarly, define the \emph{transverse K\"ahler form} by 
\begin{align*}
 \gw^T(X,Y) = - \tfrac{1}{2} d \eta(X,Y).
\end{align*}
The reason for the factor $\tfrac{1}{2}$ in both formulas is explained by 
Proposition \ref{p:sasakiprops} (4), whereas the sign above is in keeping with 
the convention that the metric and K\"ahler form of a Hermitian structure 
satisfy $\gw(X,Y) = g(X,JY)$.  The terminology ``transverse'' arises from the 
fact that $g^T$ 
defines a positive definite metric on the distribution orthogonal to $Z$, 
whereas $g^T(Z, X) = 0$. 
\end{defn}

\subsection{The associated Hermitian cylinder}  
\label{s:hermcyl}

\begin{defn} \label{d:cylinderdef} Given $(N, g, Z)$ a Sasakian 
three-manifold, there 
is an associated Hermitian
manifold $(M, h, J)$ defined as follows.  Let
$M \cong N \times \mathbb R$, where we parameterize $\mathbb R$ with coordinate 
$t$, and set $W = \frac{\del}{\del t}$.  Choose the metric $h = \pi_1^* g + 
\pi_2^* dt^2$, and define $J$ via (recall $Q$ denotes the transverse 
distribution)
\begin{align*}
 J_{|Q} = I_{|Q}, \qquad J(Z) = W.
\end{align*}
We will refer to this Hermitian cylinder as a \emph{Sasaki-type complex 
surface}.  Moreover, we will refer to a tensor field $\mu$ on such a surface 
as \emph{invariant} if
\begin{align*}
L_{Z} \mu = L_W \mu = 0.
\end{align*}
\end{defn}

\begin{prop} \label{p:cylinderdef} Given a Sasakian three-manifold $(N,g,Z)$ 
the triple $(M, h, J)$
of Definition \ref{d:cylinderdef} satisfies:
\begin{enumerate}
\item The transverse projection map is holomorphic, i.e.
\begin{align*}
 \pi^T_* J X = I \pi_* X.
\end{align*}
\item $(M, h, J)$ is indeed Hermitian
\item The K\"ahler form associated to $(h, J)$ satisfies
\begin{align*}
 \gw_h = -\tfrac{1}{2} d \eta - dt \wedge \eta.
\end{align*}
\item The tensors $J$, $h$ and $\gw_h$, are all invariant.
\end{enumerate}
\begin{proof} Let $Q$ denote the transverse distribution.  As we can decompose 
$TM = Q + \IP{V} + \IP{W}$, given $X \in TM$ 
we may express $X = X_Q + \ga V + \gb W$, where $\ga,\gb \in \mathbb R$ and 
$X_Q \in Q$.  Then we compute
\begin{align*}
\pi_*^T JX = \pi_*^T (J (X_Q + \ga V + \gb W)) = \pi_*^T (I X_Q + \ga W - \gb 
V) = I X_Q = I \pi_*^T X,
\end{align*}
as required for property (1).

To check property (2), we first derive a formula for the metric $h$.  Combining 
properties (3) and (4) of Proposition \ref{p:sasakiprops} one obtains
\begin{align*}
 g = \tfrac{1}{2} d \eta \left(\cdot, I \cdot \right) + \eta \otimes \eta.
\end{align*}
Thus by definition, suppressing various obvious actions of $\pi_i^*$ we have
\begin{align} \label{f:hermcylinder10}
 h = \tfrac{1}{2} d \eta \left( \cdot, I \cdot \right) + \eta \otimes \eta + dt
\otimes dt.
\end{align}
So, it follows from the definition of $J$ that $dt \circ J = - \eta, \eta \circ 
J 
= dt$.  Also, by Proposition \ref{p:sasakiprops} items (3) and (4) it follows 
that $d \eta \in \Lambda^{1,1}(Q^*)$.  It follows easily that $h$ is Hermitian.

To show property (3), we pair equation (\ref{f:hermcylinder10}) with $J$ to 
obtain for arbitrary $X,Y \in TM$,
\begin{gather} \label{f:hermcylinder20}
\begin{split}
\gw_h(X ,Y) =&\ h(X, J Y)\\
=&\ \tfrac{1}{2} d \eta \left( X, I J Y \right) + \eta(X) \otimes \eta(J Y) + 
dt(
X) \otimes dt(J Y)\\
=&\ - \tfrac{1}{2} d \eta \left( X, Y \right) + \eta(X) \otimes dt (Y) - dt(X)
\otimes \eta(Y)\\
=&\ -\tfrac{1}{2} d \eta (X,Y) - dt \wedge \eta (X,Y),
\end{split}
\end{gather}
as claimed.  

Finally, to check the invariance by $Z$ we first recall from Definition 
\ref{d:sasaki} and Proposition \ref{p:sasakiprops} that 
$\eta, d \eta$, and $I$ are all $Z$ invariant on $N$, and thus also on $M$.  It 
is also apparent by 
construction that $dt$ is $Z$-invariant, and hence by (\ref{f:hermcylinder10}) 
and 
(\ref{f:hermcylinder20}) it follows immediately that $h$ and $\gw_h$ are 
$Z$-invariant.  Since $J = g^{-1} \gw_h$ it follows that $J$ is $Z$-invariant 
as well.  Invariance by $W$ is immediate.
\end{proof}
\end{prop}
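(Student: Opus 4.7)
The plan is to exploit the pointwise splitting $TM = Q \oplus \mathbb R Z \oplus \mathbb R W$, where $Q$ is the transverse distribution of $N$ lifted to $M$, and verify each of the four claims by direct computation from the definitions together with the structural identities of Proposition \ref{p:sasakiprops}. A preliminary observation that organizes the bookkeeping is that $JZ = W$ combined with $J^2 = -\Id$ forces $JW = -Z$, so $J$ acts on $\mathbb R Z \oplus \mathbb R W$ as an orthogonal complex structure with respect to $\eta \otimes \eta + dt \otimes dt$. For claim (1), decompose $X = X_Q + \alpha Z + \beta W$; then $JX = I X_Q + \alpha W - \beta Z$ by the definition of $J$, and applying $\pi^T$ annihilates the $Z$ and $W$ components, leaving $\pi^T_* JX = I X_Q = I \pi^T_* X$.

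For claim (2), my first step is to package $g$ on $N$ in closed form by combining items (3) and (4) of Proposition \ref{p:sasakiprops}, obtaining
\begin{align*}
g = \tfrac{1}{2} d\eta(\cdot, I\cdot) + \eta \otimes \eta,
\end{align*}
which on $M$ extends to $h = \tfrac{1}{2} d\eta(\cdot, I\cdot) + \eta \otimes \eta + dt \otimes dt$. Checking $J^2 = -\Id$ is immediate by case analysis over the three summands, using item (1) of Proposition \ref{p:sasakiprops} on $Q$. For Hermitian compatibility, I note that $d\eta \in \Lambda^{1,1}(Q^*)$ with respect to $I$ (again by items (3)--(4) of Proposition \ref{p:sasakiprops}), while $J$ swaps $\eta$ and $dt$ up to sign; the three summands of $h$ are therefore each $J$-invariant.

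Claim (3) then follows by pairing the explicit formula for $h$ with $J$ on the second argument: the transverse piece contributes $\tfrac{1}{2} d\eta(X, IJY) = -\tfrac{1}{2} d\eta(X,Y)$, using that $IJ = -\Id$ on $Q$ and that $d\eta$ vanishes when either argument is $Z$ or $W$, while the remaining mixed terms collapse to $\eta(X) dt(Y) - dt(X) \eta(Y) = -(dt \wedge \eta)(X,Y)$. Claim (4) is essentially free from this setup: $\eta$, $d\eta$, and $I$ are $Z$-invariant on $N$ as a standard feature of Sasakian structures (item (5) of Proposition \ref{p:sasakiprops} records this for $I$), while $dt$ is trivially invariant under both $Z$ and $W$. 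The closed-form expressions for $h$ and $\gw_h$ then yield their invariance, and $J = h^{-1} \gw_h$ inherits invariance as well.

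The main obstacle, such as it is, lies in part (2): one must track carefully how $J$ interacts with all three summands of $h$ simultaneously and verify that the transverse piece $\tfrac{1}{2} d\eta(\cdot, I\cdot)$ is genuinely $J$-invariant on all of $TM$ rather than merely $I$-invariant on $Q$. Beyond this, the proposition reduces to a mechanical assembly of the definitions and of Proposition \ref{p:sasakiprops}.
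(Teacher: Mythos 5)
Your proposal is correct and follows essentially the same route as the paper's proof: the same splitting $TM = Q \oplus \mathbb{R}Z \oplus \mathbb{R}W$ for claim (1), the same closed-form expression $h = \tfrac{1}{2} d\eta(\cdot, I\cdot) + \eta \otimes \eta + dt \otimes dt$ obtained from Proposition \ref{p:sasakiprops} (3)--(4) for claims (2) and (3), and the same invariance bookkeeping for claim (4). The only additions are cosmetic --- the explicit remark that $JW = -Z$ and the case-by-case verification of $J^2 = -\mathrm{Id}$, which the paper leaves implicit.
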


\subsection{Induced surfaces} \label{s:inducedsurfaces}

The cylinder construction of  \S \ref{s:hermcyl} can yield many different 
complex surfaces, but we are here only interested in Hopf surfaces.  To recall, 
a Hopf surface is a compact complex surface whose universal covering space is
$\mathbb C^2 - \{(0,0)\}$.  A Hopf surface is \emph{primary} if $\pi_1(M) =
\mathbb Z$, and is otherwise \emph{secondary}.  For primary Hopf surfaces, 
Kodaira \cite{Kod2, Kod3} shows that the fundamental group is generated by a 
map $\gg$ defined by
\begin{align} \label{f:Hopfcontraction}
(z_1, z_2) \to (\ga z_1 , \gb z_2 + \gl z_1^m),
\end{align}
where $\ga, \gb, \gl$ are complex numbers satisfying $0 < \brs{\ga} \leq
\brs{\gb} < 1$ and
\begin{align*}
(\ga - \gb^m) \gl = 0.
\end{align*}
We say that a Hopf surface is of \emph{class 1} if $\gl = 0$, otherwise, it is 
\emph{class 0}.  Furthermore, we say the surface is \emph{diagonal} if $\gl = 
0$ and $\ga = \gb$.

We require explicit information on the construction of LCK metrics on Hopf 
surfaces, thus we briefly recall some 
elements of (\cite{Belgunmetric} \S 5), where this is carried out.  First we 
determine the holomorphic 
vector fields on Hopf surfaces.  To begin, express the generic holomorphic 
vector field on $\mathbb 
C^2 - \{(0,0)\}$ as $W = A(z_1,z_2) \del_{z_1} + B(z_1,z_2) \del_{z_2}$, where 
$A$ and $B$ are holomorphic functions.  By Hartogs' 
Theorem $A$ and $B$ extend to $\mathbb C^2$, and can be expressed as convergent 
power series.  To see which vector fields descend to the quotient, we must 
check invariance under the contraction (\ref{f:Hopfcontraction}).  For 
class 1 Hopf surfaces, i.e. $\gl = 0$, power series 
computations show that the general form of $A$ and $B$ for a 
$\gg$-invariant vector field is
\begin{gather} \label{f:vectfields}
\begin{split}
A(z_1,z_2) = a z_1 + b z_2, \ B(z_1,z_2) = c z_1 + d z_2, \quad a,b,c,d \in 
\mathbb C, \quad \ga = \gb\\
A(z_1,z_2) = a z_1 + c z_2^m, \ B(z_1,z_2) = b z_2, \quad a,b,c \in \mathbb 
C,\quad \ga^m = \gb\\
A(z_1,z_2) = a z_1, \ B(z_1,z_2) = b z_2, \quad a,b \in \mathbb C, \quad \ga 
\neq \gb\\
\end{split}
\end{gather}
As explained in (\cite{Belgunmetric} Proposition 7), the vector field $W$ 
playing 
the role of the lift of the parallel Lee vector field must satisfy the 
condition that $J W$ has relatively compact orbits in $\mathbb C^2 - 
\{(0,0)\}$.  By analyzing the orbits of $(1,0)$ and $(0,1)$ and comparing 
against (\ref{f:vectfields}), Belgun shows (\cite{Belgunmetric} Proposition 8) 
that the relevant vector fields are
\begin{align*}
W =&\ \Real \left\{ \ln \brs{\ga} z_1 \del_{z_1} + \ln \brs{\gb} z_2 \del_{z_2} 
\right\}\\
=&\ \tfrac{1}{2} \ln \brs{\ga} \left( x_1 \del_{x_1} + y_1 \del_{y_1} \right) + 
\tfrac{1}{2} \ln \brs{\gb}  \left( x_2 \del_{x_2} + y_2 \del_{y_2} \right).
\end{align*}
Of course then one has
\begin{align*}
Z = - J W = \tfrac{1}{2} \ln \brs{\ga} \left( y_1 \del_{x_1} - x_1 \del_{y_1} 
\right) + \tfrac{1}{2} \ln \brs{\gb} \left( y_2 \del_{x_2} - x_2 \del_{y_2} 
\right).
\end{align*}
As this $Z$ is the Reeb vector field of the associated Sasakian structure, we 
can derive the associated contact form.  First, for notational 
simplicity let
\begin{align*}
a = \tfrac{1}{2} \ln \brs{\ga}, \qquad b = \tfrac{1}{2} \ln \brs{\gb}, \qquad 
\gs = a \brs{z_1}^2 + b \brs{z_2}^2.
\end{align*}
It follows from elementary calculations that the associated contact form is
\begin{align*}
\eta_{\ga,\gb} =&\ \frac{1}{\gs} \left(x_1 dy_1 - y_1 dx_1 + x_2 dy_2 - y_2 
dx_2 \right).
\end{align*}
In our construction we also require a certain basis for the contact 
distribution.  To that end set
\begin{gather} \label{f:basisdef}
\begin{split}
E_1 =&\ \brs{z_2}^2 \left( y_1 \del_{x_1} - x_1 \del_{y_1} \right) - 
\brs{z_1}^2 \left( y_2 \del_{x_2} - x_2 \del_{y_2} \right)\\
E_2 =&\ J E_1 = \brs{z_2}^2 \left( x_1 \del_{x_1} + y_1 \del_{y_1} \right) - 
\brs{z_1}^2 \left( x_2 \del_{x_2} + y_2 \del_{y_2} \right).
\end{split}
\end{gather}
Again straightforward calculations yield
\begin{align*}
E_i \in \ker \eta_{\ga,\gb}, \qquad [E_i,Z] = 0.
\end{align*}

Furthermore, there is a natural dichotomy in this construction which will 
inform our construction.  If the orbits of the Reeb vector field of a Sasaki 
structure are all compact, then it generates a circle action.  If this action 
is 
free, the Sasakian structure is called \emph{regular}, and is called 
\emph{quasi-regular} if the action is locally free.  In case there is a 
noncompact orbit of the Reeb vector field, the Sasaki structure is called 
\emph{irregular}.  In our setting the only regular Sasaki structure has Reeb 
vector field tangent to the standard Hopf action on $S^3$.  The corresponding 
complex surface is the primary diagonal Hopf surface.  Furthermore in this 
setting the quotient space for the circle action is a smooth manifold, 
specifically $\mathbb {CP}^1$.  Quasi-regular Sasaki structures arise when one 
has $\ga^m = \gb^n$.  Here the corresponding complex surfaces are known as 
elliptic Hopf surfaces, and the quotient space has the structure of a bad 
orbifold, in particular is biholomorphic to one of the classic ``teardrop'' or 
``football'' orbifolds (see Figure \ref{f:orbpic}).
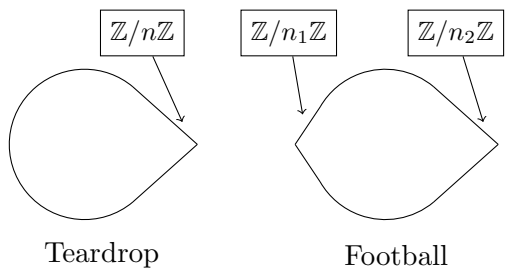
\begin{figure}[ht]
\begin{tikzpicture}

\draw (0,0) ++(-50:1) arc (-50:-310:1);
\draw (0,0) ++(-50:1) -> (1.5,0);
\draw (0,0) ++(-310:1) -> (1.5,0);

\draw (4,0) ++(50:1) arc (50:150:1);
\draw (4,0) ++(-50:1) arc (-50:-150:1);
\draw (4,0) ++(50:1) -> (5.5,0);
\draw (4,0) ++(-50:1) -> (5.5,0);
\draw (4,0) ++(-150:1) -> (2.8,0);
\draw (4,0) ++(150:1) -> (2.8,0);

\node at (0.2286,-1.4709) {Teardrop};
\node at (4.15,-1.4709) {Football};

\node [draw] (v3) at (0.7683,1.4825) {\small{$\mathbb Z/n \mathbb Z$}};
\draw [->] (v3) edge (1.3,0.3);

\node [draw] (v4) at (2.7156,1.4825) {\small{$\mathbb Z/n_1 \mathbb Z$}};
\draw [->] (v4) edge (2.9,0.4);

\node [draw] (v5) at (4.937,1.4825) {\small{$\mathbb Z/n_2 \mathbb Z$}};
\draw [->] (v5) edge (5.3,0.3);

\end{tikzpicture}
\caption{Bad orbifolds}
\label{f:orbpic}
\end{figure}
For elliptic Hopf surfaces we can solve for steady solitons on this base 
orbifold, and aspects of the construction have clear geometric meaning on this 
space.  In the case of Hopf surfaces corresponding to irregular Sasaki 
structures one is forced to work entirely on the total space, but analogies to 
the elliptic case persist in this setting.  As a final remark here we emphasize 
that Hopf surfaces of class $0$ do 
not appear via this construction, as exhibited in (\cite{Belgunmetric} 
Proposition 10).  It remains an open question whether class $0$ Hopf surfaces 
admit steady pluriclosed solitons or not, arising from some other construction.

\section{Invariant geometry on Hermitian cylinders} \label{s:invgeom}

While we have focused so far on the link between Sasakian and Hermitian 
geometry, we should not expect the pluriclosed flow equation to preserve any 
underlying connection to Sasakian geometry, due to the extra integrability 
conditions.  The curvature calculations to follow clarify this issue.  However, 
by standard arguments we obtain that invariance by vector fields is 
preserved by the flow (see Proposition \ref{p:invpreserved}).  In this section 
we thus investigate Sasaki-type Hermitian 
cylinders equipped with metrics which are invariant, but not necessarily coming 
from the Sasakian construction of \S \ref{s:SasakiHermitian}.  As we will see, 
the local geometry of invariant 
metrics is identical to that of a Kaluza-Klein type metric on a principal 
bundle.  Of course there is not necessarily a smooth quotient 
space for the action, and so we must work only on the total space.  

Given this one might wonder why we bother to pass through Sasaki geometry in 
the first place.  The reason is that one cannot apply standard averaging 
arguments to obtain an invariant metric in general for all of the complex 
surfaces under consideration, especially those corresponding to an irregular 
Sasaki structure, since the vector fields are not tangent to the action by a 
compact Lie group.  In other words, the construction of any invariant metric at 
all necessitates the construction via Sasakian geometry.  We work within the 
class of invariant metrics to produce a soliton.

\subsection{Characterization of invariant metrics} \label{ss:invchar}

\begin{defn} \label{Hcl} Given a Sasaki-type complex surface $(M^4, J)$, 
Consider the Lie algebra $\mathfrak t^2$ with basis $\{\xi_1,\xi_2\}$ and 
complex structure $J_{\mathfrak t^2} \xi_1 = \xi_2$.
We say that a form 
$\mu \in \Lambda^1 \otimes \mathfrak t^2$ is \emph{Hermitian connection} 
if, 
expressing
\begin{align*}
\mu(X) = \mu^Z(X) \xi_1 + \mu^W(X) \xi_2,
\end{align*}
one has
\begin{enumerate}
\item $\mu^Z(Z) \equiv \mu^W(W) \equiv 1, \qquad \mu^Z(W) \equiv \mu^W(Z) 
\equiv 0$,
\item $\mu(JX) = J_{\mathfrak t^2} \mu(X)$,
\item $L_{Z} \mu \equiv L_{T} \mu \equiv 0$.
\end{enumerate}
We will refer to the subbundle $\VV = \IP{V,W} \subset TM$ as the 
\emph{vertical 
space}.  Given a Hermitian connection form $\mu$ there is an associated 
complementary \emph{horizontal space} defined by $\HH = \ker \mu$.  Observe 
furthermore that condition (2), expanded out, says
\begin{align*}
\mu^Z(JX) \xi_1 + \mu^W(JX) \xi_2 = \mu(JX) = J_{\mathfrak t^2} \mu(X) = 
J_{\mathfrak t^2}( 
\mu^Z(X) \xi_1 + \mu^W(X) \xi_2) = - \mu^W(X) \xi_1 + \mu^Z(X) \xi_2,
\end{align*}
and so
\begin{align} \label{f:muJaction}
\mu^Z(JX) = - \mu^W(X), \qquad \mu^W(JX) = \mu^Z(X).
\end{align}
Also, $\mu$ defines natural projection operators
\begin{align*}
\pi_{\VV}(X) := \mu^Z(X) Z + \mu^W(X) W, \qquad \pi_{\HH} (X) := X - 
\pi_{\VV}(X).
\end{align*}
Lastly, we endow $\mathfrak t^2$ with the unique metric for which $\{\xi_1, 
\xi_2\}$ is an orthonormal basis, denoted $\IP{,}$.  In particular note that
\begin{align} \label{f:Liealgmetric}
\IP{\mu(X), \mu(Y)} = \mu^Z(X) \mu^Z(Y) + \mu^W(X) \mu^W(Y).
\end{align}
\end{defn}

\begin{lemma} \label{l:Hermitianbundle} Given a Sasaki-type complex surface 
$(M^4, J)$ and a Hermitian connection form, one has
\begin{align*}
[\VV,\VV] \subset \VV, \qquad [\VV,\HH] \subset \HH, \qquad  J \VV = \VV, \qquad
J \HH = \HH,
\end{align*}
\begin{proof} Since $\VV$ is spanned by $Z$ and $W$, which commute, the first 
inclusion follows
immediately, as does $J \VV = \VV$.  To show $[\VV,\HH] \subset \HH$, we 
observe that for $X \in \HH$,
\begin{align*}
\mu( [Z,X]) = \mu (L_Z X) = L_Z (\mu X) - (L_Z \mu) X = 0.
\end{align*}
To show that $J \HH = \HH$, since $J$ is invertible it suffices to
show that $J \HH \subset \HH$.  Suppose that there existed $X \in \HH$ such that
$\pi_{\VV} J X \neq 0$.  Then since $J$ is an invertible endomorphism on $\VV$, 
it
follows that $0 \neq \pi_{\VV} J J X = - \pi_{\VV} X$, a contradiction. 
\end{proof}
\end{lemma}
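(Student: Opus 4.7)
The plan is to verify each of the four assertions in turn by exploiting the defining features of a Hermitian connection form (Definition \ref{Hcl}) together with the structure of the cylinder complex structure from Definition \ref{d:cylinderdef}. The first two inclusions concerning $\VV$ are essentially immediate from the definition of the vertical space, while the two inclusions involving $\HH$ will follow from the invariance and equivariance properties of $\mu$.

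For $[\VV,\VV] \subset \VV$, I would observe that $\VV$ is the span of $Z$ and $W = \partial/\partial t$; since $Z$ is lifted from the Sasaki factor and $W$ is purely the cylinder direction, $[Z,W] = 0$ and the inclusion is trivial. For $J\VV = \VV$, I would simply recall that by the construction of $J$ in Definition \ref{d:cylinderdef}, $JZ = W$ and hence $JW = -Z$, so $J$ is in fact an involution of $\VV$ (up to sign).

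The inclusion $[\VV,\HH] \subset \HH$ is where the invariance property (3) of Definition \ref{Hcl} enters. The idea is that $\mu$ is a $\mathfrak{t}^2$-valued $1$-form, so componentwise the standard Lie-derivative identity $(L_\xi \mu)(X) = \xi(\mu(X)) - \mu([\xi, X])$ applies. For $X \in \HH = \ker \mu$ and $\xi \in \{Z, W\}$, both $\xi(\mu(X))$ and $(L_\xi \mu)(X)$ vanish, hence $\mu([\xi, X]) = 0$, i.e., $[\xi, X] \in \HH$. Finally, for $J\HH \subset \HH$, I would invoke the equivariance property (2), $\mu(JX) = J_{\mathfrak{t}^2} \mu(X)$, which directly says $\ker \mu$ is $J$-invariant; equality $J\HH = \HH$ follows since $J^2 = -\Id$.

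None of the four steps presents a genuine obstacle: each reduces to unpacking one of the three conditions in Definition \ref{Hcl}, combined with the rigid structure of $\VV$ on a Hermitian cylinder. If there is a subtle point, it is keeping track of the meaning of $L_Z \mu = 0$ for a Lie-algebra-valued form, which I would handle by applying the Lie-derivative identity to each component $\mu^Z$, $\mu^W$ separately (so that $\xi$ acts on scalar-valued forms in the usual way).
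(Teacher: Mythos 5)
Your proposal is correct, and for three of the four assertions it coincides with the paper's argument: $[\VV,\VV]\subset\VV$ and $J\VV=\VV$ from the commuting span $\{Z,W\}$ and $JZ=W$, and $[\VV,\HH]\subset\HH$ from the componentwise Lie-derivative identity applied to $X\in\ker\mu$ using invariance (condition (3) of Definition \ref{Hcl}). The one genuine difference is the last claim: the paper proves $J\HH\subset\HH$ by contradiction, supposing $\pi_{\VV}JX\neq 0$ for some $X\in\HH$ and applying $J$ again together with the invertibility of $J$ on $\VV$, whereas you read $J\HH\subset\HH$ directly off the equivariance condition (2), $\mu(JX)=J_{\mathfrak t^2}\mu(X)$, which says precisely that $\ker\mu$ is $J$-invariant, and then get equality from $J^2=-\Id$ (or invertibility plus a dimension count). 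Your route is more direct and, arguably, more transparent: the paper's contradiction argument, as written, does not explicitly control the vertical component of $J$ applied to the horizontal part of $JX$, and what closes that gap is exactly the equivariance of $\mu$ that you invoke; so your version makes the actual mechanism visible, at the small cost of using condition (2) explicitly rather than only the structure of $\VV$. Your closing remark about interpreting $L_Z\mu=0$ componentwise on $\mu^Z$, $\mu^W$ is the right way to handle the $\mathfrak t^2$-valued form and matches the paper's usage.
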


\begin{prop} \label{p:KKmetric} Given a Sasaki-type complex surface $(M^4, J)$, 
an invariant 
Hermitian metric $g$ is equivalent to an invariant triple $(g^T, \mu, \psi)$ 
where 
$g^T$ is a transverse Hermitian metric, $\mu$ is a Hermitian connection 
form, and $\psi \in C^{\infty}(M)$ is a positive function.
\begin{proof} This is essentially the standard argument decomposing an 
invariant metric on a principal bundle, just without the existence of a smooth 
quotient space.  First, given a triple $(g^T, \mu, \psi)$ as in the statement, 
we 
recover $g$ as
\begin{align*}
g(X,Y) = g^T (X, Y) + \psi \IP{\mu(X), \mu(Y)}.
\end{align*}
To obtain the decomposition given an invariant metric $g$, first observe using 
the Hermitian property that we may define
\begin{align} \label{f:fdef}
\psi := g(Z,Z) = g(W,W),
\end{align}
which is positive since $g$ is positive definite.  We check that $\psi$ is 
invariant using the Leibniz rule for Lie derivatives to 
obtain
\begin{align*}
L_{Z} \psi = L_{Z} g(Z,Z) = (L_Z g)(Z,Z) + 2 g([Z,Z],Z) = 0.
\end{align*}
Similarly $L_{W} \psi = 0$.  

Next we define a Hermitian connection form via
\begin{align} \label{f:mudef}
\mu^{Z}(X) := \psi^{-1} g(Z,X), \qquad \mu^W(X) := \psi^{-1} g(W,X).
\end{align}
We check the algebraic conditions for $\mu$ to define a Hermitian 
connection form.  First, it follows directly from (\ref{f:fdef}) and 
(\ref{f:mudef}) that $\mu^Z(Z) \equiv \mu^W(W) \equiv 1$.  Since $g$ is 
Hermitian we conclude
\begin{align*}
\mu^Z(W) \equiv \psi^{-1} g(Z,W) \equiv \psi^{-1} g(Z,JZ) \equiv 0,
\end{align*}
and similarly $\mu^W(Z) \equiv 0$, finishing condition (1).  To check condition 
(2), we choose any vector $X \in TM$ and then compute
\begin{align*}
\mu(JX) =&\ \mu^Z(JX) \xi_1 + \mu^W(JX) \xi_2\\
=&\ \psi^{-1} g(Z,JX) \xi_1 + \psi^{-1} g(W,JX) \xi_2\\
=&\ - \psi^{-1} g(W,X) \xi_1 + \psi^{-1} g(Z,X) \xi_2\\
=&\ \mu^W(X) \xi_1 - \mu^Z(X) \xi_2\\
=&\ J_{\mathfrak t^2} \mu(X).
\end{align*}
To check the invariance, we build upon the invariance of $\psi$ to obtain
\begin{align*}
(L_{Z} \mu^Z)(X) =&\ \psi^{-1} \left( L_Z (\mu^Z(X)) - \mu^Z(L_Z X) \right)\\
=&\ \psi^{-1} (L_Z g(Z,X) - g(Z, [Z,X]))\\
=&\ 0.
\end{align*}
The invariance by $W$ is proved similarly.

Lastly we define
\begin{align*}
g^T(X,Y) = g(X,Y) - \psi \IP{\mu(X), \mu(Y)}.
\end{align*}
The invariance follows directly from the previous invariance claims.  Next we 
check that it is indeed transverse, i.e.
\begin{align*}
g^T(X,Z) =&\ g(X,Z) - \psi \IP{ \mu(X), \mu(Z)}\\
=&\ \psi \mu^Z(X) - \psi \mu^Z(X)\\
=&\ 0.
\end{align*}
We also check that $g^T$ is a $(1,1)$-tensor.  To that end,
\begin{align*}
g^T(JX,JY) =&\ g(JX,JY) - \psi \IP{\mu(JX),\mu(JY)}\\
=&\ g(X,Y) - \psi \IP{J_{\mathfrak t^2} \mu(X), J_{\mathfrak t^2} \mu(Y)}\\
=&\ g^T(X,Y).
\end{align*}
\end{proof}
\end{prop}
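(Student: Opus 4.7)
The plan is to carry out a Kaluza-Klein style decomposition of the invariant Hermitian metric $g$ along the vertical distribution $\VV = \IP{Z,W}$, with the caveat that since the orbits of $Z$ and $W$ need not fiber $M$ over a smooth quotient (indeed, in the irregular case they do not), every construction must be performed on the total space $M$ and verified to be invariant under $L_Z$ and $L_W$. The equivalence then consists of two mutually inverse assignments: extracting $(g^T, \mu, \psi)$ from $g$, and reconstructing $g$ from a given triple.

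To extract the triple, first define $\psi := g(Z,Z)$. This is positive, agrees with $g(W,W)$ since $W = JZ$ and $g$ is Hermitian, and is invariant under $L_Z$ and $L_W$ by the Leibniz rule together with invariance of $g$ and $[Z,W]=0$. Next, define the connection form $\mu$ by $\mu^Z(X) := \psi^{-1} g(Z,X)$ and $\mu^W(X) := \psi^{-1} g(W,X)$. The four normalization identities in Definition \ref{Hcl}(1) are immediate, using $g(Z,W) = g(Z,JZ) = 0$. The complex-linearity condition (2), namely $\mu(JX) = J_{\mathfrak{t}^2}\mu(X)$, unpacks to the pair of identities $g(Z,JX) = -g(W,X)$ and $g(W,JX) = g(Z,X)$, each of which follows from the Hermitian identity $g(X,JY) = -g(JX,Y)$. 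Invariance of $\mu$ is formal from invariance of $g$, $Z$, and $W$. Finally set $g^T(X,Y) := g(X,Y) - \psi \IP{\mu(X), \mu(Y)}$; transversality is a one-line plug-in using the normalizations of $\mu$, and the $(1,1)$-property $g^T(JX,JY) = g^T(X,Y)$ follows from the Hermitian property of $g$ together with the fact that condition (2) makes the quadratic form $\IP{\mu(\cdot),\mu(\cdot)}$ itself $J$-invariant.

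For the converse direction, given a triple $(g^T, \mu, \psi)$, set $g(X,Y) := g^T(X,Y) + \psi \IP{\mu(X), \mu(Y)}$. Positive-definiteness follows because the two pieces are supported on complementary subbundles (by Lemma \ref{l:Hermitianbundle}) and each is positive on its own; the Hermitian property combines the Hermitian property assumed of $g^T$ with condition (2) on $\mu$; and invariance is automatic. The two procedures are then readily seen to be mutually inverse. The main obstacle is essentially just bookkeeping: verifying that the algebraic conditions defining a Hermitian connection form precisely encode what is needed for both $g^T$ and the vertical correction $\psi\IP{\mu,\mu}$ to be simultaneously $J$-invariant, transverse (where appropriate), and $L_Z, L_W$-invariant. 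There is no analytic content; the proposition is structural, separating the data of an invariant Hermitian metric into its transverse geometry, its horizontal distribution (encoded by $\mu$), and a scalar function controlling the size of the vertical directions.
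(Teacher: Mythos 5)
Your proposal is correct and follows essentially the same route as the paper: define $\psi = g(Z,Z)$, set $\mu^Z = \psi^{-1}g(Z,\cdot)$, $\mu^W = \psi^{-1}g(W,\cdot)$, verify the Hermitian connection conditions from the Hermitian property of $g$, and put $g^T = g - \psi\IP{\mu(\cdot),\mu(\cdot)}$, with the converse being the obvious reconstruction. The only additions beyond the paper's argument are your explicit remarks on positive-definiteness and mutual inverseness in the converse direction, which are harmless and correct.
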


\begin{defn} \label{d:transKF} Given a Sasaki-type complex surface $(M^4, J)$ 
and an invariant 
Hermitian metric $g$, define the \emph{transverse K\"ahler form} via
\begin{align} \label{f:transKF}
\gw^T(X,Y) =&\ g^T(X,JY).
\end{align}
Similarly define the \emph{vertical metric} via $g^V(X,Y) = g(X,Y) - g^T(X,Y) = 
\psi 
\IP{\mu(X),\mu(Y)}$, with associated vertical K\"ahler form
\begin{gather} \label{f:vertKF}
\begin{split}
\gw^V(X,Y) =&\ g(X,JY) - g^T(X,JY)\\
=&\ \psi \IP{\mu(X), \mu(JY)}\\
=&\ \psi ( - \mu^Z(X) \mu^W(Y) + \mu^W(X) \mu^Z(Y))\\
=&\ \psi \mu^W \wedge \mu^Z(X,Y).
\end{split}
\end{gather}
\end{defn}

\subsection{Torsion of invariant metrics}

Here we investigate the structure of the torsion of invariant Hermitian 
metrics.  The principal observation, deduced in Lemma \ref{metriclemma20}, is 
that 
such a metric is pluriclosed if and only if the fiber length function $\psi$ is 
constant.  We begin with the definition of the curvature of a Hermitian 
connection form.

\begin{defn} \label{d:horiz} Given a Sasaki-type complex surface $(M^4, J)$ and 
a Hermitian connection form $\mu$, the \emph{curvature} $F \in 
\Lambda^2(M) \otimes \mathfrak t^2$ is
\begin{align*}
F = d \mu = F^Z \xi_1 + F^W \xi_2.
\end{align*}
Notice that, as sections of $\Lambda^2(\HH^*)$, both $F^Z$ and $F^W$ are type 
$(1,1)$ since $\HH$ is of real rank $2$.
\end{defn}

\begin{lemma} \label{metriclemma20}  Given a 
Sasaki-type complex surface $(M^4, 
J)$, an invariant 
Hermitian metric $g$ satisfies
\begin{enumerate}
\item $d \gw^T = 0$,
\item $d \gw^V = d\psi \wedge \mu^W \wedge \mu^Z + \psi d \mu^W \wedge \mu^Z - 
\psi 
\mu^W \wedge d \mu^Z$,
\item  $d^c \gw_g = - d^c \psi \wedge \mu^Z \wedge \mu^W + \psi d \mu^W \wedge 
\mu^W 
+ \psi \mu^Z 
\wedge d \mu^Z$,
\item $d d^c \gw_g = \frac{d d^c \psi}{\psi} \wedge \gw_g.$
\end{enumerate}
In particular, $g$ is pluriclosed if and only if $\psi \equiv \mbox{const}$.
\begin{proof} For item (1), first note that since both $\HH$ and $\VV$ are rank 
$2$, evaluating any three form purely on vectors of one type or the other 
yields zero.  Now choose $X,Y \in \HH$ and $Z \in \VV$ we obtain
\begin{align*}
d \gw^T(X,Y,Z) =&\ X \gw^T(Y,Z) + Y \gw^T(Z,X) + Z \gw^T(X,Y)\\
&\ - \gw^T ([X,Y],Z) 
+ \gw^T([X,Z],Y) - \gw^T([Y,Z],X)\\
=&\ Z \gw^T(X,Y) + \gw^T([X,Z],Y) - \gw^T([Y,Z],X)\\
=&\ L_Z(\gw^T(X,Y)) - \gw^T( L_Z X, Y) - \gw^T (X,L_Z Y)\\
=&\ (L_Z \gw^T)(X,Y) + \gw^T(X, L_Z Y) - \gw^T(X, L_Z Y)\\
=&\ 0,
\end{align*}
as required.  Similarly, choosing $X \in \HH$ and $Z,W \in \VV$ we compute, 
using that $\VV$ is integrable,
\begin{align*}
d \gw^T(X,Z,W) =&\ X \gw^T(Z,W) + W \gw^T(X,Z) + Z \gw^T(W,X)\\
&\ - \gw^T([X,Z],W) + \gw^T([X,W],Z) - \gw^T([Z,W],X)\\
=&\ 0,
\end{align*}
and so $d \gw^T = 0$.

We compute $d \gw^{V}$ using (\ref{f:vertKF}) to immediately yield
\begin{align*}
d \gw^V =&\ d (\psi \mu^W \wedge \mu^Z) = d \psi \wedge \mu^W \wedge \mu^Z + 
\psi d \mu^W 
\wedge \mu^Z - \psi \mu^W \wedge d \mu^Z,
\end{align*}
as claimed.

Next, we compute, using (1), (2), (\ref{f:muJaction}), and the fact that $F$ is 
of type $(1,1)$,
\begin{align*}
 d^c \gw_g =&\ - d \gw_g(J,J,J)\\
 =&\ - d \gw^V(J, J, J)\\
 =&\ - \left\{ (d\psi \circ J) \wedge \mu^Z \wedge (-\mu^W) + \psi d \mu^W 
\wedge (- 
\mu^W) - \psi (\mu^Z) \wedge d \mu^Z \right\}\\
 =&\ - d^c \psi \wedge \mu^Z \wedge \mu^W + \psi d \mu^W \wedge \mu^W + \psi 
\mu^Z 
\wedge d \mu^Z,
\end{align*}
as claimed.

Differentiating again we obtain, using that $d \mu^Z, d \mu^W$ and $\psi$ are 
invariant and 
comparing against (\ref{f:vertKF}),
\begin{align*}
d d^c \gw_g =&\ d \left[ - d^c \psi \wedge \mu^Z \wedge \mu^W + \psi d \mu^W 
\wedge 
\mu^W - \psi \mu^Z \wedge d \mu^Z \right]\\
 =&\ - d d^c \psi \wedge \mu^Z \wedge \mu^W\\
 =&\ \frac{ d d^c \psi}{\psi} \wedge \gw^V\\
 =&\ \frac{ d d^c \psi}{\psi} \wedge \gw_g,
 \end{align*}
 where the last line follows because $d d^c \psi$ is horizontal, and hence $d 
d^c 
\psi \wedge \gw^T = 0$.  This finishes (4).
 
Using this formula and multiplying by $\psi > 0$, we see that $d d^c \gw_g = 0$ 
if 
and only if $0 = d d^c \psi \wedge \gw_g = (\tr_{\gw} \i \del \delb \psi) 
\gw_g^2$, 
if and only if $\gD_C \psi = 0$.  It thus follows from a standard maximum 
principle 
argument that $d d^c \gw_g = 0$ if and only if $\psi$ is constant, as claimed.
\end{proof}
\end{lemma}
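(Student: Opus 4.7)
The idea is to decompose the K\"ahler form as $\gw_g = \gw^T + \gw^V$ via Definition \ref{d:transKF} and to compute the differentials of each piece separately, exploiting repeatedly that both the horizontal distribution $\HH$ and the vertical distribution $\VV$ have real rank $2$. This rank constraint forces any $3$-form evaluated on three vectors of the same type to vanish automatically, and it makes $dd^c\psi \wedge \gw^T = 0$ once $dd^c\psi$ is known to be purely horizontal.

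To prove (1), I would test $d\gw^T$ on an arbitrary triple via the invariant formula for $d$. Pure horizontal triples vanish for rank reasons, as do pure vertical ones. For a mixed triple $(X,Y,Z)$ with $X,Y \in \HH$ and $Z \in \VV$, every term reduces via $[\VV,\HH]\subset \HH$ and $[\VV,\VV]\subset\VV$ (Lemma \ref{l:Hermitianbundle}) to a Lie derivative of $\gw^T$ along a vertical field, which vanishes by invariance; the two-vertical/one-horizontal case is analogous. For (2), I would simply apply Leibniz to $\gw^V = \psi\, \mu^W \wedge \mu^Z$ from (\ref{f:vertKF}).

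For (3), since $d\gw_g = d\gw^V$ by (1), I would compute $d^c\gw_g$ by pulling back the formula from (2) through $J$ on each argument, using the identities (\ref{f:muJaction}), namely $\mu^Z \circ J = -\mu^W$ and $\mu^W \circ J = \mu^Z$, together with $d^c\psi = -d\psi \circ J$; the signs then combine into the stated expression. For (4), I would differentiate the result of (3). Because $F = d\mu$ is type $(1,1)$ on $\HH$ (Definition \ref{d:horiz}) and both $d\mu^Z$, $d\mu^W$, $\psi$ are invariant, the terms of the form $d\mu \wedge \mu$ pair against horizontal $2$-forms in a way that kills them, leaving only $-dd^c\psi \wedge \mu^Z \wedge \mu^W$, which by (\ref{f:vertKF}) equals $\frac{dd^c\psi}{\psi}\wedge \gw^V$. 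Replacing $\gw^V$ by $\gw_g$ is then free, since $dd^c\psi$ is horizontal and the horizontal bundle has rank $2$, so $dd^c\psi \wedge \gw^T = 0$.

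The pluriclosed criterion is then immediate from (4): $dd^c\gw_g = 0$ iff $dd^c\psi \wedge \gw_g = 0$, which upon taking $\gw_g$-trace is equivalent to the Chern-Laplacian equation $\gD_C \psi = 0$. Since all invariant objects descend to the compact Hopf surface quotient and $\psi$ is invariant, the strong maximum principle forces $\psi$ to be constant. The main technical obstacle is the careful bookkeeping of signs and $J$-pullbacks in step (3), and the verification in step (4) that the apparent curvature contributions $d\mu^Z \wedge \mu^Z$ etc. genuinely cancel against each other; once the rank-$2$ dichotomy between $\HH$ and $\VV$ is installed, every individual calculation is short.
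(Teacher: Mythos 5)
Your proposal follows the paper's proof essentially verbatim: the same decomposition $\gw_g = \gw^T + \gw^V$, the same rank-two arguments for $\HH$ and $\VV$, the Leibniz computation of $d\gw^V$, the $J$-pullback via (\ref{f:muJaction}) for item (3), the horizontality of $dd^c\psi$ to pass from $\gw^V$ to $\gw_g$ in item (4), and the trace/maximum-principle conclusion. It is correct, and no further comparison is needed.
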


\subsection{Bismut curvature}

In this subsection we establish formulas for the Bismut curvature of a 
pluriclosed 
invariant metric on a Sasaki-type complex surface.  We begin with a basic lemma 
producing a frame canonically associated to any point.

\begin{lemma} \label{adaptedframe}  Given a pluriclosed 
invariant Hermitian metric $g$
on a Sasaki-type complex surface $(M^4, J)$, for each $p \in M$ there exist 
local coordinate
vector fields $\{\del_{x_1}, \del_{x_2}\}$ such that
\begin{align*}
\mu(\del_{x_i}) (p) = 0, \qquad J \del_{x_1} = \del_{x_2}, \qquad [\del_{x_i}, 
Z] = [\del_{x_i},W] = 0.
\end{align*}
Furthermore, the vector fields defined by
\begin{align*}
 e_i = \del_{x_i} - \mu^Z(\del_{x_i}) Z - \mu^W(\del_{x_i}) W,
\end{align*}
satisfy
\begin{align*}
\mbox{span} \{e_1, e_2\} = \HH, \qquad J
e_1 = e_2.
\end{align*}
Furthermore one has
\begin{align*}
 [e_1, e_2] =&\ - F^Z (\del_{x_1}, \del_{x_2}) Z - F^W (\del_{x_1}, \del_{x_2}) 
W, \quad [e_i,
Z] = [e_i, W] = 0.
\end{align*}
\begin{proof} Given $p \in M$, we fix a Hermitian basis for $\HH_p$, then 
extend 
this using $\{Z_p,W_p\}$ to yield a Hermitian basis for $TM_p$.  By standard 
arguments this basis can be extended locally to a complex coordinate frame for 
$TM$, with the extended basis including $\{V,W\}$ locally.  Choosing the vector 
fields associated to the initially spanning vectors for $\HH_p$ yields 
$\{\del_{x_1}, \del_{x_2}\}$ which by construction satisfy the three claimed 
conditions.

It is clear by construction that $e_1, e_2$ are horizontal and linearly
independent, hence span $\HH$.  Moreover, we note using (\ref{f:muJaction})
that
\begin{align*}
J e_1 =&\ J \left( \del_{x_1} - \mu^Z(\del_{x_1}) Z -
\mu^W(\del_{x_2}) W \right)\\
=&\ \del_{x_2} - \mu^Z(\del_{x_1}) W + \mu^W(\del_{x_1})
Z\\
=&\ \del_{x_2} + \mu^Z(J \del_{x_2}) W - \mu^W(J \del_{x_2})
Z\\
=&\ \del_{x_2} - \mu^Z(\del_{x_2}) Z  - \mu^W (\del_{x_2})
W\\
=&\ e_2.
\end{align*}
Now we compute the commutators.  Note using invariance of the connection that
\begin{align*}
 Z \mu(\del_{x_i}) =&\ (L_{Z} \mu)(\del_{x_i}) + \mu([Z,\del_{x_i}]) =
0,
\end{align*}
and similarly for $W$.  Thus
\begin{align*}
[e_1,e_2] =&\ [\del_{x_1} - \mu^Z(\del_{x_1}) Z - \mu^W(\del_{x_1}) W, 
\del_{x_2} -
\mu^Z(\del_{x_2}) Z - \mu^W(\del_{x_2}) W]\\
=&\ \left\{ \del_{x_2} \mu^Z(\del_{x_1}) - \del_{x_1} \mu^Z(\del_{x_2}) 
\right\} 
Z + \left\{ \del_{x_2} \mu^W(\del_{x_1}) - \del_{x_1} \mu^W(\del_{x_2}) 
\right\} 
W\\
=&\ - F^Z (\del_{x_1}, \del_{x_2}) Z - F^W (\del_{x_1}, \del_{x_2}) W.
\end{align*}
The remaining vanishing claims are immediate.
\end{proof}
\end{lemma}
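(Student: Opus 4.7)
The plan is to build a local coordinate system adapted simultaneously to the vertical foliation and to $J$, and then verify the properties of the $e_i$ by direct unraveling of definitions. First, since $[Z,W] = 0$ (by Lemma \ref{l:Hermitianbundle}) and $Z, W$ are pointwise independent, the simultaneous flow-box theorem produces a chart $(x_1, x_2, s, t)$ centered at $p$ with $\del_s = Z$ and $\del_t = W$, in which the transversal slice $\Sigma := \{s = t = 0\}$ may be freely prescribed. I would take $\Sigma$ to be a holomorphic curve through $p$ with $T_p \Sigma = \HH_p$; such a curve exists because $M$ is a complex surface and $\HH_p$ is a $J$-invariant real $2$-plane transverse to $\VV_p$ (Lemma \ref{l:Hermitianbundle}). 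Picking a holomorphic coordinate $z = x_1 + i x_2$ on $\Sigma$ with $z(p) = 0$ then guarantees $J \del_{x_1} = \del_{x_2}$ along $\Sigma$, and $\mu(\del_{x_i})(p) = 0$ since $\del_{x_i}(p) \in \HH_p = \ker \mu_p$. Extending $x_1, x_2$ to the full chart by flowing along $Z, W$ yields coordinate vector fields $\del_{x_i}$ commuting with $Z = \del_s$ and $W = \del_t$, and since $J$ is preserved by the $Z, W$ flows (Proposition \ref{p:cylinderdef} (4)), the identity $J\del_{x_1} = \del_{x_2}$ propagates from $\Sigma$ to the whole neighborhood.

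Granted this chart, the stated properties of $e_i = \del_{x_i} - \mu^Z(\del_{x_i}) Z - \mu^W(\del_{x_i}) W$ are pure bookkeeping. Horizontality and linear independence (hence $\mbox{span}\{e_1, e_2\} = \HH$) follow from the normalizations in Definition \ref{Hcl}(1). The identity $Je_1 = e_2$ is obtained by applying $J$ term-by-term and using $JZ = W$, $JW = -Z$, and the compatibility (\ref{f:muJaction}); this is the author's displayed chain. The vanishings $[e_i, Z] = [e_i, W] = 0$ combine the vanishing commutators $[\del_{x_i}, Z] = [\del_{x_i}, W] = [Z, W] = 0$ with the $Z, W$-invariance of the scalars $\mu^Z(\del_{x_i}), \mu^W(\del_{x_i})$ (which follows from $L_Z \mu = L_W \mu = 0$ together with $[Z, \del_{x_i}] = [W, \del_{x_i}] = 0$). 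Finally, expanding $[e_1, e_2]$ bilinearly, the cross-terms involving $[Z, W]$ and $[\del_{x_i}, Z], [\del_{x_i}, W]$ all vanish, and $[\del_{x_1}, \del_{x_2}] = 0$ eliminates the remaining horizontal piece; the surviving contribution is exactly $-F^Z(\del_{x_1}, \del_{x_2}) Z - F^W(\del_{x_1}, \del_{x_2}) W$ by the definition $F = d\mu$ and the formula $d\mu(\del_{x_1}, \del_{x_2}) = \del_{x_1} \mu(\del_{x_2}) - \del_{x_2} \mu(\del_{x_1})$ for commuting fields.

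The only nontrivial step is the first one: arranging the transversal slice to be simultaneously a $J$-holomorphic curve and transverse to the vertical foliation, so that the pointwise identity $J\del_{x_1}(p) = \del_{x_2}(p)$ propagates to a full neighborhood. This rests on two ingredients already available: the complex-geometric fact that any $J$-invariant real $2$-plane in $T_pM$ is tangent to some local holomorphic curve, and the $J$-invariance of the vertical flows, which is guaranteed by the ``invariant'' hypothesis on the metric together with Proposition \ref{p:cylinderdef}. Everything after that is a mechanical verification.
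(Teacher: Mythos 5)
Your proposal is correct, and the verification half (horizontality and independence of the $e_i$, the chain giving $Je_1=e_2$ via (\ref{f:muJaction}), the invariance of the scalars $\mu^Z(\del_{x_i}),\mu^W(\del_{x_i})$, and the bracket computations identifying $F=d\mu$) coincides with the paper's. Where you differ is in how the adapted coordinates are produced: the paper extends a Hermitian basis of $T_pM$ containing $Z_p,W_p$ to a \emph{complex} coordinate frame whose coordinate fields include $Z,W$ (in effect a holomorphic straightening of the holomorphic field $\tfrac12(Z-\i W)$, so that $J\del_{x_1}=\del_{x_2}$ is automatic), whereas you use the real simultaneous flow-box for the commuting pair $Z,W$ over a transversal chosen to be a holomorphic curve with $T_p\Sigma=\HH_p$, and then propagate the identity $J\del_{x_1}=\del_{x_2}$ off the slice by pushing forward under the flows of $Z$ and $W$, which preserve $J$ by Proposition \ref{p:cylinderdef} (4). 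Both routes rest on the same three facts ($[Z,W]=0$, $L_ZJ=L_WJ=0$, and $J\HH_p=\HH_p$); yours is more elementary and makes explicit where the invariance of $J$ enters, at the cost of not producing a genuinely holomorphic chart (which the paper's version does, though nothing later requires it). One small point: in your expansion of $[e_1,e_2]$, the purely vertical cross-terms such as $[\mu^Z(\del_{x_1})Z,\,\mu^Z(\del_{x_2})Z]$ vanish not merely because $[Z,W]=0$ but because the coefficients are $Z,W$-invariant; you have already established that invariance, so this is only a matter of stating it where it is used.
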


\begin{rmk} \label{r:frameremarks} Without further explicitly invoking Lemma 
\ref{adaptedframe}, given a pluriclosed invariant Hermitian metric $g$ we will henceforth 
ask for ``an adapted frame'' at a point $p \in (M^4, J)$ a Sasaki-type complex 
surface.  This will mean the vector fields $\{e_i\}$ constructed in Lemma 
\ref{adaptedframe}, augmented with $\{Z,W\}$ to yield a local frame.  In 
computations we will use lowercase Roman letters to refer to the vectors 
$\{e_i\}$, and Greek letters $\{e_{\ga}\}, \ga = 1,2$, to refer to the vectors 
$\{V,W\}$.  Furthermore we use uppercase Roman letters to indicate a general 
element of the overall frame.

Lastly, we note that in the the construction of Lemma \ref{adaptedframe}, by a 
linear change of coordinates on $\{e_1,e_2\}$ in the transverse direction, we 
can assume without loss of generality that $e_i g(e_j,e_k)(p) = 0$.  Using this 
it is easy to see that one then has $e_A g(e_B,e_C)(p) = 0$, so that all first 
derivatives of the metric with respect to the frame vanish at $p$.
\end{rmk}

\begin{lemma} \label{Christoffel}  Given a pluriclosed 
invariant Hermitian metric $g$
on a Sasaki-type complex surface $(M^4, J)$, $p \in M$, and an adapted frame at 
$p$, one has
\begin{align*}
 ^{g}\gG_{ijk} =&\ ^{g^T}\gG_{ijk},\\
 ^{g}\gG_{ij\ga} =&\ - \tfrac{1}{2} F_{ij\ga},\\
  ^g\gG_{i \ga j} =&\ ^g \gG_{\ga i j} = \tfrac{1}{2} F_{ij \ga},
\end{align*}
and all other Christoffel symbols vanish.
\begin{proof} Recall the basic formula
\begin{align*}
\IP{\N_X Y,Z} =&\ \tfrac{1}{2} \left\{ X \IP{Y,Z} + Y\IP{X,Z} - Z\IP{X,Y} -
\IP{[X,Z],Y} - \IP{[Y,Z],X} - \IP{[Y,X],Z} \right\}.
\end{align*}
Using Lemma \ref{adaptedframe}, since the Lie bracket of any two horizontal
fields in the frame is vertical, the equality $^{g}\gG_{ijk} =\ ^{g^T}\gG_{ijk}$
follows immediately.  To compute $^g\gG_{ij\ga}$, using the invariance 
properties of the metric and the frame the
only possible remaining terms are the Lie bracket terms, which using Lemma
\ref{adaptedframe} yields
\begin{align*}
 ^g\gG_{ij\ga} =&\ \tfrac{1}{2} \IP{[e_i,e_j], e_{\ga}} = - \tfrac{1}{2}
g_{\ga \gb} F^{\gb}_{ij} = - \tfrac{1}{2} F_{ij \ga}.
\end{align*}
Similarly
\begin{align*}
 ^g\gG_{i \ga j} =&\ - \tfrac{1}{2} \IP{ [e_i,e_j], e_{\ga}} = 
\tfrac{1}{2} g_{\ga \gb} F^{\gb}_{ij} = \tfrac{1}{2} F_{ij\ga}.
\end{align*}
\end{proof}
\end{lemma}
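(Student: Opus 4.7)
The plan is a case-by-case application of the Koszul formula in the adapted frame from Lemma \ref{adaptedframe}, exploiting the very restricted Lie-bracket structure of that frame together with the constancy of $\psi$ guaranteed by Lemma \ref{metriclemma20}.

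First I would record the two structural simplifications that drive every computation. By Lemma \ref{adaptedframe} the only nonvanishing commutator of frame vectors is $[e_i, e_j] = -F^\alpha_{ij} e_\alpha$, while every commutator involving a vertical frame vector is zero. Moreover, the only nonzero components of $g$ in this frame are $g(e_i,e_j) = g^T(e_i,e_j)$ (the horizontal--vertical components vanish identically since $\mu(e_i)=0$) and $g(e_\alpha, e_\beta) = \psi\,\delta_{\alpha\beta}$; by Remark \ref{r:frameremarks} the horizontal derivatives of the first set vanish at $p$, while the second set is globally constant since $\psi$ is. In particular every metric-derivative term entering the Koszul formula at $p$ vanishes, so the formula collapses in each case to a sum of at most three bracket terms of the form $-\tfrac{1}{2}\IP{[e_A, e_B], e_C}$.

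From there I would split into the possible frame-index types. For purely horizontal indices $^g\Gamma_{ijk}$, the bracket $\IP{[e_i,e_j],e_k}$ vanishes because $[e_i,e_j]$ is vertical and $e_k$ horizontal, and the surviving metric-derivative terms depend only on the restriction of $g$ to $\HH$, which is $g^T$; this reproduces the analogous Koszul computation for $g^T$ and yields $^g\Gamma_{ijk} = {}^{g^T}\Gamma_{ijk}$. For the three mixed symbols $^g\Gamma_{ij\alpha}$, $^g\Gamma_{i\alpha j}$, $^g\Gamma_{\alpha i j}$, exactly one of the three Koszul bracket terms involves the nontrivial commutator $[e_i,e_j]$; contracting against $g_{\alpha\beta} = \psi\,\delta_{\alpha\beta}$ and using $F_{ij\alpha} := g_{\alpha\beta} F^\beta_{ij}$ produces the claimed $\mp\tfrac{1}{2} F_{ij\alpha}$, with the sign determined by which of the three bracket slots in the Koszul formula is occupied. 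Finally, for any Christoffel symbol with two or more vertical indices, every frame commutator appearing in the Koszul formula is zero and every derivative term vanishes, so the symbol is zero.

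The argument is essentially bookkeeping once the adapted frame is in place; the only real subtlety is the sign-tracking across the three Koszul bracket terms, which I do not expect to present a genuine obstacle.
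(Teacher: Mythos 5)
Your proposal is correct and follows essentially the same route as the paper: both apply the Koszul formula in the adapted frame of Lemma \ref{adaptedframe}, use the invariance/constancy of the metric data (with $\psi$ constant by Lemma \ref{metriclemma20} and the normalization of Remark \ref{r:frameremarks}) to kill all derivative terms, and read off the surviving bracket terms $[e_i,e_j]=-F^{\ga}_{ij}e_{\ga}$ to get the $\pm\tfrac{1}{2}F_{ij\ga}$ contributions and the identification $^{g}\gG_{ijk}={}^{g^T}\gG_{ijk}$.
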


\begin{lemma} \label{torsionlemma}  Given a pluriclosed 
invariant Hermitian metric $g$
on a Sasaki-type complex surface $(M^4, J)$, $p \in M$ and an adapted frame at 
$p$, one has
\begin{align*}
 (d^c \gw)_{ijk} = (d^c \gw)_{i \ga \gb} = 0, \qquad (d^c \gw)_{ij\ga} = 
F_{ij\ga}.
\end{align*}

\begin{proof} We recall $H = \tfrac{1}{2} d^c \gw$.  We first make a general
calculation.  Since $\gw(X,Y) = g(X,JY)$, we see
\begin{align*}
d^c \gw(X,Y,Z) =&\ - d \gw(JX, JY, JZ)\\
=&\ - \left\{ JX \gw(JY, JZ) + JZ \gw (JX,JY) + JY \gw( JZ, JX) \right.\\
&\ \left.  - \gw ([JX,JY], JZ) + \gw ([JX,JZ], JY) - \gw([JY,JZ],JX) \right\}\\
=&\ JX g( JY, Z) + JZ g( JX, Y) + JY g(JZ, X)\\
&\ - g( [JX, JY], Z) + g ([JX,JZ],Y) - g([JY,JZ],X).
\end{align*}
Using this and Lemma \ref{adaptedframe} it is clear that $H_{ijk} = H_{i \ga 
\gb} = 0$, and moreover
\begin{align*}
 (d^c \gw)_{12 \ga} =&\ - g([J e_1, J e_2],e_{\ga} ) = g ([ e_2, e_1],
e_{\ga}) = F_{12 \ga}.
\end{align*}
\end{proof}
\end{lemma}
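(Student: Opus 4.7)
The plan is to start from the defining identity
$d^c \gw(X,Y,Z) = -d\gw(JX, JY, JZ)$,
expand the right hand side via Cartan's formula for $d\gw$, and rewrite each pairing $\gw(J\cdot, J\cdot)$ in terms of $g$ using $\gw = g(\cdot, J\cdot)$ and $J^2 = -\Id$. This produces a local formula for $d^c \gw$ consisting of six terms: three directional derivatives of components $g(Je_A, e_B)$, and three $g$-pairings of Lie brackets $[Je_A, Je_B]$ against a frame vector $e_C$. I would then verify the three assertions of the lemma by case analysis on whether the indices are horizontal (Roman) or vertical (Greek).

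Two structural facts drive the vanishings. First, by Lemma \ref{adaptedframe} refined by Remark \ref{r:frameremarks}, all first derivatives of the components $g(e_A, e_B)$ vanish at $p$, and by Lemma \ref{metriclemma20} the pluriclosed hypothesis forces $\psi \equiv \mbox{const}$, so the purely vertical components $g(e_\ga, e_\gb) = \psi \IP{\mu(e_\ga), \mu(e_\gb)}$ are globally constant. Hence every directional derivative term in the expansion vanishes at $p$. Second, Proposition \ref{p:KKmetric} yields $g(\HH, \VV) = 0$, Lemma \ref{l:Hermitianbundle} yields $[\HH,\VV] \subset \HH$ and $[\VV, \VV] \subset \VV$, and Lemma \ref{adaptedframe} provides both the vanishing of $[e_i, Z] = [e_i, W] = 0$ and the explicit formula $[e_1, e_2] = - F^Z(\del_{x_1}, \del_{x_2})Z - F^W(\del_{x_1}, \del_{x_2}) W$.

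With these in hand, the first identity $(d^c\gw)_{ijk} = 0$ is automatic since $\HH$ has real rank two. For $(d^c\gw)_{i\ga\gb}$, every derivative term vanishes (either by constancy of $\psi$ or by $g(\HH, \VV) = 0$), while the three bracket terms vanish because $[Je_i, Je_\ga] \in \HH$ is paired with a vertical vector, and similarly for $[Je_i, Je_\gb]$, while $[Je_\ga, Je_\gb]$ reduces to $[Z, W] = 0$. For the main identity $(d^c\gw)_{ij\ga} = F_{ij\ga}$, the derivative terms vanish as above, and two of the three bracket terms vanish because $[e_i, Z] = [e_i, W] = 0$, leaving only $-g([Je_i, Je_j], e_\ga)$. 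Applying the $(1,1)$-type property of $F$ from Definition \ref{d:horiz} gives $F(Je_i, Je_j) = F(e_i, e_j)$, and combined with the explicit bracket formula this yields exactly $F_{ij\ga}$.

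The main obstacle is purely bookkeeping: tracking signs in the six-term Cartan expansion after the triple $J$-substitution, and correctly classifying each term according to which of the four cancellation principles applies (adapted frame, constancy of $\psi$, horizontal/vertical orthogonality, or explicit bracket vanishing). Once the terms are sorted by the horizontal/vertical type of their arguments, only a single nontrivial bracket evaluation survives, and the curvature identity for $F$ finishes the computation.
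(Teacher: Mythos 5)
Your proposal is correct and follows essentially the same route as the paper: expand $d^c\gw(X,Y,Z) = -d\gw(JX,JY,JZ)$ via Cartan's formula, rewrite in terms of $g$, and run a case analysis in the adapted frame of Lemma \ref{adaptedframe}, with only the bracket term $-g([Je_1,Je_2],e_\ga)$ surviving and giving $F_{12\ga}$. Your explicit sorting of the vanishing terms (invariance, $g(\HH,\VV)=0$, constancy of $\psi$ from the pluriclosed hypothesis, and the bracket relations) is just a more detailed account of what the paper dismisses as ``clear.''
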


\begin{lemma} \label{Bismutchris}  Given a pluriclosed 
invariant Hermitian metric $g$
on a Sasaki-type complex surface $(M^4, J)$, $p \in M$ and an adapted frame at 
$p$, one has
\begin{align*}
^B \gG_{ijk} =&\ ^{g}\gG_{ijk},\\
^B \gG_{\ga i j} =&\ F_{ij\ga},
\end{align*}
and all other Christoffel symbols vanish.
\begin{proof} These formulas follow directly from the above Lemmas 
\ref{Christoffel} and \ref{torsionlemma} and the 
formula
\begin{align*}
^B \gG_{ABC} =\ ^g \gG_{ABC} + \tfrac{1}{2} (d^c \gw)_{ABC}.
\end{align*}
\end{proof}
\end{lemma}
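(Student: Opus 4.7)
The plan is to reduce the lemma to a purely algebraic verification by plugging the data from the two preceding lemmas into the defining identity
\begin{align*}
^B\gG_{ABC} =\ ^g\gG_{ABC} + \tfrac{1}{2}(d^c\gw)_{ABC}
\end{align*}
that relates the Bismut connection to the Levi-Civita connection. Lemma \ref{Christoffel} supplies the Levi-Civita Christoffel symbols and Lemma \ref{torsionlemma} supplies the relevant components of the $3$-form $d^c\gw$; the remaining components of $d^c\gw$ are recovered via its antisymmetry as an alternating form.

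I would then work through the cases by index type. When all three indices are horizontal, Lemma \ref{torsionlemma} gives $(d^c\gw)_{ijk}=0$, so the Bismut symbol reduces to $^g\gG_{ijk}$. The case $^B\gG_{\ga i j}$ with a vertical direction of differentiation combines $^g\gG_{\ga ij} = \tfrac{1}{2}F_{ij\ga}$ from Lemma \ref{Christoffel} with $(d^c\gw)_{\ga ij} = (d^c\gw)_{ij\ga} = F_{ij\ga}$, obtained by cyclic invariance of $d^c\gw$, to produce $F_{ij\ga}$, matching the claim. For the remaining two-horizontal-one-vertical orderings $^B\gG_{ij\ga}$ and $^B\gG_{i\ga j}$, the Levi-Civita and $d^c\gw$ contributions acquire opposite signs once one tracks the parity of the index permutation relative to $(i,j,\ga)$, so they cancel and the Bismut symbols vanish. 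Finally, any symbol with two or three vertical indices is forced to vanish because both Lemma \ref{Christoffel} and, by antisymmetry of $d^c\gw$, Lemma \ref{torsionlemma} already vanish for such index patterns.

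There is no substantive obstacle: the whole argument is a bookkeeping exercise. The one subtlety worth being careful about is that $^g\gG_{ABC}$ is only asymmetric in its last two slots (modulo torsion), whereas $d^c\gw$ is fully antisymmetric, so in cases such as $^B\gG_{ij\ga}$ versus $^B\gG_{i\ga j}$ the signs have to be tracked via how the index permutation acts on each of the two pieces separately.
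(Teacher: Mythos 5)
Your proposal is correct and follows essentially the same route as the paper: the paper's proof is exactly the substitution of the components from Lemmas \ref{Christoffel} and \ref{torsionlemma} into $^B\gG_{ABC} = {}^g\gG_{ABC} + \tfrac{1}{2}(d^c\gw)_{ABC}$, and your case-by-case bookkeeping (including the cancellations in $^B\gG_{ij\ga}$ and $^B\gG_{i\ga j}$ and the sign from antisymmetry of $d^c\gw$) just makes explicit what the paper leaves as ``follows directly.''
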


\begin{prop} \label{STrhoB}  Given a pluriclosed invariant 
Hermitian metric $g$
on a Sasaki-type complex surface $(M^4, J)$, $p \in M$, and an adapted frame at 
$p$, one has
\begin{align*}
\rho_B(e_1,e_2) =&\ \tfrac{1}{2} \left(- R^T + \brs{F}^2 \right)\\
\rho_B(e_1,Z) =&\ \left( d \tr_{\gw} F^Z \right)_1\\
\rho_B(e_1,W) =&\ \left( d \tr_{\gw} F^W \right)_1\\
\rho_B(e_2,Z) =&\ \left( d \tr_{\gw} F^Z \right)_2\\
\rho_B(e_2,W) =&\ \left( d \tr_{\gw} F^W \right)_2\\
\rho_B(Z,W) =&\ 0.
\end{align*}
 \begin{proof} We will drop the notation ``$B$'' from the connection 
coefficients and curvature tensor throughout this proof, and so $\gG$ and 
$\Omega$ are associated to the Bismut connection.  To begin with recall the 
basic formula
 \begin{align} \label{Bismutcurv}
 \Omega_{ABC}^D =&\ e_{A} \gG_{BC}^D - e_B \gG_{AC}^D + \gG_{BC}^E \gG_{AE}^D - 
\gG_{AC}^E \gG_{BE}^D - [e_A,e_B]^E \gG_{EC}^D.
 \end{align}
 Since all first derivatives of the metric coefficients with respect to the 
frame vanish at $p$ (cf. Remark \ref{r:frameremarks}), we obtain the formula
 \begin{align*}
 \Omega(A,B,C,D) = e_A \gG_{BCD} - e_B \gG_{ACD} + \gG_{BC}^E \gG_{AED} - 
\gG_{AC}^E \gG_{BED} - [e_A,e_B]^E \gG_{ECD}.
 \end{align*}
Next we observe the general calculation
 \begin{align*}
 \rho_B(X,Y) =&\ \tfrac{1}{2} \sum_{i=1}^4 \Omega(X,Y, e_i, J e_i)\\
 =&\ \Omega(X,Y, e_1, e_2) + \Omega(X,Y, Z, W)\\
=&\ \Omega(X,Y, e_1, e_2), 
  \end{align*}
  where the last line follows because every Christoffel symbol of the form 
$\gG_{AB\ga}$ vanishes by Lemma \ref{Bismutchris}.  First we can compute
  \begin{align*}
  \rho_B(e_1,e_2) =&\ \Omega(e_1,e_2,e_1,e_2)\\
  =&\ e_1 \gG_{212} - e_2 \gG_{112} + \gG_{2 1}^E \gG_{1 E 2} - \gG_{11}^E 
\gG_{2 E 2} - [e_1,e_2]^E \gG_{E12}\\
  =&\ \Rm^{g^T}_{1212} + F_{12\ga} F_{12\ga}\\
  =&\ \tfrac{1}{2} \left(- R^T + \brs{F}^2 \right).
  \end{align*}
Next we see
\begin{align*}
\rho_B(e_1,Z) =&\ e_1 \gG_{Z12} - Z \gG_{112} + \gG_{Z1}^E \gG_{1 E 2} - 
\gG_{1 1}^E \gG_{Z E 2}= e_1 (F_{12Z}) = \left( d \tr_{\gw} F^Z \right)_1.
\end{align*}
Similarly
\begin{align*}
\rho_B(e_1,W) =&\ e_1 \gG_{W12} - W \gG_{112} + \gG_{W1}^E \gG_{1E 2} - 
\gG_{11}^E \gG_{WE2} = e_1 (F_{12W}) = \left( d \tr_{\gw} F^W \right)_1.
\end{align*}
Similarly
\begin{align*}
\rho_B(e_2,Z) =&\ e_2 \gG_{Z12} - Z \gG_{112} + \gG_{Z1}^E \gG_{2E2} - 
\gG_{21}^E \gG_{ZE2} = e_2 (F_{12Z}) = \left( d \tr_{\gw} F^Z \right)_2.
\end{align*}
Similarly
\begin{align*}
\rho_B(e_2,W) = e_2 \gG_{W12} - W \gG_{112} + \gG_{W1}^E \gG_{2E2} - 
\gG_{21}^E \gG_{WE2} = e_2 (F_{12W}) = \left( d \tr_{\gw} F^W \right)_2.
\end{align*}
Lastly we see, using the invariance properties and Lemma \ref{Bismutchris},
\begin{align*}
\rho_B(Z,W) = Z \gG_{W12} - W \gG_{Z12} + \gG_{W1}^E \gG_{ZE2} - 
\gG_{Z1}^E \gG_{WE2} = F_{1 E W} F_{E 2 Z} - F_{1EZ} F_{E2W} = 0.
\end{align*}
\end{proof}
\end{prop}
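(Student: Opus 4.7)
The plan is to compute the Bismut curvature tensor $\Omega^B$ at $p$ in the adapted frame using Lemma \ref{Bismutchris}, then contract to extract $\rho_B$. Since $\rho_B(X,Y) = \tfrac{1}{2} \sum_A \Omega^B(X,Y,e_A, J e_A)$ and in the adapted frame $Je_1 = e_2$, $JZ = W$, this reduces to
\[
\rho_B(X,Y) = \Omega^B(X,Y,e_1,e_2) + \Omega^B(X,Y,Z,W).
\]
The second summand vanishes identically: by Lemma \ref{Bismutchris}, the only nonzero Bismut Christoffel symbols are ${}^B\Gamma_{ijk}$ (all three indices horizontal) and ${}^B\Gamma_{\alpha ij} = F_{ij\alpha}$ (first index vertical, remaining horizontal). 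Expanding $\Omega^B(\cdot,\cdot,Z,W)$ via the usual curvature formula shows every summand contains a Christoffel of the form ${}^B\Gamma_{?ZW}$ or ${}^B\Gamma_{?Z?}$ with a vertical index in the wrong slot, none of which match either nonzero pattern.

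By Remark \ref{r:frameremarks} I refine the adapted frame so that all first derivatives of metric components vanish at $p$, reducing the curvature formula to
\[
\Omega^B(A,B,C,D) = e_A \Gamma_{BCD} - e_B \Gamma_{ACD} + \Gamma_{BC}^E \Gamma_{AED} - \Gamma_{AC}^E \Gamma_{BED} - [e_A,e_B]^E \Gamma_{ECD}.
\]
For $\rho_B(e_1,e_2) = \Omega^B(e_1,e_2,e_1,e_2)$ the derivative terms $e_1\Gamma_{212} - e_2\Gamma_{112}$ assemble into a component of the transverse Riemann tensor $\Rm^{g^T}_{1212}$ (which matches $-\tfrac{1}{2}R^T$ in the paper's conventions, since $g^T$ is two-dimensional); the quadratic Christoffel products vanish because the required index pattern does not appear in Lemma \ref{Bismutchris}; and the bracket term, using $[e_1,e_2] = -F^{\alpha}_{12}e_\alpha$ together with ${}^B\Gamma_{\alpha 12} = F_{12\alpha}$, contributes $\sum_\alpha (F_{12\alpha})^2 = \tfrac{1}{2}\brs{F}^2$. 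Summing produces the first identity.

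For the mixed components $\rho_B(e_i, Z)$ and $\rho_B(e_i, W)$, invariance of the metric and of the connection form $\mu$ under $Z$ and $W$ forces $Z\Gamma = W\Gamma = 0$, while $[e_i,Z] = [e_i,W] = 0$ (Lemma \ref{adaptedframe}) kills the bracket term. The quadratic products again drop for index-pattern reasons. What remains is $e_i \Gamma_{Z12} = e_i F_{12Z}$; since $F^Z$ is type $(1,1)$ on the two-dimensional horizontal distribution $\HH$, the component $F_{12Z}$ equals $\tr_\omega F^Z$ at $p$ in the orthonormal horizontal frame, so $e_i F_{12Z} = (d\tr_\omega F^Z)_i$, and the analogous computation with $W$ in place of $Z$ handles the $F^W$ identities.

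Finally, for $\rho_B(Z,W) = \Omega^B(Z,W,e_1,e_2)$: invariance eliminates both derivative terms, $[Z,W] = 0$ kills the bracket term, and the two quadratic contributions $\Gamma_{W1}^E \Gamma_{ZE2}$ and $\Gamma_{Z1}^E \Gamma_{WE2}$ vanish termwise, because in each nonzero case the $F$-factors produced involve $F_{11\alpha}$ or $F_{22\alpha}$ which are zero by antisymmetry. The main technical obstacle throughout is the bookkeeping: carefully identifying which Christoffel products survive at $p$ under the adapted-frame gauge, and recognizing that the surviving pieces are precisely the geometric quantities $R^T$, $\brs{F}^2$, and $d\tr_\omega F^\alpha$ appearing in the statement.
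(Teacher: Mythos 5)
Your proposal is correct and follows essentially the same route as the paper's proof: contract $\rho_B$ in the adapted frame, discard the $\Omega(\cdot,\cdot,Z,W)$ contribution and the quadratic Christoffel terms using Lemma \ref{Bismutchris} together with the normalization of Remark \ref{r:frameremarks}, and identify the surviving pieces with $\Rm^{g^T}_{1212}$, $F_{12\ga}F_{12\ga}$, and $e_i(F_{12\ga}) = \left(d \tr_{\gw} F^{\ga}\right)_i$. The one phrase to tighten is your claim that the quadratic products in the mixed components drop ``for index-pattern reasons'': in terms such as $\gG_{Z1}^{\,k}\gG_{1k2}$ the first factor is a nonzero $F$-component, and it is the transverse Christoffel factor that vanishes at $p$ because of the gauge $e_i g(e_j,e_k)(p)=0$, exactly as in the paper's computation.
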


\subsection{Lie derivative operators}

\begin{lemma} \label{l:Leeformlemma}  Given a pluriclosed 
invariant Hermitian metric $g$
on a Sasaki-type complex surface $(M^4, J)$, $p \in M$, and an adapted frame at 
$p$, one has
\begin{align*}
\theta(e_i) =&\ 0,\\
\theta(Z) =&\ - \tr_{\gw} F^W,\\
\theta(W) =&\ \tr_{\gw} F^Z.
\end{align*}
\begin{proof} With respect to an arbitrary frame the Lee form can be expressed 
as
\begin{align*}
\theta_A =&\ - \tfrac{1}{2} (d^c \gw)_{BCD} J_A^B g^{CE} J_E^D .
\end{align*}
Comparing this against the result of Lemma \ref{torsionlemma}, it is clear that 
$\theta(e_i) = 0$.  Moreover,
\begin{align*}
\theta(Z) = - \tfrac{1}{2} (d^c \gw)_{W C D} g^{CE} J_E^D = - \tfrac{1}{2} 
\left\{ (d^c \gw)_{W 1 2} - (d^c \gw)_{W21} \right\} = - \tr_{\gw} F^W.
\end{align*}
Similarly
\begin{align*}
\theta(W) = \tfrac{1}{2} (d^c \gw)_{Z C D} g^{CE} J_E^D = \tfrac{1}{2} \left\{ 
(d^c \gw)_{Z 1 2} - (d^c \gw)_{Z21} \right\} = \tr_{\gw} F^Z,
\end{align*}
as required.
\end{proof}
\end{lemma}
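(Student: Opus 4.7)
The plan is to combine a standard algebraic identity for the Lee form with the explicit components of the Bismut torsion from Lemma \ref{torsionlemma}. Two equivalent starting points are available: either the formula $\theta_A = -\tfrac{1}{2}(d^c\omega)_{BCD} J_A{}^B g^{CE} J_E{}^D$, which is a re-expression of $\theta = d^*\omega \circ J$, or, since $\dim_{\mathbb C} M = 2$, the defining relation $d\omega = \theta \wedge \omega$. In either case the Lee form is determined purely algebraically from the torsion, whose only nonzero components in an adapted frame are $(d^c\omega)_{ij\alpha} = F_{ij\alpha}$.

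For $\theta(e_i) = 0$, the key observation is structural. Since $\psi$ is constant for a pluriclosed invariant metric by Lemma \ref{metriclemma20}, substituting into item (2) of that lemma gives $d\omega = \psi F^W \wedge \mu^Z - \psi \mu^W \wedge F^Z$, a three-form each of whose summands contains exactly one vertical covector. Writing $\omega = \omega^T + \psi\, \mu^W \wedge \mu^Z$ and comparing coefficients in $d\omega = \theta \wedge \omega$, any horizontal component of $\theta$ would produce a three-form with two vertical covectors, which is absent on the left; hence $\theta$ is purely vertical. Equivalently, in the $\theta_A$ formula with $A = e_i$, the factor $J_A{}^B$ lands in the horizontal subspace by Lemma \ref{l:Hermitianbundle}, and since both $g^{CE}$ and $J_E{}^D$ preserve the horizontal/vertical split, no nonvanishing torsion contribution survives.

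For the vertical components, I would match the coefficient of $e^1 \wedge e^2 \wedge \mu^Z$ in $d\omega = \theta \wedge \omega$ to extract $\theta(Z)$, and likewise of $e^1 \wedge e^2 \wedge \mu^W$ to extract $\theta(W)$. Invoking the definition of $\tr_\omega$ applied to the horizontal $(1,1)$-forms $F^Z, F^W$ then yields $\theta(Z) = -\tr_\omega F^W$ and $\theta(W) = \tr_\omega F^Z$. No serious obstacle is expected; the entire argument is bookkeeping. The only care required is with sign conventions for $JZ = W$, $Je_1 = e_2$, the definition of the Lee form, and the precise meaning of $\tr_\omega$ on a horizontal $(1,1)$-form. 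Once these are pinned down, the three formulas drop out by direct substitution into the torsion tensor computed in Lemma \ref{torsionlemma}.
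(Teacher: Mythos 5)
Your first suggested route---contracting $\theta_A = -\tfrac{1}{2}(d^c\gw)_{BCD}J_A^B g^{CE}J_E^D$ against the components computed in Lemma \ref{torsionlemma}---is exactly the paper's proof, and your block-diagonality observation (that $J$ and $g$ preserve the splitting $TM = \HH\oplus\VV$ while the only nonzero torsion components mix one vertical with two horizontal indices, so the contraction defining $\theta(e_i)$ has no surviving terms) is a correct, slightly more explicit justification of the step the paper dismisses as ``clear.'' Your second route, matching coefficients in $d\gw = \theta\wedge\gw$, is a genuinely different and equally legitimate argument, but it is special to complex dimension two and requires two pieces of bookkeeping that the paper's computation sidesteps: first, with $\psi$ merely constant (not normalized to $1$) the identity $d\gw = \psi\, F^W\wedge\mu^Z - \psi\,\mu^W\wedge F^Z$ produces a factor of $\psi$ that must be reconciled with the index-lowering convention $F_{ij\ga} = g_{\ga\gb}F^{\gb}_{ij}$ implicit in Lemma \ref{torsionlemma}; second, the sign in $d\gw = \theta\wedge\gw$ relative to the paper's conventions $\theta = d^*\gw\circ J$ and $\gw(X,Y) = g(X,JY)$, together with the identification of a horizontal $(1,1)$-form with $(\tr_{\gw^T}F)\,\gw^T$, must be pinned down before the stated signs $\theta(Z) = -\tr_\gw F^W$ and $\theta(W) = \tr_\gw F^Z$ drop out, since a naive match can flip both. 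You flag exactly these convention issues yourself, and once they are fixed the coefficient match does yield the lemma; so the proposal is correct, with its first branch coinciding with the paper's argument and its second offering a coordinate-free alternative at the cost of some convention-chasing.
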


\begin{lemma} \label{l:LeeformLDlemma}  Given a pluriclosed 
invariant Hermitian metric $g$
on a Sasaki-type complex surface $(M^4, J)$, $p \in M$, and an adapted frame at 
$p$, one has
\begin{align*}
\left(L_{\theta^{\sharp}} g\right) (e_i,e_j) =&\ 0,\\
\left(L_{\theta^{\sharp}} g\right) (e_i,Z) =&\ -  e_i (\tr_{\gw} F^W),\\
\left(L_{\theta^{\sharp}} g\right) (e_i,W) =&\ e_i (\tr_{\gw} F^Z),\\
\left(L_{\theta^{\sharp}} g\right) (e_{\ga},e_{\gb}) =&\ 0.
\end{align*}

\begin{proof} We first observe the general formula
\begin{align*}
(L_X g)(Y,Z) = X g(Y,Z) - g(L_X Y, Z) - g(Y, L_X Z).
\end{align*}
Using this and the properties of our adapted frame we see
\begin{align*}
(L_{\theta^{\sharp}} g) (e_i,e_j) =&\ \theta^{\sharp} g(e_i,e_j) - g([ 
(\tr_{\gw} F^Z) W - (\tr_{\gw} F^W) Z, e_i],e_j)\\
&\ - g(e_i, [ (\tr_{\gw} F^Z) W - (\tr_{\gw} F^W) Z, e_j])\\
=&\ 0.
\end{align*}
Next we have
\begin{align*}
(L_{\theta^{\sharp}} g) (e_i,Z) =&\ \theta^{\sharp} g(e_i,Z) - g([\tr_{\gw} 
F^Z) W - (\tr_{\gw} F^W) Z, e_i],Z)\\
&\ - g(e_i, [(\tr_{\gw} F^Z) W - (\tr_{\gw} F^W) Z, Z])\\
=&\ - e_i (\tr_{\gw} F^W).
\end{align*}
Similarly
\begin{align*}
(L_{\theta^{\sharp}} g) (e_i,W) =&\ \theta^{\sharp} g(e_i,W) - g([\tr_{\gw} 
F^Z) W - (\tr_{\gw} F^W) Z, e_i],W)\\
&\ - g(e_i, [ (\tr_{\gw} F^Z) W - (\tr_{\gw} F^W) Z, W])\\
=&\ e_i (\tr_{\gw} F^Z).
\end{align*}
Lastly, using the invariance properties and the general formula for $L_X 
g(Y,Z)$ above it is clear that $L_{\theta^{\sharp}} g(Z,Z) = 
L_{\theta^{\sharp}} g(W,W) = L_{\theta^{\sharp}} g(Z,W) = 0$.
\end{proof}
\end{lemma}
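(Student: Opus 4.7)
The plan is to apply the general Lie derivative identity
\begin{align*}
(L_X g)(Y_1,Y_2) = X g(Y_1,Y_2) - g([X,Y_1],Y_2) - g(Y_1,[X,Y_2])
\end{align*}
with $X = \theta^{\sharp}$, exploiting the invariance properties of the metric together with the commutation structure of the adapted frame. The first step is to extract an explicit formula for $\theta^{\sharp}$. Combining Lemma~\ref{l:Leeformlemma} with the fact from Lemma~\ref{metriclemma20} that $\psi = g(Z,Z) = g(W,W)$ is constant in the pluriclosed case, together with $g(e_i,e_{\ga}) = 0$, one identifies
\begin{align*}
\theta^{\sharp} = \psi^{-1}\bigl[(\tr_{\gw} F^Z) W - (\tr_{\gw} F^W) Z\bigr],
\end{align*}
a purely vertical vector field whose coefficients are invariant horizontal functions.

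The key intermediate step is a pair of bracket computations. Since $[Z,e_i] = [W,e_i] = 0$ by Lemma~\ref{adaptedframe}, since $Z$ and $W$ annihilate any invariant function, and since $\psi$ is constant, a single application of the Leibniz rule $[\phi Y_1, Y_2] = \phi[Y_1,Y_2] - Y_2(\phi) Y_1$ gives
\begin{align*}
[\theta^{\sharp}, e_i] = \psi^{-1}\bigl[e_i(\tr_{\gw} F^W) Z - e_i(\tr_{\gw} F^Z) W\bigr], \qquad [\theta^{\sharp}, Z] = [\theta^{\sharp}, W] = 0.
\end{align*}
Thus $[\theta^{\sharp},e_i]$ is purely vertical, while $\theta^{\sharp}$ commutes with each vertical generator.

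With these identities in hand, each entry of $L_{\theta^{\sharp}} g$ reduces to an essentially algebraic check. For $(e_i,e_j)$ the derivative term vanishes since $g(e_i,e_j) = g^T(e_i,e_j)$ is invariant, and both bracket terms vanish by pairing a vertical vector with a horizontal one. For $(e_i,Z)$ the first term vanishes since $g(e_i,Z) \equiv 0$, the second vanishes because $[\theta^{\sharp},Z] = 0$, and the remaining term produces $-\psi^{-1} e_i(\tr_{\gw} F^W) g(Z,Z) = -e_i(\tr_{\gw} F^W)$ after the $\psi$ factors cancel; the $(e_i,W)$ case follows identically after exchanging the roles of $Z$ and $W$. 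For $(e_{\ga},e_{\gb})$ all three terms vanish: the relevant inner products are either $0$ or the constant $\psi$, and both $[\theta^{\sharp},e_{\ga}]$ and $[\theta^{\sharp},e_{\gb}]$ are zero.

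I do not anticipate any real obstacle here; the bookkeeping is the only substantive concern. The essential inputs are the constancy of $\psi$ in the pluriclosed case (which is what simultaneously makes $[\theta^{\sharp},Z] = [\theta^{\sharp},W] = 0$ and produces the clean $\psi$-cancellation in the mixed entry) and the commutation relations of the adapted frame. In effect the lemma records that for a pluriclosed invariant metric the Lee-field Lie derivative is captured entirely by the horizontal derivatives of the two curvature traces $\tr_{\gw} F^Z$ and $\tr_{\gw} F^W$.
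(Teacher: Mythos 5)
Your proposal is correct and follows essentially the same route as the paper: apply the general formula for $(L_X g)(Y_1,Y_2)$ with $X = \theta^{\sharp}$ identified from Lemma \ref{l:Leeformlemma} as a vertical field with invariant coefficients, then use the adapted-frame commutation relations to reduce each entry to the bracket terms. Your explicit tracking of the constant factor $\psi^{-1}$ in $\theta^{\sharp}$ and its cancellation against $g(Z,Z)=g(W,W)=\psi$ is in fact slightly more careful than the paper's write-up, but the argument is the same.
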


\begin{lemma} \label{l:Hessiandecomp}  Given a pluriclosed
invariant Hermitian metric $g$
on a Sasaki-type complex surface $(M^4, J)$, $p \in M$, and an adapted frame at 
$p$, one has for $f$ an invariant function,
\begin{align*}
\N^2 f(e_i,e_j) =&\ (\N^T)^2 f (\del_{x_i}, \del_{x_j}),\\
\N^2 f(e_i,e_{\ga})=&\ - \tfrac{1}{2} (\N^T)^k f F_{ik \ga},\\
\N^2 f(e_{\ga},e_{\gb}) =&\ 0.
\end{align*}
\begin{proof} The result follows directly from the general formula
\begin{align*}
(\N^2 f)(e_A, e_B) =&\ f_{,AB} - \gG_{AB}^C f_{,C},
\end{align*}
and comparison against Lemma \ref{Christoffel}.
\end{proof}
\end{lemma}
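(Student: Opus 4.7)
My plan is to compute each block of $\N^2 f$ in the adapted frame directly from the general formula
\[
(\N^2 f)(e_A, e_B) = e_A(e_B f) - \gG_{AB}^C\, e_C f,
\]
combining the invariance hypothesis on $f$ with the Christoffel symbol table in Lemma \ref{Christoffel}. The key reduction is that invariance kills both first derivatives of $f$ in the vertical directions and second derivatives in which a vertical vector appears on the outside.

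The first observation I would record is that since $f$ is invariant, $Z f = W f = 0$, so $f_{,\ga} = 0$ for every vertical index $\ga$. This immediately collapses many of the Christoffel contractions in the formula above. Combined with the explicit expression $e_i = \del_{x_i} - \mu^Z(\del_{x_i}) Z - \mu^W(\del_{x_i}) W$ from Lemma \ref{adaptedframe}, invariance also yields $e_i f = \del_{x_i} f$ and $e_i(e_j f) = \del_{x_i}\del_{x_j} f$ at $p$, so horizontal frame derivatives of $f$ agree with the corresponding transverse coordinate derivatives.

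For the horizontal-horizontal block, the only surviving Christoffel contribution in $-\gG_{ij}^C f_{,C}$ is $-\gG_{ij}^k f_{,k}$, and by Lemma \ref{Christoffel} these agree with the transverse Christoffel symbols of $g^T$; reassembling with the previous paragraph's identification yields exactly $(\N^T)^2 f(\del_{x_i}, \del_{x_j})$. For the mixed block, invariance gives $e_i(e_\ga f) = 0$, so only $-\gG_{i\ga}^k f_{,k}$ contributes; Lemma \ref{Christoffel} gives $\gG_{i\ga j} = \tfrac{1}{2} F_{ij\ga}$, and after raising an index and relabeling one recovers $-\tfrac{1}{2}(\N^T)^k f\, F_{ik\ga}$. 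For the vertical-vertical block, $e_\ga(e_\gb f) = 0$ and every Christoffel symbol of the form $\gG_{\ga\gb}^C$ vanishes by Lemma \ref{Christoffel}, giving zero.

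There is essentially no serious obstacle in this argument; the only mild bookkeeping is to check that raising horizontal indices with $g^{-1}$ agrees with raising by $(g^T)^{-1}$ at $p$ (which follows from the block-diagonal form of $g$ in the adapted frame, so that the horizontally supported covector $df$ gives the same gradient under either metric), and to track the sign and factor of $\tfrac{1}{2}$ through the antisymmetry of $F$ in its horizontal indices.
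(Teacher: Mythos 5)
Your proposal is correct and follows exactly the paper's route: it applies the general formula $(\N^2 f)(e_A,e_B) = f_{,AB} - \gG_{AB}^C f_{,C}$ together with the Christoffel table of Lemma \ref{Christoffel}, with the invariance of $f$ killing the vertical derivatives. The extra details you supply (e.g.\ $e_i(e_j f) = \del_{x_i}\del_{x_j} f$ via $[Z,\del_{x_j}]=[W,\del_{x_j}]=0$, and the block-diagonal index raising) are precisely the bookkeeping the paper leaves implicit.
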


\section{Invariant metrics and pluriclosed flow} \label{s:invPCF}

In this section we investigate the pluriclosed flow in the setting of invariant 
metrics on Sasaki-type complex surfaces.  First in Proposition 
\ref{p:invpreserved} we show that invariance is preserved by the flow 
equations.  Building on this we show in Proposition \ref{p:flowreduction} how 
to 
decompose the pluriclosed flow equations into a flow of a transverse metric and 
a Hermitian connection form.

\begin{prop} \label{p:invpreserved}  Let $(M^{2n}, 
g, J)$ admit a holomorphic 
Killing field $X$.  Let 
$g_t$ be the
solution to pluriclosed flow flow with this initial condition.  Then
 $X$ is a Killing field for $g_t$ for all $t \geq 0$.
\begin{proof} Since the $J$ is fixed by pluriclosed flow, $X$ certainly remains 
holomorphic.  To show $X$ remains a Killing field it is thus equivalent to show 
that $L_{X} \gw_t \equiv 0$ along the flow.  First note that as $\gw_t \in 
\Lambda^{1,1}$ is pluriclosed and $X$ is holomorphic, it follows that $L_{X} 
\gw_t \in 
\Lambda^{1,1}$, and is also pluriclosed.  We note that the linearization of 
$-\rho_B^{1,1}$ acting on pluriclosed
$(1,1)$ tensors is a linear elliptic operator 
with symbol that of the Laplacian (cf \cite{PCF} Proposition 3.1), which we 
denote $\mathcal L$.  We thus derive a heat equation for $L_X 
\gw_t$,
\begin{align*}
 \dt L_X \gw =&\ L_X (- \rho_B^{1,1})\\
 =&\ [D (- \rho_B^{1,1})] (L_X \gw)\\
 =&\ \mathcal L (L_X \gw).
\end{align*}
It follows from a standard argument that the condition $L_{X} \gw \equiv 0$ is 
preserved by uniqueness of solutions to this linear parabolic system.
\end{proof}
\end{prop}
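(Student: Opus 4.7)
My plan is to show that the tensor $L_X \omega_t$ satisfies a linear parabolic equation with zero initial data, so that uniqueness forces it to remain zero. The first observation is that since $J$ is fixed by pluriclosed flow, the holomorphicity of $X$ is preserved throughout the flow without any work. Thus it suffices to show that $L_X \omega_t \equiv 0$ for all $t$, since this is equivalent (via $g_t = \omega_t(\cdot, J\cdot)$ and $L_X J = 0$) to $L_X g_t \equiv 0$.

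Next I would record the structural properties of $L_X \omega_t$. Because $X$ is holomorphic, Lie differentiation by $X$ commutes with $\partial$ and $\bar\partial$, so $L_X$ preserves bidegree: $L_X \omega_t$ is again a $(1,1)$-form, and moreover $\i \partial \bar\partial (L_X \omega_t) = L_X (\i \partial \bar\partial \omega_t) = 0$, i.e.\ $L_X \omega_t$ is pluriclosed. So $L_X \omega_t$ lives in exactly the space on which the linearization of pluriclosed flow acts nicely.

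The key computation is then to derive a heat equation. Since $X$ is holomorphic and $J$ is fixed, $L_X$ commutes with the nonlinear operator $\omega \mapsto -\rho_B^{1,1}(\omega)$ viewed as a natural operator in the Hermitian data, so
\begin{align*}
\dt L_X \omega_t = L_X \left( -\rho_B^{1,1}(\omega_t)\right) = \left[ D(-\rho_B^{1,1})\right](L_X \omega_t) = \mathcal L(L_X \omega_t),
\end{align*}
where $\mathcal L$ denotes the linearization of $-\rho_B^{1,1}$ at $\omega_t$ acting on pluriclosed $(1,1)$-forms. By \cite{PCF} Proposition 3.1, $\mathcal L$ is a linear elliptic operator with the symbol of the Laplacian, so the equation above is a linear, strictly parabolic system for $L_X \omega_t$.

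To finish, I note that at $t=0$ we have $L_X \omega_0 = 0$ because $X$ is Killing with respect to $g_0$ (and $J$ is integrable and $X$-invariant). Uniqueness of solutions to the linear parabolic system then forces $L_X \omega_t \equiv 0$ for all $t \geq 0$. I do not expect any substantive obstacle here: the only mildly delicate point is confirming that $L_X$ truly commutes with the operator $\rho_B^{1,1}(\cdot)$, which is immediate from the naturality of the Bismut connection together with $L_X J = 0$ and $L_X \omega_t$ being the variation of $\omega_t$ along a family of biholomorphisms generated by $X$.
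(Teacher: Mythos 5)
Your proposal is correct and follows essentially the same argument as the paper: derive the linear parabolic equation $\dt L_X \gw = \mathcal L(L_X \gw)$ from the naturality of $\rho_B^{1,1}$ and the holomorphicity of $X$, invoke (\cite{PCF} Proposition 3.1) for ellipticity, and conclude by uniqueness with zero initial data.
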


In view of this and Proposition \ref{p:KKmetric}, we expect the 
pluriclosed flow to reduce to a flow of triples $(g^T, \mu, f)$.  Although, 
since all of the metrics are pluriclosed, by Lemma \ref{metriclemma20} we 
expect $f$ to remain constant, a fact reflected by the vanishing of the Bismut 
curvature in these directions as in Proposition \ref{STrhoB}.
To confirm this we need a preliminary lemma indicating how to vary Hermitian 
connection forms in a 
manner analogous to varying a Hermitian metric on a vector bundle and producing 
the associated Hermitian connections.

\begin{lemma} \label{l:Hermconnvar}  Given a 
Sasaki-type complex surface $(M^4, 
J)$, a Hermitian 
connection form $\mu \in \Lambda^1(\mathfrak t^2)$, and $f_1, f_2$ 
invariant functions, the form
\begin{align*}
\mu_{f} := \left( \mu^Z + d^c f_1 + d f_2 \right) \otimes Z + \left( \mu^W + 
d^c 
f_2 - d f_1 \right) \otimes W
\end{align*}
is Hermitian connection.
\begin{proof} Since the $f_i$ are $\{Z,W\}$-invariant, conditions (1) and (3) 
of 
Definition \ref{Hcl} follow.  To check condition (2) we compute
\begin{align*}
\mu_f(JX) - J_{\mathfrak t^2} \mu(X)=&\ \mu (JX) - J_{\mathfrak t^2} \mu(X) + 
\left( d^c f_1(JX) + 
d 
f_2(JX) \right) Z + \left( d^c f_2(JX) - d f_1(JX) \right) W\\
&\ - J_{\mathfrak t^2} \left( \left(d^c f_1(X) + d f_2(X) \right) Z + \left( 
d^c f_2(X) - 
d 
f_1(X) \right) W \right)\\
=&\ \left( df_1(X) - d f_1(X) + d f_2(J X) + d^c f_2(X) \right) Z\\
&\ + \left( d^c f_2(JX) - df_2(X) - d f_1(JX) - d^c f_1(X) \right) W\\
=&\ 0,
\end{align*}
since $d^cf(X) = - df(JX)$.
\end{proof}
\end{lemma}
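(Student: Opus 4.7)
The plan is to verify the three defining conditions of a Hermitian connection form in Definition \ref{Hcl} directly, with the main work being condition (2), the $J$-equivariance.

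First I would dispose of the invariance condition (3). Since the $f_i$ are invariant in the sense $L_Z f_i = L_W f_i = 0$, and since exterior differentiation commutes with Lie derivatives, both $df_i$ and $d^c f_i = -df_i \circ J$ are $\{Z,W\}$-invariant (using that $J$ is invariant from Proposition \ref{p:cylinderdef}). Combined with the hypothesis $L_Z \mu = L_W \mu = 0$, this gives $L_Z \mu_f = L_W \mu_f = 0$.

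Next I would verify condition (1). Since $f_i$ is invariant, $df_i(Z) = Zf_i = 0$ and $df_i(W) = Wf_i = 0$. Using $d^c f_i(X) = -df_i(JX)$ together with $JZ = W$ and $JW = -Z$, one gets $d^c f_i(Z) = -df_i(W) = 0$ and $d^c f_i(W) = df_i(Z) = 0$. Hence the correction terms $d^c f_1 + df_2$ and $d^c f_2 - df_1$ vanish on both $Z$ and $W$, so $\mu_f^Z$ and $\mu_f^W$ agree with $\mu^Z$ and $\mu^W$ on $\mathcal V = \langle Z, W\rangle$, and condition (1) is inherited from $\mu$.

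The main step is condition (2). Writing the correction as $\beta(X) := (d^c f_1(X) + df_2(X))\,\xi_1 + (d^c f_2(X) - df_1(X))\,\xi_2$, since $\mu$ already satisfies $\mu(JX) = J_{\mathfrak t^2}\mu(X)$, it suffices to show $\beta(JX) = J_{\mathfrak t^2}\beta(X)$. Using the identity $d^c f(JX) = -df(J^2 X) = df(X)$ (and correspondingly $df(JX) = -d^c f(X)$), we compute
\begin{align*}
\beta(JX) &= (df_1(X) - d^c f_2(X))\,\xi_1 + (df_2(X) + d^c f_1(X))\,\xi_2,\\
J_{\mathfrak t^2}\beta(X) &= (d^c f_1(X) + df_2(X))\,\xi_2 - (d^c f_2(X) - df_1(X))\,\xi_1,
\end{align*}
and these agree term by term. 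This completes the verification; there is no serious obstacle, as the algebraic shape of the corrections $(d^c f_1 + df_2, d^c f_2 - df_1)$ has been chosen precisely to be equivariant under the pairing of $J$ on $TM$ with $J_{\mathfrak t^2}$ on $\mathfrak t^2$.
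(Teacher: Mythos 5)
Your proposal is correct and follows essentially the same route as the paper: conditions (1) and (3) are dispensed with via the invariance of the $f_i$ (you simply spell out the evaluations $df_i(Z)=df_i(W)=d^cf_i(Z)=d^cf_i(W)=0$ that the paper leaves implicit), and condition (2) is the same direct algebraic check using $d^c f(X) = -df(JX)$, which you organize by isolating the correction term $\beta$ rather than expanding $\mu_f(JX)-J_{\mathfrak t^2}\mu_f(X)$ wholesale. No gaps.
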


\begin{prop} \label{p:flowreduction} Given a Sasaki-type complex surface $(M^4, 
J)$ and $\mu$ a Hermitian connection form, suppose $(g^T, 
f_1, f_2)$ is a one-parameter family of invariant transverse metrics and 
functions 
satisfying
\begin{gather} \label{f:reducedPCF}
\begin{split}
\frac{\del g^T}{\del t} =&\ - \tfrac{1}{2} \left( R^{T} - \brs{F_{\mu_f}}^2 
\right) g^T,\\
\dt f_1 =&\ \tfrac{1}{2} \tr_{\gw^T} F^{Z}_{\mu_f} = \tfrac{1}{2} \gD_{g_T} f_1 
+ \tfrac{1}{2} \tr_{\gw^T} F^Z_{\mu},\\
\dt f_2 =&\ \tfrac{1}{2} \tr_{\gw^T} F^{W}_{\mu_f} = \tfrac{1}{2} \gD_{g_T} f_2 
+ \tfrac{1}{2} \tr_{\gw^T} F^W_{\mu}.
\end{split}
\end{gather}
 Then the one-parameter of associated metrics $g_t = g(g^T, \mu_f,1)$ is a 
solution to pluriclosed flow.
\begin{proof} It suffices to show that the associated family of K\"ahler forms 
evolves by $- \rho_B^{1,1}$.  To that end we fix an adapted frame at some 
point $(p,t) \in M \times \{t\}$ and compute
\begin{align*}
 \dt \left( \gw_g \right)_{12} =&\ - \dt (g^T)_{11}\\
 =&\ \tfrac{1}{2} \left( R^T - \brs{F_{\mu}}^2 \right) g^T_{11}\\
 =&\ - (\rho_B^{1,1})_{12},
 \end{align*}
where the last line follows by comparing against Proposition \ref{STrhoB}.  
Next we compute
 \begin{align*}
  \dt \left( \gw_g \right)_{14} =&\ \dt \left( g_{13} J_4^3 \right) = - \dt 
g_{13} = - \dt (\mu_f^Z)_1\\
  =&\ - \tfrac{1}{2} (d^c \tr_{\gw^T} F^Z)_1 - \tfrac{1}{2}  (d \tr_{\gw^T} 
F^W)_1\\
  =&\ \tfrac{1}{2} (d \tr_{\gw^T} F^Z)_2 - \tfrac{1}{2} (d \tr_{\gw^T} F^W)_1\\
  =&\ \tfrac{1}{2} (\rho_B)_{23} - \tfrac{1}{2} (\rho_B)_{14}\\
  =&\ - (\rho_B^{1,1})_{14}.
 \end{align*}
Similarly we have
\begin{align*}
  \dt \left( \gw_g \right)_{13} =&\ \dt \left( g_{14} J_3^4 \right) = \dt 
g_{14} = \dt (\mu_f^W)_1\\
  =&\ \tfrac{1}{2} (d^c \tr_{\gw} F^W)_1 - \tfrac{1}{2} (d \tr_{\gw} F^Z)_1\\
  =&\ - \tfrac{1}{2} (d \tr_{\gw} F^W)_2 - \tfrac{1}{2} (d \tr_{\gw} F^Z)_1\\
  =&\ - \tfrac{1}{2} (\rho_B)_{24} - \tfrac{1}{2} (\rho_B)_{13}\\
  =&\ - (\rho_B^{1,1})_{13}.
\end{align*}
The proposition follows.
\end{proof}
\end{prop}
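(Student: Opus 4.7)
The plan is to verify the pluriclosed flow identity $\dt \gw_g = -\rho_B^{1,1}$ pointwise for the family $g_t = g(g^T, \mu_f, 1)$. Since $g_t$ is invariant, both sides are invariant $(1,1)$-forms, so it suffices to check the equality in an adapted frame (Lemma \ref{adaptedframe}) at an arbitrary point $(p, t)$. The critical feature of such a frame, taken relative to the metric $g(g^T,\mu_f,1)$, is that $\mu_f(e_i)(p, t) = 0$ for $i = 1, 2$, so terms where $\mu_f$ is evaluated on some $e_i$ drop out of pointwise algebraic identities at $p$, although their $t$-derivatives do not.

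By Definition \ref{d:transKF} and equation (\ref{f:vertKF}) with $\psi \equiv 1$, one has the decomposition $\gw_g = \gw^T + \mu_f^W \wedge \mu_f^Z$, which organizes the verification into three cases. For the horizontal component, $\gw_g(e_1, e_2)(p) = -g^T_{11}$; differentiating via the first PDE of (\ref{f:reducedPCF}) and using $g^T_{11} = 1$ in the orthonormal adapted frame produces $\tfrac{1}{2}(R^T - \brs{F_{\mu_f}}^2)$, and by Proposition \ref{STrhoB} this equals $-\rho_B(e_1, e_2) = -(\rho_B^{1,1})_{12}$. For the vertical component, $\gw_g(Z, W)$ is constant in $t$ and $\rho_B(Z, W) = 0$ by Proposition \ref{STrhoB}, so there is nothing to check.

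The substantive case is the mixed components $\gw_g(e_i, e_{\ga})$, which at $p$ reduce to $\pm \mu_f^Z(e_i)$ or $\pm \mu_f^W(e_i)$. By Lemma \ref{l:Hermconnvar}, $\dt \mu_f^Z = d^c \dt f_1 + d \dt f_2$ and $\dt \mu_f^W = d^c \dt f_2 - d \dt f_1$. Substituting the scalar PDEs from (\ref{f:reducedPCF}), applying the identity $d^c h(X) = -dh(JX)$, and comparing against the mixed components of $\rho_B$ from Proposition \ref{STrhoB} yields the required equalities after accounting for the $(1,1)$-projection $(\rho_B^{1,1})_{AB} = \tfrac{1}{2}(\rho_B(e_A, e_B) + \rho_B(Je_A, Je_B))$. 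The main obstacle is the sign and index bookkeeping in this step: the specific combinations $d^c f_1 + d f_2$ and $d^c f_2 - d f_1$ appearing in $\mu_f$ are exactly what is needed so that $\dt \mu_f^Z$ and $\dt \mu_f^W$ pair with the correct $d$- and $d^c$-parts arising from the $(1,1)$-projection of $d \tr_{\gw^T} F^Z$ and $d \tr_{\gw^T} F^W$; once this is tracked faithfully, each component matches.
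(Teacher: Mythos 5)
Your proposal is correct and follows essentially the same route as the paper: an adapted-frame, component-by-component verification using the decomposition $\gw_g = \gw^T + \mu_f^W \wedge \mu_f^Z$, the variation formula of Lemma \ref{l:Hermconnvar} for $\dt \mu_f$, and comparison of each component against the Bismut--Ricci curvature computed in Proposition \ref{STrhoB}, with the $(1,1)$-projection handled exactly as in the paper's displayed computations. The only difference is presentational: you state the mixed-component sign bookkeeping in summary form rather than writing it out, but the ingredients you list are precisely those the paper uses.
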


\begin{rmk} \label{r:RYM} 
Let us consider the standard Sasakian structure on $S^3$ generated by the Hopf 
action on $\mathbb C^2$, where the resulting complex surface is the standard 
diagonal Hopf surface, which is a principal $T^2$ fibration over $\mathbb 
{CP}^1$.  One can easily compute that the curvature tensors $F^Z$ and $F^W$ 
both evolve by the Hodge Laplacian heat flow, i.e. we have the system of 
equations on $\mathbb{CP}^1$,
\begin{align*}
\dt g =&\ - \tfrac{1}{2} R_g g + \tfrac{1}{2} \brs{F}^2_g g = - \Rc_g + F^2,\\
\dt F =&\ - \tfrac{1}{2} \gD_d F.
\end{align*}
where $F^2_{ij} = F_{ik} F_{jk}$.  These equations are, up to immaterial global 
scaling factors, a natural coupling of the Ricci and Yang-Mills flows 
introduced independently by the author \cite{Streetsthesis} and Young 
\cite{Youngthesis}.  These equations result from studying the Ricci flow of an 
invariant metric on a principal bundle, but fixing the metric on the fibers.  
While freezing the bundle metric may seem natural from a Yang-Mills point of 
view, it is arguably unnatural from the point of view of the geometry of the 
total space (cf. \cite{LottDR} for a discussion of the Ricci flow of an 
invariant 
metric on a principal bundle).  Thus it is somewhat surprising that the 
pluriclosed flow, defined on general complex manifolds with no symmetry 
considerations in mind, should naturally freeze the metric on the fibers in 
this invariant setting.
\end{rmk}

\begin{prop} \label{p:solitonreduction} An invariant pluriclosed Hermitian 
metric $g = g(g^T,\mu)$ on a Sasaki-type complex surface is a steady soliton 
with defining function $f$ if 
and only if $f$ is invariant and there exists $\gl \in \mathbb R$
such that
\begin{gather} \label{f:solitonreduction}
\begin{split}
  (R^T - \brs{F^Z}^2) g^T + L_{\N f} g^T =&\ 0,\\
e^{-f} \tr_{\gw^T} F^Z =&\ \gl_1,\\
F^W =&\ 0.
\end{split}
\end{gather}
\begin{proof} 
 To see that $f$ is invariant, we compute using Proposition \ref{STrhoB} and 
Lemma \ref{l:LeeformLDlemma}
 \begin{align*}
 0 =&\ \rho_B^{1,1}(JZ, Z) - \tfrac{1}{2} L_{\theta^{\sharp}} g (Z,Z) + 
\tfrac{1}{2} L_{\N f} g(Z,Z)\\
 =&\ \N^2 f(Z,Z)\\
 =&\ Z (Z f) - (\N_Z Z) f\\
 =&\ Z (Z f),
 \end{align*}
where the last line follows since $Z$ is a constant length Killing field, hence 
$\N_Z Z = \grad \brs{Z}^2 = 0$.  We integrate this against $e^{-f} dV_g$ to 
yield
 \begin{align*}
 0 =&\ \int_M Z (Z f) e^{-f} dV_g\\
 =&\ \int_M \left\{ Z \left( e^{-f} Zf \right) + e^{-f} (Z f)^2 \right\} dV_g\\
 =&\ \int_M L_Z \left( e^{-f} Zf dV_g \right) + \int_M (Zf)^2 e^{-f} dV_g\\
 =&\ \int_M (Z f)^2 e^{-f} dV_g.
 \end{align*}
 Thus $Z f \equiv 0$, and similarly $W f \equiv 0$.
 
Now note that as a consequence of the definition of soliton 
and Proposition \ref{p:BismutGR} we see that
\begin{align*}
0 = \Rc - \tfrac{1}{4} H^2 + \tfrac{1}{2} L_{\N f} g =&\ \rho^{1,1}_B(J \cdot, 
\cdot) - \tfrac{1}{2} L_{\theta^{\sharp}} g + \tfrac{1}{2} L_{\N f} g.
\end{align*}
 Pairing this equation against two horizontal vectors and using Proposition 
\ref{STrhoB} and Lemmas \ref{l:LeeformLDlemma} and \ref{l:Hessiandecomp} yields 
the 
first equation of (\ref{f:solitonreduction}).  Next fix a horizontal vector 
$e_i$ and note that the above equation again in conjunction with Proposition 
\ref{STrhoB} and Lemmas \ref{l:LeeformLDlemma} and \ref{l:Hessiandecomp} implies
 \begin{align*}
 0 =&\ \rho^{1,1}_B(JZ, e_i) - \tfrac{1}{2} L_{\theta^{\sharp}} g (Z,e_i) + 
\tfrac{1}{2} L_{\N f} g(Z,e_i)\\
 =&\ - \rho_B^{1,1}(e_i,W) + \tfrac{1}{2} e_i (\tr_{\gw} F^W) - \tfrac{1}{2} 
\N_k f F^Z_{ik}\\
 =&\ - \tfrac{1}{2} e_i \hook (d^c \tr_{\gw^T} F^Z) + \tfrac{1}{2} e_i \hook 
(\N f \hook F^Z),
 \end{align*}
So that
 \begin{align} \label{f:solitonFform}
0 =&\ - d^c \tr_{\gw^T} F^Z + \N f \hook F^Z.
 \end{align}
Let us expand this component wise with respect to an adapted frame to observe 
the equations
\begin{align*}
0 =&\ \left( - d^c \tr_{\gw^T} F^Z + \N f \hook F^Z \right)_1\\
=&\ e_2 \tr_{\gw^T} F^Z + e_2 f F_{21}\\
=&\ e^{f} e_2 \left( e^{-f} \tr_{\gw^T} F^Z \right),
\end{align*}
and similarly
\begin{align*}
0 =&\ \left( - d^c \tr_{\gw^T} F^Z + \N f \hook F^Z \right)_2\\
=&\ - e_1 \tr_{\gw^T} F^Z + e_1 f F_{12}\\
=&\ - e^f e_1 \left( e^{-f} \tr_{\gw^T} F^Z \right).
\end{align*}
Hence
\begin{align}
e^{-f} \tr_{\gw^T} F^Z \equiv \mbox{constant} = \gl,
\end{align}
which is the second equation of (\ref{f:solitonreduction}).

 A similar argument shows that
 $e^{-f} \tr_{\gw^T} F^W = \mbox{constant} = \gl'$. Since by construction $W$ 
is tangent to the product $S^1$ action over $S^3$, it follows that $F^W = d a$ 
for an invariant $1$-form $a$.  If $\gl' \neq 0$, then we would obtain $\gw^T = 
\frac{e^{-f}}{\gl'} d a$, contradicting that $\gw^T$ is positive definite.  
Thus $\gl' = 0$, and this implies $F^W = 0$.

\end{proof}
\end{prop}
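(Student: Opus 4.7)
The plan is to rewrite the Riemannian soliton equation using Proposition \ref{p:BismutGR} in the form
\begin{align*}
\rho_B^{1,1}(J\cdot,\cdot) - \tfrac{1}{2} L_{\theta^{\sharp}} g + \tfrac{1}{2} L_{\N f} g = 0,
\end{align*}
and then evaluate this pointwise against each pair of vertical and horizontal vectors in an adapted frame, using the explicit formulas of Proposition \ref{STrhoB}, Lemma \ref{l:LeeformLDlemma}, and Lemma \ref{l:Hessiandecomp}. This reduces the tensor identity to a short list of scalar equations on the transverse data that will match (\ref{f:solitonreduction}).

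First I would establish invariance of $f$. Pairing against $(Z,Z)$: by Proposition \ref{STrhoB} the Bismut-Ricci contribution vanishes since $\rho_B(Z,W)=0$, and by Lemma \ref{l:LeeformLDlemma} the $\theta^{\sharp}$ piece vanishes, leaving $\N^2 f(Z,Z) = 0$. Since $Z$ is a unit Killing field, $\N_Z Z = \tfrac{1}{2}\grad |Z|^2 = 0$, so the equation reduces to $Z(Zf) = 0$. Multiplying by $e^{-f}$ and integrating against $dV_g$, the divergence term integrates to zero on the compact manifold (as $Z$ is Killing, hence divergence-free), leaving $\int_M (Zf)^2 e^{-f}\, dV_g = 0$ and so $Zf \equiv 0$. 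The same argument with $W$ gives $Wf \equiv 0$.

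Next, evaluating on two horizontal vectors $(e_i, e_j)$: Lemma \ref{l:LeeformLDlemma} kills the $\theta^{\sharp}$ piece, Lemma \ref{l:Hessiandecomp} reduces the Hessian to its transverse part, and Proposition \ref{STrhoB} gives the Bismut-Ricci contribution as a scalar multiple of $g^T$ coming from $\tfrac{1}{2}(-R^T + |F|^2)$. Rearranging yields $(R^T - |F|^2)g^T + L_{\N f} g^T = 0$, which becomes the first equation of (\ref{f:solitonreduction}) once $F^W = 0$ is established. Evaluating on a mixed pair $(Z, e_i)$ and combining Proposition \ref{STrhoB} for $\rho_B(e_i,W)$ with Lemma \ref{l:LeeformLDlemma} produces the one-form identity
\begin{align*}
0 = -d^c \tr_{\gw^T} F^Z + \N f \hook F^Z.
\end{align*}
Expanding in the adapted frame and using $F^Z = F_{12Z}\, e^1 \wedge e^2$ rewrites each component as $\pm e^f\, e_i(e^{-f} \tr_{\gw^T} F^Z) = 0$, which together with $Zf = Wf = 0$ forces $e^{-f} \tr_{\gw^T} F^Z \equiv \lambda$, the second equation of (\ref{f:solitonreduction}). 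The symmetric pairing against $(W,e_i)$ gives the analogous constancy $e^{-f}\tr_{\gw^T} F^W \equiv \lambda'$.

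The main obstacle is promoting this last constancy to $F^W \equiv 0$, which is a global rather than pointwise assertion. Here I would invoke the structure specific to the Sasaki-type construction: $W = \partial/\partial t$ descends on the compact quotient to a Killing generator of a circle action, so $F^W$ is exact, $F^W = da$ for an invariant $1$-form $a$. If $\lambda' \neq 0$, the relation $\tr_{\gw^T} F^W = \lambda' e^f$ expresses the positive transverse form $\gw^T$ as a nonzero scalar multiple of the exact form $da$, which is incompatible with positivity on a compact complex surface, since the integral of $\gw^T$ over a transverse slice is strictly positive while $da$ integrates to zero. Hence $\lambda' = 0$ and $F^W \equiv 0$, which upgrades the horizontal-horizontal equation to the first line of (\ref{f:solitonreduction}). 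The converse direction is immediate: substituting (\ref{f:solitonreduction}) and invariance of $f$ back into the frame-by-frame evaluations shows every component of the displayed identity vanishes, and Proposition \ref{p:BismutGR} then recovers the soliton equation.
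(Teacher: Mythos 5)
Your proposal is correct and follows essentially the same route as the paper: rewrite the soliton equation via Proposition \ref{p:BismutGR}, evaluate it in an adapted frame against $(Z,Z)$, horizontal pairs, and mixed pairs using Proposition \ref{STrhoB} and Lemmas \ref{l:LeeformLDlemma}, \ref{l:Hessiandecomp}, integrate against $e^{-f}\,dV_g$ to get invariance of $f$, deduce constancy of $e^{-f}\tr_{\gw^T}F^Z$ and $e^{-f}\tr_{\gw^T}F^W$, and kill $F^W$ by exactness versus positivity of $\gw^T$. Your remark that the horizontal equation only becomes the stated first line after $F^W=0$ is a point the paper leaves implicit, and is handled correctly here.
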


\begin{rmk} \label{r:notSLCK} We pause here to note that the only case of a 
soliton in this ansatz which is also locally conformally K\"ahler is the 
standard metric on the diagonal Hopf surfaces.  In particular, taking the Hodge 
star of the second soliton equation yields that $d (e^{-f} \theta) = 0$.  If 
the metric was also locally conformally K\"ahler, then $d \theta = 0$, and one 
then concludes $d f \wedge \theta = 0$.  Since $f$ is basic, comparing against 
Lemma \ref{l:Leeformlemma} thus implies that $d f = 0$, and so the metric is a 
fixed point of pluriclosed flow, and thus is the standard metric on a diagonal 
Hopf surface (see \S \ref{s:background}).
\end{rmk}

\section{Existence Proofs} \label{s:existence}

In this section we complete the proof of Theorem \ref{t:solitonthm}.  First we 
observe a further a priori Killing field present in this setting.  Using this 
we provide two conceptually distinct but ultimately equivalent reductions of 
the soliton system to ordinary differential equations.  First we consider 
Hopf surfaces where the underlying Sasaki manifold is quasiregular, in which 
case the quotient space is an orbifold.  In this setting the extra Killing 
field corresponds to the natural rotational symmetry on this orbifold, and we 
reduce the soliton to equations in the underlying arclength parameter on this 
orbifold.

\subsection{A further a priori symmetry}

Thus far we have only set up the soliton and flow equations with two real 
holomorphic Killing fields on a complex surface, and thus it is a ``codimension 
2'' construction, and one would still expect to use methods of partial 
differential equations to find solutions.  However, building upon a fundamental 
observation in the theory of Ricci solitons on surfaces (\cite{Ricciflowbook} 
p. 241, \cite{ChenLuTian}), we see that 
the equations automatically acquire an extra symmetry, and thus we are in a 
``codimension 1'' situation, which can be addressed by ODE methods.

\begin{prop} \label{p:invprop} Let $(g,f)$ be an invariant soliton on a 
Sasaki-type complex surface $(M^4, J)$.  Then
\begin{align*}
 L_{J \N f} g^T = L_{J \N f} \gw^T = L_{J \N f} F^Z = 0.
\end{align*}

\begin{proof} Since the function $f$ is invariant, $\N f$ is a horizontal 
vector field.  A direct calculation shows that for an invariant horizontal 
vector field $W$ one has
\begin{align*}
 L_{W} g^T (Y,Z) = \N^T W^{\flat} (Y,Z) + \N^T W^{\flat}(Z,Y).
\end{align*}
Moreover, since $J$ is invariant, and preserves $\HH$ by Lemma 
\ref{l:Hermitianbundle}, it follows that $J \N f$ is horizontal and invariant.  
Since the 
transverse structure is K\"ahler, a short calculation shows that
\begin{align*}
 \N^T (J \N f)^{\flat} (Y,Z) + \N^T (J \N f)^{\flat}(Z,Y) = \N^T \N^T f (JY, Z) 
+ \N^T \N^T f(JZ, Y)
\end{align*}
But from the reduced solitons equations (\ref{f:solitonreduction}), we know 
that $\N^T \N^T f$ is of type $(1,1)$, hence the above quantity must vanish, as 
required.

Next, since $d \gw^T = 0$ by Lemma \ref{metriclemma20}, we see by the Cartan 
formula that
\begin{align*}
L_{J \N f} \gw^T =&\ d (J \N f \hook \gw^T) = - d (d f) = 0.
\end{align*}

To show the invariance of $F$, we first note that using the second equation of 
(\ref{f:solitonreduction})
\begin{align*}
L_{J \N f} \tr_{\gw^T} F^Z =&\ (J \N f) \hook d \tr_{\gw^T} F^Z = - \N f \hook 
d^c \tr_{\gw^T} F^Z = - \N f \hook \N f \hook F^Z = 0.
\end{align*}
Since we can 
express $F^Z = \tr_{\gw^T} F^Z \gw^T$, the invariance of $F^Z$ now follows.
\end{proof}
\end{prop}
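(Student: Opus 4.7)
The plan is to prove the three invariance statements one at a time, exploiting the invariance of $f$ already established in Proposition \ref{p:solitonreduction} together with the reduced soliton equations \eqref{f:solitonreduction} and the K\"ahler nature of the transverse structure.

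First I would observe that because $f$ is $Z,W$-invariant, the gradient $\N f$ is horizontal. Lemma \ref{l:Hermitianbundle} then gives that $J\N f$ is also horizontal, and since both $J$ and $f$ are invariant, $J\N f$ is an invariant horizontal vector field. The $L_{J\N f}g^T$ claim I would derive via the identity (valid for invariant horizontal $W$) that
\begin{align*}
(L_W g^T)(Y,Z) = \N^T W^{\flat}(Y,Z) + \N^T W^{\flat}(Z,Y),
\end{align*}
combined with $(J\N f)^{\flat}(\cdot) = g^T(J\N f, \cdot) = -df(\cdot) \circ \text{(transverse identification)}$, so that applying this with $W = J\N f$ identifies the symmetrization above with $\N^T\N^T f(J\cdot,\cdot) + \N^T\N^T f(J\cdot,\cdot)^{t}$. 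The first soliton equation of \eqref{f:solitonreduction} forces $\N^T\N^T f$ to be of type $(1,1)$ with respect to the transverse complex structure (this is how the transverse Hessian decomposes against the trace and $F$ term, both of which are $(1,1)$), and hence its composition with $J$ is symmetric, killing the symmetrization. This step is the most delicate one, since one must carefully check the identification of the transverse K\"ahler structure with the actual horizontal geometry of $g$.

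For the second claim $L_{J\N f}\gw^T = 0$, Lemma \ref{metriclemma20}(1) gives $d\gw^T = 0$, so by Cartan's formula
\begin{align*}
L_{J\N f}\gw^T = d(J\N f \hook \gw^T) + J\N f \hook d\gw^T = d(J\N f \hook \gw^T).
\end{align*}
The standard K\"ahler identity on the transverse structure gives $J\N f \hook \gw^T = -df$, and so $L_{J\N f}\gw^T = -d(df) = 0$. For the third claim $L_{J\N f}F^Z = 0$, I would start from the second soliton equation $e^{-f}\tr_{\gw^T}F^Z = \gl$ in \eqref{f:solitonreduction}, which can be rewritten as $d^c\tr_{\gw^T}F^Z = \N f \hook F^Z$ (by the argument used inside the proof of Proposition \ref{p:solitonreduction}). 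Pairing with $J\N f$:
\begin{align*}
L_{J\N f}\tr_{\gw^T}F^Z = (J\N f)\hook d\tr_{\gw^T}F^Z = -\N f \hook d^c\tr_{\gw^T}F^Z = -\N f \hook \N f \hook F^Z = 0.
\end{align*}
Since the horizontal distribution $\HH$ has real rank $2$, any $(1,1)$-form on $\HH$ is a multiple of $\gw^T$, so $F^Z = (\tr_{\gw^T}F^Z)\gw^T$. The invariance of both factors, established above, yields $L_{J\N f}F^Z = 0$, completing the proof. The hard part is really the first item: the others reduce to Cartan's formula and a one-line trace computation, but identifying the symmetric part of $(J\N f)^{\flat}$ with a composition of the transverse Hessian with $J$ requires taking the transverse K\"ahler structure seriously, and is where the $(1,1)$-character of $\N^T\N^T f$ from the soliton equation gets used.
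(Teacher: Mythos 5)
Your proposal is correct and follows essentially the same route as the paper's proof: the same identity for $L_W g^T$ with $W = J\N f$ combined with the $(1,1)$-character of $\N^T\N^T f$ from \eqref{f:solitonreduction}, the same Cartan-formula argument for $\gw^T$, and the same use of $d^c\tr_{\gw^T}F^Z = \N f \hook F^Z$ together with $F^Z = (\tr_{\gw^T}F^Z)\,\gw^T$ for the last claim. Nothing further is needed.
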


\subsection{ODE reduction in quasiregular case} \label{ss:ODE1}

As discussed in \S  \ref{s:inducedsurfaces}, the quotient orbifold is only 
singular at cone points, of which there are no more than two.  On the smooth 
part, we note that Proposition \ref{p:invprop} implies that a hypothetical 
soliton has $J \N f$ as a Killing field.  Moreover it implies that $J \N f$ is 
holomorphic, and in fact must correspond to the natural holomorphic rotational 
symmetry present on bad orbifolds.  Such a metric can be expressed with respect 
to polar coordinates as
\begin{align} \label{f:gsym}
g = dr^2 + \phi^2(r) d \theta^2,
\end{align}
Note also that the bundle curvature $F^Z$ is invariant, and so we may express
\begin{align} \label{f:Fsym}
F^Z = \gamma(r) dr \wedge d \theta.
\end{align}
It will also be useful to work in terms of a combined quantity
\begin{align} \label{f:psidef}
\psi := \tr_{\gw} F^Z,
\end{align}
which is related to $\phi$ and $\gg$ using Lemma \ref{l:symmcurvature} below.
Also note that for a metric as in (\ref{f:gsym}) to correspond to a metric on 
an orbifold 
means that we have $\phi(0) = \phi(L) = 0$ for some $L > 0$, and moreover if 
our cone points have angles $\gb_1$ and $\gb_2$, then we require $\phi'(0) = 
\frac{\gb_1}{2\pi}, \phi'(L) = - \frac{\gb_2}{2 \pi}$.

\begin{lemma} \label{l:symmcurvature} Given the setup above, one has
\begin{enumerate}
\item $J \del_r = \phi^{-1} \del_{\theta}$, $J \del_{\theta} = - \phi \del_r$,
\item $R = - 2 \frac{\phi_{rr}}{\phi}$,
\item $\psi = \tr_{\gw^T} F^Z = \frac{\gamma}{\phi}$, \quad $\brs{F^Z}^2 = 2 
\frac{\gg^2}{\phi^2}$,
\item $d^c \tr_{\gw_T} F^Z = \left( \gamma_r - \frac{\gamma \phi_r}{\phi} 
\right) d \theta$,
\item For a radial function $f$ one has $\N f \hook F = f_r \gamma d \theta$,
\item For a radial function $f$ one has $\N^2 f = f_{rr} dr^2 + \phi \phi_r f_r 
d \theta^2$.
\end{enumerate}
\begin{proof} 
To understand the complex structure, note that by (\ref{f:gsym}) and the fact 
that $g$ is compatible with $J$ we see that $J \del_r$ must be a multiple of 
$\del_{\theta}$.  But then appealing to compatibility again we obtain
\begin{align*}
1 = g(\del_r, \del_r) = g(J \del_r, J \del_r),
\end{align*}
and so it follows that $J \del_r = \phi^{-1} \del_{\theta}$, using the standard 
orientation.  The equation $J \del_{\theta} = - \phi \del_r$ thus follows via 
$J^2 = - \Id$.  The scalar curvature in item (2) is a standard calculation we 
omit.   For item (3), note using our conventions that in these coordinates we 
have
 \begin{align*}
 \tr_{\gw^T} F^Z = F_{r \theta} g^{rr} J_r^{\theta} = \frac{\gamma}{\phi},
 \end{align*}
 as claimed.  The square norm of $F^Z$ in item (3) follows easily from 
(\ref{f:gsym}) and (\ref{f:Fsym}).  Using the general formula $d^c f(X) = - 
df(JX)$, by the rotational invariance 
it is clear that the only nonvanishing component for $d^c \tr_{\gw^T} F^Z$ is a 
multiple of $d \theta$.  Thus we observe, using (4),
\begin{align*}
d^c \tr_{\gw^T} F^Z (\del_{\theta}) = - d \tr_{\gw_T} F^Z( - \phi \del_r) = 
\phi \left( \tr_{\gw^T} F^Z \right)_r = \gamma_r - \frac{\gamma \phi_r}{\phi}
\end{align*}
For item (6) note first that for a radial function $f$ one has $d f = f_r dr$, 
and so by (\ref{f:gsym}) it follows that $\N f = f_r \del_r$.  Combining this 
with (\ref{f:Fsym}) it is clear that $\N f \hook F^Z = f_r \gamma d \theta$, as 
claimed.
\end{proof}
\end{lemma}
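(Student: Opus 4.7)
The plan is to verify each of the six items by direct computation from the explicit warped-product metric $g = dr^2 + \phi^2(r) d\theta^2$ and the ansatz $F^Z = \gamma(r) dr \wedge d\theta$. None of the items require any input beyond unwinding definitions in the given coordinates; the proof will be entirely computational.

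For item (1), I would observe that on an oriented surface the compatible complex structure rotates by ninety degrees in every tangent plane, so since $\{\del_r,\ \phi^{-1}\del_\theta\}$ is an orthonormal pair, the formula $J\del_r = \phi^{-1}\del_\theta$ follows up to a sign fixed by the choice of orientation, and then $J\del_\theta = -\phi\del_r$ is forced by $J^2 = -\Id$. For item (2), I would compute the two nonvanishing Christoffel symbols $\Gamma^r_{\theta\theta} = -\phi\phi_r$ and $\Gamma^\theta_{r\theta} = \phi_r/\phi$ of the warped product and read off the Gauss curvature $K = -\phi_{rr}/\phi$, which gives $R = 2K$ as claimed. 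For item (3), I would plug $F^Z$ directly into the transverse-trace formula using the $J$-action from (1) to obtain $\gamma/\phi$, and plug it into the pointwise norm using $g^{rr}g^{\theta\theta} = \phi^{-2}$ to obtain $2\gamma^2/\phi^2$.

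For item (4), I would use that $\psi = \gamma/\phi$ depends only on $r$, so $d\psi = \psi_r\, dr$, and then apply $d^c \psi(X) = -d\psi(JX)$ together with the formulas from (1) to see that only the $d\theta$-component survives and equals $\phi \psi_r = \gamma_r - \gamma\phi_r/\phi$. Item (5) is immediate: raising the index of $df = f_r dr$ against $g$ gives $\N f = f_r \del_r$, and contracting with $F^Z$ yields $f_r \gamma\, d\theta$. For item (6), I would use the Christoffel symbols from item (2) in the coordinate expression
\[
(\N^2 f)_{ij} = \del_i \del_j f - \Gamma^k_{ij} \del_k f,
\]
noting that $f_\theta \equiv 0$; this leaves $(\N^2 f)_{rr} = f_{rr}$, $(\N^2 f)_{r\theta} = 0$, and $(\N^2 f)_{\theta\theta} = -\Gamma^r_{\theta\theta} f_r = \phi\phi_r f_r$.

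The only (mild) obstacle is sign and orientation consistency: the sign in item (1) is determined by the orientation, and a mistake there propagates through items (3)--(5). So the one thing requiring genuine care is to fix the convention for the orientation and the sign of $J$ at the outset and apply it consistently throughout; everything else reduces to tracking definitions.
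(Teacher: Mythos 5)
Your proposal is correct and follows essentially the same route as the paper: a direct coordinate computation from $g = dr^2 + \phi^2 d\theta^2$ and $F^Z = \gamma\, dr \wedge d\theta$, with the sign in item (1) fixed by orientation and the remaining items obtained by unwinding the conventions. The only difference is that you explicitly carry out the Christoffel-symbol computations for items (2) and (6), which the paper simply labels as standard and omits; this is a harmless (indeed helpful) addition rather than a different argument.
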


\begin{lemma} \label{l:symmsoliton} Let $(M^2, g)$ be a surface with rotational 
symmetry, expressed as in (\ref{f:gsym}).  Then this is a soliton if and only 
if there exists a constant $A$ such that
\begin{align*}
J \grad f =&\ A \del_{\theta}, \qquad 0 = A \phi_r - \frac{\phi_{rr}}{\phi} - 
\frac{\gg^2}{\phi^2}, \qquad \psi_r = A \phi \psi.
\end{align*}
\begin{proof} Using the metric soliton equation of (\ref{f:solitonreduction}) 
with items (1) and (7) of Lemma \ref{l:symmcurvature} we obtain
\begin{align*}
0 =&\ \tfrac{1}{2} \left( R - \brs{F}^2 \right) g + \N^2 f,\\
=&\ \left( - \frac{\phi_{rr}}{\phi} - \frac{\gamma^2}{\phi^2} \right) \left( 
dr^2 + \phi^2 d \theta^2 \right) + f_{rr} dr^2 + \phi \phi_r f_r d \theta^2.
\end{align*}
Looking at the different components we obtain the equations
\begin{gather} \label{f:symmsoliton10}
\begin{split}
0 =&\ f_{rr} - \frac{\phi_{rr}}{\phi} - \frac{\gamma^2}{\phi^2},\\
0 =&\ \frac{\phi_r f_r}{\phi} - \frac{\phi_{rr}}{\phi} - 
\frac{\gamma^2}{\phi^2}.
\end{split}
\end{gather}
Combining these two yields
\begin{align*}
(\log f_r)_r = \frac{f_{rr}}{f_r} = \frac{\phi_r}{\phi} = (\log \phi)_r
\end{align*}
Integrating this we see that $f_r = A \phi$, for an as yet undetermined 
constant $A$.  Comparing against Lemma \ref{l:symmcurvature} (1) it follows 
that $J \grad f = A \del_{\theta}$, as claimed.  Note that a formula of this 
type was inevitable from the construction, as we already knew that $J \grad f$ 
generated the rotational symmetry.  Also plugging $f_r = A \phi$ into 
(\ref{f:symmsoliton10}) yields the second equation of the lemma.  For the final 
equation we simply differentiate the equation $e^{-f} \psi = \gl$ to yield
\begin{align*}
 0 =&\ \psi_r - f_r \psi = \psi_r - A \phi \psi,
\end{align*}
as claimed.
\end{proof}
\end{lemma}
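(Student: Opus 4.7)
The plan is to directly plug the rotationally symmetric ansatz (\ref{f:gsym}) and (\ref{f:Fsym}) into the first soliton equation from (\ref{f:solitonreduction}), namely $\tfrac{1}{2}(R - |F^Z|^2)g + \nabla^2 f = 0$ (using $L_{\N f} g = 2\N^2 f$), and then use the third soliton equation $e^{-f}\psi \equiv \gl$ to handle the last conclusion. Lemma \ref{l:symmcurvature} already gives us all the raw ingredients: the scalar curvature $R = -2\phi_{rr}/\phi$, the square norm $|F^Z|^2 = 2\gg^2/\phi^2$, and the Hessian $\N^2 f = f_{rr}\,dr^2 + \phi\phi_r f_r\,d\theta^2$ for a radial function $f$ (which is forced since $f$ is invariant under the $J\N f$-rotation).

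First, I would split the matrix equation into its $dr^2$ and $d\theta^2$ components, yielding
\begin{align*}
0 =&\ f_{rr} - \tfrac{\phi_{rr}}{\phi} - \tfrac{\gg^2}{\phi^2}, \qquad
0 = \tfrac{\phi_r f_r}{\phi} - \tfrac{\phi_{rr}}{\phi} - \tfrac{\gg^2}{\phi^2}.
\end{align*}
Subtracting gives $f_{rr}/f_r = \phi_r/\phi$ (assuming $f_r \neq 0$; the degenerate case $f_r \equiv 0$ corresponds to $A = 0$), which integrates to $f_r = A\phi$ for some integration constant $A \in \mathbb{R}$. This immediately yields $\N f = A\phi\,\del_r$, and then from item (1) of Lemma \ref{l:symmcurvature} we obtain $J\N f = A\,\del_\theta$, the first claimed identity. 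Substituting $f_r = A\phi$ back into either component equation gives the second identity $A\phi_r - \phi_{rr}/\phi - \gg^2/\phi^2 = 0$.

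For the third identity, I would invoke the second equation of (\ref{f:solitonreduction}), $e^{-f}\tr_{\gw^T}F^Z = \gl$, i.e. $e^{-f}\psi = \gl$. Differentiating in $r$ gives $\psi_r = f_r \psi = A\phi\psi$, as required. For the converse direction, I would simply reverse these manipulations: given the three ODEs, one checks that the second implies the $dr^2$ and $d\theta^2$ components of the metric soliton equation both hold (using $f_{rr} = (A\phi)_r = A\phi_r$), while the third implies $e^{-f}\psi$ is constant, hence matches the second soliton equation of (\ref{f:solitonreduction}); the third soliton equation $F^W = 0$ plays no role here since we have already restricted to this ansatz.

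There is no real obstacle: the argument is a direct substitution and the only mildly clever step is the observation $(\log f_r)_r = (\log\phi)_r$ that produces the integration constant $A$. As the author remarks, the geometric meaning of the relation $f_r = A\phi$ was already inevitable from Proposition \ref{p:invprop}, which forces $J\N f$ to generate the rotational Killing field $\del_\theta$ up to scale.
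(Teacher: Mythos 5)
Your proposal is correct and follows essentially the same route as the paper: split the radial soliton equation into its $dr^2$ and $d\theta^2$ components using Lemma \ref{l:symmcurvature}, integrate $(\log f_r)_r = (\log\phi)_r$ to get $f_r = A\phi$, and differentiate $e^{-f}\psi = \gl$ for the last equation. Your brief remarks on the degenerate case $f_r \equiv 0$ and on reversing the steps for the converse are reasonable additions but do not change the argument.
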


\subsection{ODE reduction in general case} \label{ss:ODEirreg}

Here we reduce the soliton equations to a system of ODE in the general case.  While the presentation is different, this reduction is a generalization of that in \S \ref{ss:ODE1}.  

Fix $\ga,\gb \in \mathbb C$, determining a Hopf surface as described in \S 
\ref{s:inducedsurfaces}.  We freely adopt the notation of that section here.  Note that, in the irregular case, 
the generic orbit of the Reeb vector $Z$ is dense in the 
standard torus $T_{\gl} := \{ \brs{z_1}^2 = \gl^2, \brs{z_2}^2 = 1 - \gl^2\}$ 
containing it, it follows that basic functions are constant on 
such tori.  As the vector field $E_1$ is tangent to these tori, it follows that 
basic functions are invariant under $E_1$ as well.  While this is not true for general invariant functions in the quasiregular case, we nonetheless impose $E_1$ invariance as an ansatz.  Thus, as $\{E_1,E_2\}$ span 
the contact distribution, we expect to reduce the soliton equation to an ODE 
along the $E_2$ direction.  In particular, let $X = \gs^{-1} E_2$, and define a 
parameter
\begin{align*}
s(t) = \frac{b}{2} \ln \left( b - t 
\right) - \frac{a}{2} \ln \left(t - a \right),
\end{align*}
defined for $t \in (a,b)$, (note that as in \S \ref{s:inducedsurfaces} we have 
$a < b$).
Observe that
\begin{align*}
X( s(\gs) ) =&\ \tfrac{1}{\gs} \left\{ \brs{z_2}^2 \left( x_1 \del_{x_1} + y_1 
\del_{y_1} \right) - \brs{z_1}^2 \left( x_2 \del_{x_2} + y_2 \del_{y_2} \right) 
\right\} \left\{ - \frac{a}{2} \ln \left(\gs - a \right) + \frac{b}{2} \ln 
\left( b - \gs  \right) \right\}\\
=&\ \tfrac{1}{\gs} \left\{ \brs{z_2}^2 \left( - \frac{a}{\gs - a} \left( a 
x_1^2 + a y_1^2\right) + \frac{b}{b - \gs} \left( -a x_1^2 - a y_1^2 \right) 
\right) \right.\\
&\ \left. \qquad - \brs{z_1}^2 \left( - \frac{a}{\gs - a} \left( b x_2^2 + b 
y_2^2\right) + \frac{b}{b - \gs} \left( -b x_2^2 - b y_2^2 \right) \right) 
\right\}\\
=&\ \tfrac{\brs{z_1}^2 \brs{z_2}^2}{\gs} \left\{ - \frac{a^2}{\gs - a} - 
\frac{ab}{b - \gs} + \frac{ab}{\gs - a} + \frac{b^2}{b - \gs} \right\}\\
=&\ 1,
\end{align*}
where the last line follows by judiciously applying the identities $\gs - a = 
(b - a)\brs{z_2}^2, b - \gs = (b-a) \brs{z_1}^2$.

We now define functions which describe the metric and bundle curvature as in \S 
\ref{ss:ODE1}.  First, for a given invariant metric, define $\phi$ and $\xi 
\geq 0$ via
\begin{align*}
\phi^2 = \xi := \tfrac{1}{2} \gw^T(E, J \bar{E}).
\end{align*}
Also, we set
\begin{align*}
\psi = \tr_{\gw^T} F^Z.
\end{align*}
Next we must determine the boundary conditions, i.e. the behavior at the points 
$z_2 = 0$, or $z_1 = 0$, corresponding to $s \to \infty, s \to -\infty$, or 
$\gs = a, \gs = b$, respectively.  First, as a function of $\gs$, it follows 
from the definition of $\xi$ and the nondegeneracy of the metric that $\xi$ 
must be of order $\brs{z_2}^2$ near $z_2 = 0$.  To convert this to the 
parameter $s$, we observe the formula
\begin{align*}
 \left(b - a \right) \brs{z_2}^2 = e^{-\frac{2}{a} s} \left( (b - a) 
\brs{z_1}^2 \right)^{\frac{b}{a}}
\end{align*}
Hence, expanded as a power series in $e^{-s}$, the leading order term is 
proportional to $e^{-\frac{2}{a} s}$.  It follows easily that the function 
$\phi$ has such an expansion with leading order term $e^{-\frac{1}{a} s}$.  It 
follows easily that the $Y$ derivative of $\phi$ at this boundary point is 
$-\frac{1}{a}$.  An identical argument shows that the $Y$ derivative of $\phi$ 
at the boundary point $z_1 = 0$ is $\frac{1}{b}$, as claimed.

\begin{lemma} \label{l:irreglem10} Let $(M^4, J)$ be an irregular Sasaki-type 
complex surface.  An 
invariant metric $(g^T, \mu)$ determines a steady pluriclosed soliton if and 
only if there exists a constant $A$ such that
\begin{gather} \label{l:irregODE}
\begin{split}
0 = A \phi_r - \frac{\phi_{rr}}{\phi} - 
\psi^2, \qquad \psi_r = - A \phi \psi,
\end{split}
\end{gather}
where $\tfrac{\del}{\del r} = \frac{1}{\phi \gs} E_2$.
\begin{proof} 
Considering the transverse piece of the reduced soliton equations in 
Proposition \ref{p:solitonreduction},
we obtain the fact that the transverse Hessian of $f$ is pure trace, i.e.
\begin{align} \label{f:ired10}
\N^T \N^T f = \tfrac{1}{2} \gD^T f g^T.
\end{align}
We will use this condition to first of all determine an explicit relationship 
between $f$ and $g^T$.  The calculations can be effectively globalized using 
the frame $\{E_1, E_2\}$.  Let
\begin{align*}
E = \gs^{-1} \left(E_1 - \i E_2 \right),
\end{align*}
and note that $E$ spans  the space of transverse $(1,0)$-vector fields 
everywhere 
except the points where $z_1 = 0$ or $z_2 = 0$.  Since (\ref{f:ired10}) implies 
that the $(2,0) + (0,2)$ piece of the transverse 
Hessian of $f$ vanishes, it follows that
\begin{align*}
E E f - E (\log \xi) E f = 0.
\end{align*}
Since basic functions are also invariant under $E_1$ as described above, we see 
that this implies
\begin{align*}
- \gs^{-1} E_2 \left( \gs^{-1} E_2 f \right) + \gs^{-1} (E_2 \log \xi) \left( 
\gs^{-1} E_2 f \right),
\end{align*}
hence
\begin{align*}
E_2 \log \gs^{-1} E_2 f = E_2 \log \xi,
\end{align*}
and thus there exists a constant $A$ such that
\begin{align} \label{f:ired15}
X f = \gs^{-1} E_2 f = A \xi.
\end{align}
Let $\til{X} = \pi_H X$, the horizontal projection, and then note that, since 
$f$ is basic,
\begin{align} \label{f:ired20}
g^T(\N f, \til{X}) = X f = A \xi,
\end{align}
and hence
\begin{align} \label{f:ired25}
\N f = A \til{X}.
\end{align}
Also, using the $E_1$ invariance of $\xi$, we obtain the formula for the 
transverse scalar curvature
curvature 
\begin{align*}
R^T = - \xi^{-1} X X \log \xi.
\end{align*}
Combining these observations we finally obtain that the transverse piece of the 
soliton equation reduces to
\begin{align} \label{f:ired30}
0 =&\ A X \xi - X^2 \log \xi - \psi^2 \xi.
\end{align}
To connect this to the point of view in \S \ref{ss:ODE1}, we need to choose an 
arclength 
parameter for $X$.  In particular, recall that $\phi^2 = \xi$, and 
let $\frac{\del}{\del r} = \phi^{-1} X = \frac{1}{\phi \gs} E_2$ as in the 
statement.  Observe then that (\ref{f:ired30}) implies
\begin{align*}
 0 =&\ A \left(\phi \frac{\del}{\del r} \right) \phi^2 - 2 \left(\phi 
\frac{\del}{\del r} \right) \left(\phi \frac{\del}{\del r} \right) \log \phi - 
\psi^2 \phi^2\\
 =&\ 2 A \phi^2 \phi_r - 2 \phi \phi_{rr} - \psi^2 \phi^2,
\end{align*}
from which the first claimed equation follows by dividing by $\phi^{-2}$.
Also, it is elementary to differentiate the equation $e^{-f} \psi = \gl_1$ with 
respect to $X$ and apply (\ref{f:ired15}) to obtain
\begin{align*}
0 =&\ X (e^{-f} \psi) = e^{-f} \left( X \psi - A \xi \psi \right),
\end{align*}
which in turn directly implies
\begin{align*}
 \psi_r =&\ \phi^{-1} X \psi =  A \phi^{-1} \xi \psi = A \phi \psi.
\end{align*}
The lemma follows.
\end{proof}
\end{lemma}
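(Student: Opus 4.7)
The plan is to descend from the coordinate-free soliton system of Proposition \ref{p:solitonreduction} to a one-dimensional problem by exploiting the $E_1$-invariance of basic functions. In the irregular case, since a generic orbit of the Reeb field $Z$ is dense in the torus $\{|z_1|^2=\lambda^2,|z_2|^2=1-\lambda^2\}$, every basic function is automatically constant on such tori, hence $E_1$-invariant. Therefore the true remaining direction of variation is the single vector field $X = \sigma^{-1} E_2$, and our task is to extract the system satisfied by $\phi$, $\psi$, $f$ regarded as functions of an arclength parameter $r$ along $X$.

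First, I would unpack the transverse component of the soliton equation $(R^T - |F^Z|^2) g^T + L_{\nabla f} g^T = 0$. Since $g^T$ is K\"ahler transverse (Lemma \ref{metriclemma20}(1)) and the right-hand side is pure trace, this forces $\nabla^T\nabla^T f$ to be of type $(1,1)$; i.e., its $(2,0)+(0,2)$ part vanishes. Written in terms of the global transverse $(1,0)$ field $E = \sigma^{-1}(E_1 - \i E_2)$, this becomes $EEf - E(\log\xi)\,Ef = 0$, where $\xi = \tfrac{1}{2}\omega^T(E,J\bar E) = \phi^2$. Using $E_1$-invariance of $f$ and $\xi$, this reduces to $E_2 \log(\sigma^{-1}E_2 f) = E_2 \log \xi$, which integrates to the existence of a constant $A$ with
\begin{equation*}
Xf \;=\; \sigma^{-1} E_2 f \;=\; A\,\xi.
\end{equation*}
Consequently $\nabla f = A \tilde X$, where $\tilde X = \pi_H X$.

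Next, I would substitute this identity back into the trace of the transverse equation. Since $f$ and $\xi$ are $E_1$-invariant, a direct computation identifies the transverse scalar curvature with $R^T = -\xi^{-1} X X \log\xi$, and $\tfrac{1}{2}(R^T - |F^Z|^2)g^T + \tfrac{1}{2}L_{\nabla f} g^T = 0$ collapses, upon pairing with $E,\bar E$, to
\begin{equation*}
0 = A X \xi - X^2 \log\xi - \psi^2 \xi.
\end{equation*}
Passing to arclength by setting $\partial/\partial r = \phi^{-1} X = (\phi\sigma)^{-1} E_2$ and using $\xi = \phi^2$ turns this identity into the first desired ODE $A\phi_r - \phi_{rr}/\phi - \psi^2 = 0$ after dividing by $\phi^2$. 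The second ODE is immediate: differentiating the second equation of Proposition \ref{p:solitonreduction}, $e^{-f}\psi = \lambda_1$, along $X$ and invoking $Xf = A\xi$ yields $\psi_r = A\phi\psi$, up to the sign convention recorded in the statement. Conversely, reversing each step shows that any triple $(\phi,\psi,f)$ satisfying these ODEs produces a solution of the reduced soliton system, so the equivalence holds.

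The main technical hurdle I anticipate is justifying the integration step $Xf = A\xi$: it requires simultaneously knowing that the $(2,0)+(0,2)$ part of $\nabla^T\nabla^T f$ vanishes and using $E_1$-invariance to eliminate the $E_1$ derivatives hidden in the complex vector $E$. Once this integration constant is produced globally (which is nontrivial precisely because $X$ degenerates at $z_1=0$ and $z_2=0$), the rest of the argument is a computation. The $F^W=0$ conclusion of Proposition \ref{p:solitonreduction} is what allows one to drop the second connection component from the system entirely, so that only $\psi = \tr_{\omega^T} F^Z$ enters.
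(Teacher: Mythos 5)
Your proposal is correct and follows essentially the same route as the paper: extracting $Xf = A\xi$ from the vanishing of the $(2,0)+(0,2)$ part of the transverse Hessian via the frame $E=\gs^{-1}(E_1 - \i E_2)$ and $E_1$-invariance, identifying $R^T = -\xi^{-1}XX\log\xi$, passing to the arclength parameter $r$, and differentiating $e^{-f}\psi = \gl_1$ along $X$ for the second equation. The sign ambiguity you note in $\psi_r = \pm A\phi\psi$ matches a discrepancy already present between the paper's statement and its own computation, so it is not a defect of your argument.
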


\subsection{Solutions}

In this subsection, we construct solutions to the system of ODEs derived in 
Lemma \ref{l:irreglem10}.
First let us address the constant $A$.  Observe that if $A = 0$, it follows 
directly that $\psi_r = 0$, and 
so after scaling we can assume $\psi \equiv 1$, and we obtain the ODE 
$\phi_{rr} = - \phi$.  Thus the metric on the only 
possible solution then corresponds to the round metric $S^2$.  Indeed the 
resulting 
metric corresponds to the Hopf metric (\ref{f:hopfmetric}).

Thus, now assuming $A \neq 0$, we perform a change of variables which greatly 
simplifies the 
system and causes the parameter $A$ to drop out.  In particular, let
\begin{align*}
x = A \phi, \qquad y = A \phi_r, \qquad z = A^{\tfrac{1}{2}} 
\psi.
\end{align*}
Then from the system of ODEs one derives
\begin{gather} \label{f:ODE}
\begin{split}
x_r =&\ A \phi_r = y\\
y_r =&\ A \phi_{rr} = A \left( A \phi_r \phi - \frac{\gg^2}{\phi^2} \right) = 
xy - z^2\\
z_r =&\ A^{\tfrac{1}{2}} \psi_r = - AA^{\tfrac{1}{2}} \left( \frac{\gg \phi_r - 
A \gamma \phi^2}{\phi^2} - 
\frac{\gg \phi_r}{\phi^2} \right) = xz.
\end{split}
\end{gather}

\begin{prop} \label{p:ODEexistence} For every $1 < \rho < \infty$ there exists 
$z_0$ such that the solution to (\ref{f:ODE}) with initial condition 
$(0,1,z_0)$ exists (at least) on a finite time interval $[0,T]$ and satisfies
\begin{align} \label{f:ODEprops}
x_{| [0,T]} \geq 0, \qquad x(T) = 0, \qquad y(T) < 0, \qquad y(0) / \brs{y(T)} 
= \rho.
\end{align}
\begin{proof} The overall argument consists of finding choices of $z_0$ which 
give the required behavior first for $\rho$ close to $1$, then for $\rho$ 
large, then arguing by a continuity method that one obtains all values in 
between.

We first describe solutions with $\rho$ close to $1$.  In particular, we claim 
that for $z_0$ sufficiently large the following inequalities are preserved:
\begin{enumerate}
\item $x(t) \leq t - \tfrac{1}{2} (1 + \gd) z_0^2 t^2$,
\item $1 - (1+\gd) z_0^2 t \leq y(t) \leq 1 - (1 - \gd) z_0^2 t$,
\item $z(t) \leq (1 + \gd) z_0$
\end{enumerate}
These are certainly satisfied at time $t = 0$, so it remains to show that they 
are preserved up to the first time $T$ such that $x(T) = 0$.  First note that, 
as long as inequality (2) is preserved we see
\begin{align*}
x(t) =&\ x(0) + \int_0^t y(s) ds \leq \int_0^t \left\{ 1 - (1 - \gd) z_0^2 s 
\right\} ds \leq t - \tfrac{1}{2} (1 - \gd) z_0^2 t^2.
\end{align*}
Thus, the maximal time we must consider satisfies $t \leq \frac{2}{(1 - \gd) 
z_0^2}$, and this also shows that condition (1) is preserved as long as 
condition (2) is.  This also implies that that as long as condition (1) is 
preserved, we have the overall upper bound
\begin{align*}
x(t) \leq \sup \left(t - \tfrac{1}{2} (1 - \gd) z_0^2 t^2 \right) = \tfrac{1}{2 
(1 - \gd) z_0^2}.
\end{align*}
Thus we can integrate and estimate the differential equation for $z$ to obtain
\begin{align*}
z(t) =&\ z_0 \exp \left(\int_0^t x(s) ds \right) \leq z_0 \exp \left( 
\frac{t}{2(1 + \gd) z_0^2} \right) \leq z_0 \exp \left( \frac{1}{(1+\gd)^2 
z_0^4} \right) < (1 + \gd) z_0.
\end{align*}
for $z_0$ chosen sufficiently large.  Lastly we note that using all the 
estimates in play and integrating
\begin{align*}
y(t) \leq&\ y(0) + \int_0^t \left( x y - z^2 \right) ds\\
\leq&\ 1 + \int_0^t \left(\frac{1}{2(1+\gd) z_0^2} \left(1 - (1-\gd) z_0^2 s 
\right) - z_0^2 \right) ds\\
\leq&\ 1 + \left( \frac{1}{2(1+\gd) z_0^2} - z_0^2 \right) t\\
\leq&\ 1 - \left( 1 - \gd \right) z_0^2 t,
\end{align*}
for $z_0$ chosen sufficiently large.  A very similar integration yields the 
lower bound as well.  We have shown that conditions (1), (2), and (3) hold 
until $x(T) = 0$.  We claim that $y(0) / \brs{y(T)}$ approaches $1$ for $z_0$ 
chosen large.  To do this we first obtain a lower bound for the first time $T$ 
that $x(T) = 0$.  In particular, integrating and estimating we obtain
\begin{align*}
x(t) \geq x_0 + \int_0^t \left( 1 - (1+\gd) z_0^2 s \right) ds = t - 
\tfrac{1}{2} (1 + \gd) z_0^2 t^2
\end{align*}
Thus one sees that $T \geq \frac{2}{(1+\gd) z_0^2}$.  Returning to estimate 
(2) we thus obtain
\begin{align*}
1 - 2 \frac{1+\gd}{1-\gd} \leq y(T) \leq 1 - 2 \frac{1-\gd}{1+\gd}.
\end{align*}
Thus certainly for $\gd$ chosen sufficiently small $y(T)$ approaches $-1$, as 
claimed.

We now describe solutions corresponding to large values of $\rho$.  This is 
more involved, requiring describing three phases of the solution which we name 
the ``growth phase,'' ``control phase,'' and ``decay phase.''  By the ``growth 
phase'' we mean that, given $\gL > 0$, we can choose $z_0$ sufficiently small 
that there exists a time $t_0 > 0$ where $x(t_0) \geq \gL$.  Fix a small 
constant $\gd > 0$, and note that, as long as $z \leq \gd$, and $y \geq 0$ we 
can estimate
\begin{align*}
y(t) \geq y_0 + \int_0^t \left( xy - z^2 \right) ds \geq 1 - \gd^2 t,
\end{align*}
and so in particular for $\gd$ small we have $\inf_{[0,1]} y \geq 
\tfrac{1}{2}$, and hence $x(1) \geq \tfrac{1}{2}$.  Note then that for times $t 
\geq 1$, assuming still $z \leq \gd$ sufficiently small, we obtain the 
elementary estimate $y_t \geq \tfrac{1}{4}$, which will be preserved, and hence 
we conclude $y(t) \geq y(1) = \tfrac{1}{2}$, and thus $x(t) \geq x(1) + 
\tfrac{1}{2} t$, and so there exists a first time $t_1 \leq 2 \gL$ such that  
$x(t_1) = \gL$.  It remains to ensure we can choose $z_0$ sufficiently small to 
guarantee the hypothesis $z \leq \gd$ on a time interval of this length.  To 
that end we integrate the equation for $z$ and estimate on the time interval 
$[0,t_1]$,
\begin{align*}
z \leq&\ z_0 \exp \left( \int_0^t x(s) ds \right) \leq z_0 e^{t \gL} \leq z_0 
e^{2 \gL^2} < \gd,
\end{align*}
provided $z_0 < \gd e^{-2 \gL^2}$.

Next we have the ``control phase.''  In particular, we establish that $x$ does 
not grow without bound, but rather achieves a unique maximum value.  
Specifically, we claim that there exists a time $t_1$ such that $y(t_1) = 0$.  
To see this first note that
\begin{align*}
\frac{d}{dt} \frac{y}{z} = \frac{xy - z^2}{z} - \frac{y xz}{z^2} = - z < - z_0.
\end{align*}
Hence by an elementary integration we obtain
\begin{align*}
y(t) < z(t) \left( \frac{y_0}{z_0} - z_0 t \right) \leq 0
\end{align*}
for $t \geq \frac{y_0}{z_0^2}$, as claimed.

Lastly we have the ``decay phase,'' wherein we show $x$ returns to zero, and 
moreover that $y$ becomes very large and negative at that time.  Note that 
$y(t) \leq 0$ is certainly preserved by the ODE, and in fact for $x(t) \geq 0, 
y(t) \leq 0$ one has $y_t \leq - z^2 \leq - z_0^2$, it follows easily that 
there exists a first time $t_3$ such that $x(t_3) = 0$.  We furthermore claim 
that one has $y(t_3) \leq - \tfrac{1}{2} x(t_2)^2$.  We obtain this again via 
comparison with the idealized flow lines, in other words, we know that
\begin{align*}
y_t = xy - z^2 \leq xy = \tfrac{1}{2} (x^2)_t.
\end{align*}
Integrating the ODE $y_t = \tfrac{1}{2} (x^2)_t$ yields
\begin{align*}
y(t) = \tfrac{1}{2} x^2 + C,
\end{align*}
for some constant of integration $C$.  The flow lines are thus parabolas in a 
standard phase space diagram.  Choosing $C = - \tfrac{1}{2} x(t_2)^2$, we 
obtain the ideal boundary indicated in Figure \ref{fig:ODE}, which intersects 
the $y$-axis at the point $(0,- \tfrac{1}{2} x(t_2)^2)$.  By comparison we know 
that the solution to our ODE must lie below this curve, and thus at the time 
$t_3$ where $x(t_3) = 0$, it follows immediately that $y(t_3) \leq - 
\tfrac{1}{2} x(t_2)^2$, as claimed.
\end{proof}
\end{prop}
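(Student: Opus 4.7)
The plan is a shooting/continuity argument in the parameter $z_0$: by standard ODE theory the solution $(x,y,z)(\cdot; z_0)$ depends smoothly on $z_0$, and wherever the first return time $T(z_0)$ is a transverse crossing $\{x=0,\ y<0\}$, the map $z_0 \mapsto (T(z_0), y(T(z_0)))$ is continuous via the implicit function theorem. So it suffices to identify two limiting regimes, one giving $|y(T)|$ close to $y(0) = 1$ (so $\rho$ near $1$) and one giving $|y(T)|$ approaching the opposite extreme (so $\rho$ taking the opposite limiting value), and then invoke the intermediate value theorem.

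For large $z_0$ I would exploit that the $-z^2$ source term in the $y$-equation dominates, forcing $y$ to decrease nearly linearly from $1$ to $-1$ while $x$ stays small of order $O(z_0^{-2})$. To formalize this I would set up a bootstrap with a small parameter $\delta > 0$ and ansatz
\begin{align*}
1 - (1+\delta) z_0^2 t \;\leq\; y(t) \;&\leq\; 1 - (1-\delta) z_0^2 t,\\
\brs{x(t)} \;&\leq\; t - \tfrac{1}{2}(1-\delta) z_0^2 t^2,\\
z(t) \;&\leq\; (1+\delta) z_0,
\end{align*}
valid until $x$ returns to zero. Under the ansatz $\int_0^t x\, ds = O(z_0^{-4})$, so $z(t) = z_0 \exp(\int x)$ remains inside $(1+\delta)z_0$; integrating $z_r = xz$ and the formula $y_r = xy - z^2$ then closes the $y$ and $x$ bounds. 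The first return time $T$ is pinned to $2/z_0^2$ up to $O(\delta)$, forcing $y(T)$ into a shrinking neighborhood of $-1$.

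For small $z_0$ the argument splits into three phases. In the \emph{growth phase} the term $-z^2$ is negligible: as long as $z \leq \delta$ and $y \geq 0$, crude integration gives $y \geq \tfrac{1}{2}$ on $[0,1]$ and $y_r \geq \tfrac{1}{4}$ thereafter, so $x$ grows at least linearly and reaches any prescribed $\Lambda$ by time $t_1 = O(\Lambda)$; since $z = z_0 \exp(\int_0^t x)$, choosing $z_0 \leq \delta e^{-2\Lambda^2}$ preserves the assumption $z \leq \delta$. In the \emph{control phase} the pointwise identity $\tfrac{d}{dr}(y/z) = -z \leq -z_0$ forces $y/z$ to decrease linearly, so $y$ must cross zero at some $t_2 \leq t_1 + y_0/z_0^2$, with $x(t_2) \geq \Lambda$. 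In the \emph{decay phase}, once $x \geq 0$ and $y \leq 0$, comparison with the ideal flow $y_r = xy$ (whose orbits are the parabolas $y = \tfrac{1}{2}x^2 + C$) applies: $y_r \leq xy$ places the trajectory below the parabola through $(x(t_2),0)$, so $y \leq \tfrac{1}{2}(x^2 - \Lambda^2)$, and at the first return time $T$ we get $y(T) \leq -\tfrac{1}{2}\Lambda^2$, which can be driven to $-\infty$ by enlarging $\Lambda$. That $x$ does in fact return to zero follows from $y \leq -z_0^2$ throughout this phase.

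With the two limiting regimes established I would then conclude by verifying that $z_0 \mapsto y(T(z_0))$ is continuous on the shooting interval, which uses the strict sign $y(T(z_0)) < 0$ from both analyses to apply the implicit function theorem to $x(T;z_0) = 0$. The two regimes produce values of $y(T)$ limiting to $-1$ and to $-\infty$ respectively, so IVT covers every intermediate $\rho = y(0)/\brs{y(T)}$ in $(1,\infty)$. I expect the main obstacle to be closing the bootstrap in the small-$z_0$ phase: it requires simultaneously taming the exponential amplification built into $z_r = xz$, the sign change of $y$ in the control phase, and the parabolic comparison in the decay phase, all with constants that must be compatible as $\Lambda \to \infty$ and $z_0 \to 0$ in a coordinated way.
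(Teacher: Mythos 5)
Your proposal is correct and follows essentially the same route as the paper: the identical bootstrap ansatz for large $z_0$ pinning $y(T)$ near $-1$, the same three-phase (growth/control/decay) analysis for small $z_0$ using $\tfrac{d}{dt}(y/z)=-z$ and the parabolic comparison $y_t\leq\tfrac12(x^2)_t$ to force $y(T)\leq-\tfrac12\Lambda^2$, and a continuity/intermediate-value argument in $z_0$ to interpolate. The only difference is that you spell out the continuity step (transversality of the crossing and the implicit function theorem), which the paper leaves implicit.
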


\begin{figure}[ht] \label{fig:ODE}
\begin{tikzpicture}[scale=0.6]

\draw [thick,->] (0,0) -- (16,0) node[right] {$x$};
\draw [thick,<->] (0,-18) -- (0,10) node[above] {$y$};

\node [draw,text width=1.27in] (v0) at (-3.8,5) {$x_0 = 0$, $y_0=1$\\ 
\small{\textcolor{red}{Red} curves: $z_0 \waymore 1$\\ \textcolor{green}{Green} 
curve: $z_0 \ll 1$}};

\draw [thick,->](v0) -- (-0.3,2.3);

\draw[thick,scale=2,domain=-1:1,smooth,variable=\y,red]  plot ({-\y*\y/4 + 
1/4},{\y});
\draw[thick,scale=2,domain=-1:1,smooth,variable=\y,red]  plot ({-\y*\y/2 + 
1/2},{\y});

\draw[thick,scale=2,domain=0:4,smooth,variable=\x,green] plot ({\x},{\x*\x/8 + 
1});
\draw[color=green,thick] (8,6) .. controls (9.9636,7.7745) and (15,7.6339) .. 
(15,0);
\draw[color=green,thick] (15,0) .. controls (15,-10.0914) and (8.7718,-13.9378) 
.. (0.0,-16.579);

\draw[thick,scale=2,domain=0:4,smooth,variable=\x,blue] plot ({\x},{\x*\x/4 + 
1});

\node [draw,text width=1.25in] (v1) at (8.5,2.3214) {\small{1. Growth phase: 
$x(t_1) \waymore 1$, $z(t_1) \ll 1$}};
\node [draw,text width=1.1in] (v2) at (8,-3) {\small{2. Control phase: $x$ 
reaches a maximum, $y(t_2) = 0$}};
\node [draw,text width=1.3in] (v3) at (4.1707,-7.8619) {\small{3. Decay phase: 
solution lies below blue curve, $x(t_3)=0$, $y(t_3) \ll 1$}};

\node(v5) at (7.0831,6.1175) {$t_1$};
\draw [fill=black] (v5) ++ (0.5,-0.5) circle (0.1);

\draw [thick,->](v1) -- (7.65,5.3333);

\node(v6) at (14.5,0.5) {$t_2$};
\draw [fill=black] (15,0)  circle (0.1);

\draw[thick,->](v2) -- (14.5,-0.3);

\draw [fill=black] (0,-16.579)  circle (0.1);
\node(v6) at (0.5,-17) {$t_3$};
\draw[thick,->](v3) -- (0.3,-16);

\draw[thick,scale=2,domain=0:{56^0.5},smooth,variable=\x,blue] plot 
({\x},{\x*\x/8 - 7});

\draw [fill=black] (0,2)  circle (0.1);
\draw [fill=black] (0,-2)  circle (0.1);

\node [draw,text width=0.8in](v7) at (-4,-4) {For $z_0 \waymore 1$, $y(T) 
\approx -1$};
\draw [thick,->](v7) -- (-0.3,-2.3);

\end{tikzpicture}
\caption{Solutions of reduced ODE system}
\end{figure}
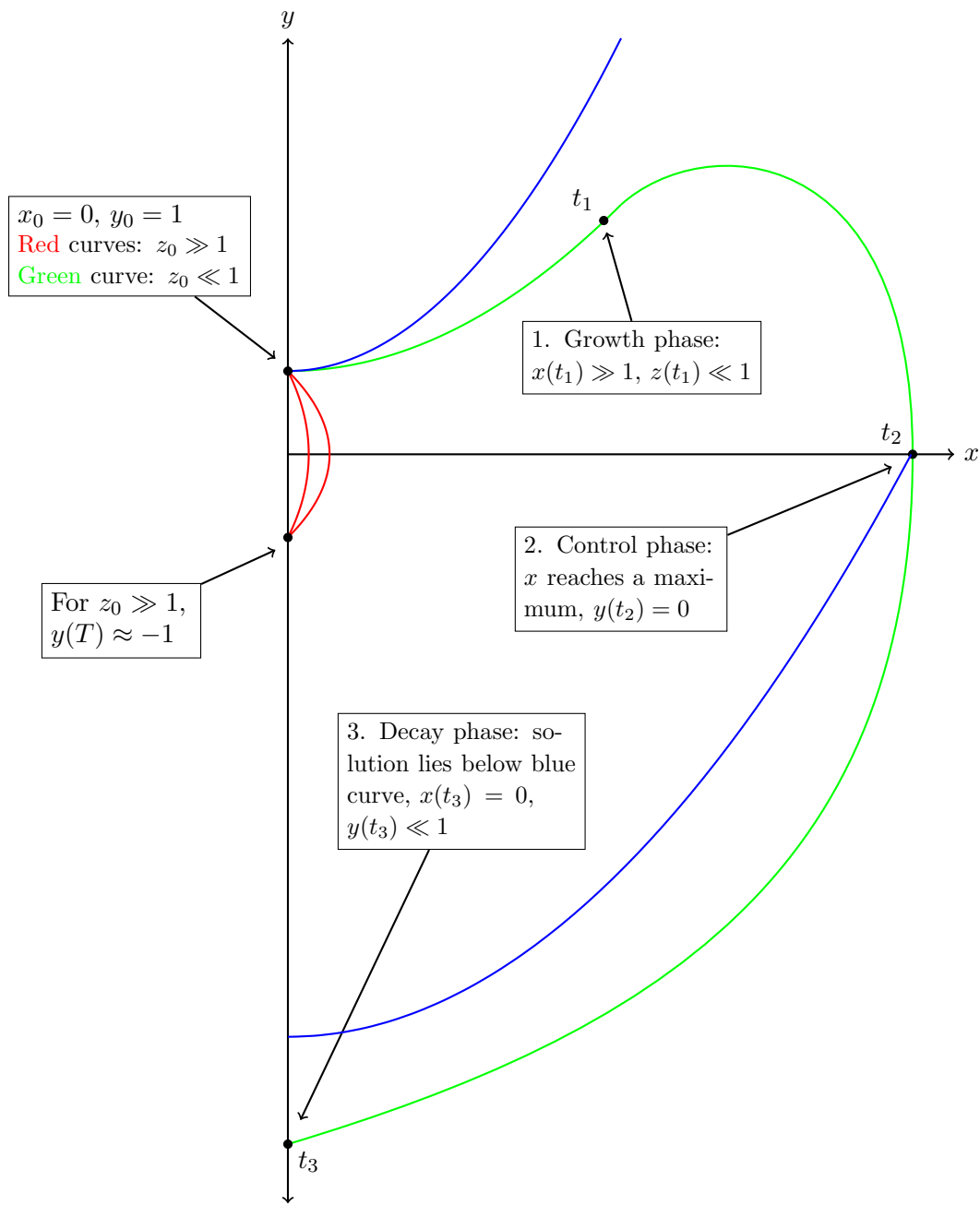

\subsection{Main proofs}

\begin{proof} [Proof of Theorem \ref{t:solitonthm}]

Fix $\ga,\gb$ $\brs{\ga} < \brs{\gb}$ determining a primary Hopf surface as in 
\S \ref{s:inducedsurfaces}.  Let $\rho = \frac{b}{a} = \frac{\ln \brs{\gb}}{\ln 
\brs{\ga}} > 1$, and choose $z_0$ and $(x(r),y(r),z(r))$ according to 
Proposition \ref{p:ODEexistence}.  By rescaling this solution as in 
(\ref{f:ODE}), we can obtain $x'(0) = \frac{1}{b}$, which by construction 
forces and $x'(T) = - \frac{1}{a}$.  Thus, as explained
in \S \ref{ss:ODEirreg}, the function $\phi = x$ defines a transverse metric 
$g^T$, with transverse K\"ahler form $\gw^T$.  This in turn defines a curvature 
form $F^Z = z \gw^T$.  Furthermore, by construction the soliton function $f$ 
will only depend on the parameter $\gs$, and is solved for using $\phi$ via 
(\ref{f:ired15}).  By construction the triple $(g^T, F^Z, f)$ solves the system 
of equations (\ref{f:solitonreduction}).
To finish we must ensure that $F^Z$
arises as the curvature of a Hermitian connection form.  To that end we 
first note that for a solution to the reduced soliton equations $(g,F^Z, f)$, 
given $\gl > 0$ one has that $(\gl 
g, \gl^{\tfrac{1}{2}} F^Z, f)$, for any $\gl > 0$.  Thus without loss of 
generality we can rescale so that $[F^Z]_B = [F^Z_{\mu_0}]$, where
$\mu_0$ denotes the connection for associated to any background 
invariant 
metric on $M$, for instance the one arising from the original Sasakian 
structure.  By the 
$\del_b\delb_b$-lemma \cite{ElKacimi} there exists an invariant
function $\zeta$ such that $F^Z = F^Z_{\mu_0} + \i \del_b \delb_b \zeta$.  It 
follows that $F^Z = 
F^Z_{\mu_{\zeta}}$, in the notation of Lemma \ref{l:Hermconnvar}.  Moreover, it 
is clear by construction that $F^W_{\mu_{\zeta}} = 0$.  The triple $(g^T, 
\mu_{\zeta}, f)$ is the claimed soliton.

Finally, we address the case of secondary Hopf surfaces.  As explained in the 
work of Kato \cite{KatoHopf, KatoHopferrata}, for Hopf surfaces of class $1$ 
with $\brs{\ga} \neq \brs{\gb}$, the fundamental group $\gG$ of $M$ is 
expressed as a semidirect product $\gG = \IP{\gg_{\ga,\gb}} \ltimes H$, where 
$H\subset U(1) \times U(1)$, the group of diagonal unitary matrices acting in 
the standard way on $\mathbb C^2$, and so it suffices to show that the solitons 
we have constructed are invariant under this torus.  As explained in 
\ref{ss:ODEirreg}, the functions $\phi$ and $\psi$ are constant on these 
tori.
Since the final metric is determined by 
these functions, natural operators, and $J$, it follows that holomorphic vector 
fields tangent to these orbits are Killing, and thus one obtains $U(1) \times 
U(1)$ invariance.
\end{proof}

\begin{proof}[Proof of Corollary \ref{c:solitoncor}] Theorem \ref{t:solitonthm} 
yields nontrivial steady soliton structures on $S^3 \times S^1$.  By taking 
products with flat tori we obtain nontrivial soliton structures on $S^3 \times 
T^k$ for all $k \geq 1$.    To obtain nontrivial solitons in dimension $n=3$, 
we note that it follows from the reduced soliton equations of Proposition 
\ref{p:solitonreduction} and elementary calculations using Lemmas 
\ref{l:Leeformlemma} and \ref{l:LeeformLDlemma} that the $1$-form $e^{-f} 
\theta$ is closed, and moreover satisfies $L_{e^{-f} \theta^{\sharp}} g = 0$, 
and thus $e^{-f} \theta$ is parallel.  Therefore the universal cover is 
isometric to a product $(S^3 \times 
\mathbb R, g' \oplus dt^2)$.  Since $H = \star \theta = \star e^f dt$, it 
follows that $\frac{\del}{\del t} \hook H = 0$.  Thus, setting $H' = i^* H$, 
where $i$ denotes the inclusion map of an $S^3$ leaf, it follows easily that 
$(H')^2 = i^*(H^2)$.  Also, in the construction of Theorem \ref{t:solitonthm}, 
we noted that the function $f$ was $Z,W$-invariant.  It follows, setting $f' = 
f \circ i$, that $(\N^2)' f' = i^* (\N^2 f)$.  Thus we conclude that for the 
structure $(g',H',f')$ on the $S^3$ leaf,
\begin{align*}
\Rc_{g'} - \tfrac{1}{4}(H')^2 + (\N^2)' f' = i^* \left( \Rc_g - \tfrac{1}{4} 
H^2 + \N^2 f \right) = 0,
\end{align*}
as required.
\end{proof}

\bibliographystyle{abbrv}

\begin{thebibliography}{10}

\bibitem{ASNDGKCY}
V.~{Apostolov} and J.~{Streets}.
\newblock {The nondegenerate generalized K\"ahler Calabi-Yau problem}.
\newblock {\em ArXiv e-prints}, Mar. 2017.

\bibitem{BHPV}
W.~P. Barth, K.~Hulek, C.~A.~M. Peters, and A.~Van~de Ven.
\newblock {\em Compact complex surfaces}, volume~4 of {\em Ergebnisse der
  Mathematik und ihrer Grenzgebiete. 3. Folge. A Series of Modern Surveys in
  Mathematics [Results in Mathematics and Related Areas. 3rd Series. A Series
  of Modern Surveys in Mathematics]}.
\newblock Springer-Verlag, Berlin, second edition, 2004.

\bibitem{Belgunmetric}
F.~A. Belgun.
\newblock On the metric structure of non-{K}\"ahler complex surfaces.
\newblock {\em Math. Ann.}, 317(1):1--40, 2000.

\bibitem{Ricciflowbook}
H.~D. Cao, B.~Chow, S.~C. Chu, and S.~T. Yau, editors.
\newblock {\em Collected papers on {R}icci flow}, volume~37 of {\em Series in
  Geometry and Topology}.
\newblock International Press, Somerville, MA, 2003.

\bibitem{ChenLuTian}
X.~Chen, P.~Lu, and G.~Tian.
\newblock A note on uniformization of {R}iemann surfaces by {R}icci flow.
\newblock {\em Proc. Amer. Math. Soc.}, 134(11):3391--3393, 2006.

\bibitem{DemaillyPaun}
J.-P. Demailly and M.~Paun.
\newblock Numerical characterization of the {K}\"ahler cone of a compact
  {K}\"ahler manifold.
\newblock {\em Ann. of Math. (2)}, 159(3):1247--1274, 2004.

\bibitem{DlousskyNAC}
G.~Dloussky.
\newblock On surfaces of class {${\rm VII}^+_0$} with numerically anticanonical
  divisor.
\newblock {\em Amer. J. Math.}, 128(3):639--670, 2006.

\bibitem{ElKacimi}
A.~El~Kacimi-Alaoui.
\newblock Op\'erateurs transversalement elliptiques sur un feuilletage
  riemannien et applications.
\newblock In {\em Functional analytic methods in complex analysis and
  applications to partial differential equations ({T}rieste, 1988)}, pages
  287--340. World Sci. Publ., River Edge, NJ, 1990.

\bibitem{GauduchonFibres}
P.~Gauduchon.
\newblock Fibr\'es hermitiens \`a endomorphisme de {R}icci non n\'egatif.
\newblock {\em Bull. Soc. Math. France}, 105(2):113--140, 1977.

\bibitem{Gauduchon1form}
P.~Gauduchon.
\newblock La {$1$}-forme de torsion d'une vari\'et\'e hermitienne compacte.
\newblock {\em Math. Ann.}, 267(4):495--518, 1984.

\bibitem{GauduchonWeyl}
P.~Gauduchon.
\newblock Structures de {W}eyl-{E}instein, espaces de twisteurs et vari\'et\'es
  de type {$S^1\times S^3$}.
\newblock {\em J. Reine Angew. Math.}, 469:1--50, 1995.

\bibitem{GauduchonIvanov}
P.~Gauduchon and S.~Ivanov.
\newblock Einstein-{H}ermitian surfaces and {H}ermitian {E}instein-{W}eyl
  structures in dimension {$4$}.
\newblock {\em Math. Z.}, 226(2):317--326, 1997.

\bibitem{GauduchonLCK}
P.~Gauduchon and L.~Ornea.
\newblock Locally conformally {K}\"ahler metrics on {H}opf surfaces.
\newblock {\em Ann. Inst. Fourier (Grenoble)}, 48(4):1107--1127, 1998.

\bibitem{Hamiltonsurfaces}
R.~S. Hamilton.
\newblock The {R}icci flow on surfaces.
\newblock In {\em Mathematics and general relativity ({S}anta {C}ruz, {CA},
  1986)}, volume~71 of {\em Contemp. Math.}, pages 237--262. Amer. Math. Soc.,
  Providence, RI, 1988.

\bibitem{IvanovPapa}
S.~Ivanov and G.~Papadopoulos.
\newblock Vanishing theorems and string backgrounds.
\newblock {\em Classical Quantum Gravity}, 18(6):1089--1110, 2001.

\bibitem{Ivey3}
T.~Ivey.
\newblock Ricci solitons on compact three-manifolds.
\newblock {\em Differential Geom. Appl.}, 3(4):301--307, 1993.

\bibitem{KatoHopf}
M.~Kato.
\newblock Topology of {H}opf surfaces.
\newblock {\em J. Math. Soc. Japan}, 27:222--238, 1975.

\bibitem{KatoHopferrata}
M.~Kato.
\newblock Erratum to: ``{T}opology of {H}opf surfaces'' [{J}.\ {M}ath.\ {S}oc.\
  {J}apan {\textbf 27} (1975), 222--238; {MR}0402128 (53 \#5949)].
\newblock {\em J. Math. Soc. Japan}, 41(1):173--174, 1989.

\bibitem{Kod2}
K.~Kodaira.
\newblock On the structure of compact complex analytic surfaces. {II}.
\newblock {\em Amer. J. Math.}, 88:682--721, 1966.

\bibitem{Kod3}
K.~Kodaira.
\newblock On the structure of compact complex analytic surfaces. {III}.
\newblock {\em Amer. J. Math.}, 90:55--83, 1968.

\bibitem{Koisorot}
N.~Koiso.
\newblock On rotationally symmetric {H}amilton's equation for
  {K}\"ahler-{E}instein metrics.
\newblock In {\em Recent topics in differential and analytic geometry},
  volume~18 of {\em Adv. Stud. Pure Math.}, pages 327--337. Academic Press,
  Boston, MA, 1990.

\bibitem{LiYauBog}
J.~Li, S.-T. Yau, and F.~Zheng.
\newblock On projectively flat {H}ermitian manifolds.
\newblock {\em Comm. Anal. Geom.}, 2(1):103--109, 1994.

\bibitem{LottDR}
J.~Lott.
\newblock Dimensional reduction and the long-time behavior of {R}icci flow.
\newblock {\em Comment. Math. Helv.}, 85(3):485--534, 2010.

\bibitem{Matsushima}
Y.~Matsushima.
\newblock Sur la structure du groupe d'hom\'eomorphismes analytiques d'une
  certaine vari\'et\'e k\"ahl\'erienne.
\newblock {\em Nagoya Math. J.}, 11:145--150, 1957.

\bibitem{OSW}
T.~Oliynyk, V.~Suneeta, and E.~Woolgar.
\newblock A gradient flow for worldsheet nonlinear sigma models.
\newblock {\em Nuclear Phys. B}, 739(3):441--458, 2006.

\bibitem{Perelman1}
G.~{Perelman}.
\newblock {The entropy formula for the Ricci flow and its geometric
  applications}.
\newblock {\em ArXiv Mathematics e-prints}, Nov. 2002.

\bibitem{PontecorvoCS}
M.~Pontecorvo.
\newblock Complex structures on {R}iemannian four-manifolds.
\newblock {\em Math. Ann.}, 309(1):159--177, 1997.

\bibitem{Streetsexpent}
J.~Streets.
\newblock Regularity and expanding entropy for connection {R}icci flow.
\newblock {\em J. Geom. Phys.}, 58(7):900--912, 2008.

\bibitem{StreetsPCFBI}
J.~Streets.
\newblock Pluriclosed flow, {B}orn-{I}nfeld geometry, and rigidity results for
  generalized {K}\"ahler manifolds.
\newblock {\em Comm. Partial Differential Equations}, 41(2):318--374, 2016.

\bibitem{PCF}
J.~Streets and G.~Tian.
\newblock A parabolic flow of pluriclosed metrics.
\newblock {\em Int. Math. Res. Not. IMRN}, (16):3101--3133, 2010.

\bibitem{GKRF}
J.~Streets and G.~Tian.
\newblock Generalized {K}\"ahler geometry and the pluriclosed flow.
\newblock {\em Nuclear Phys. B}, 858(2):366--376, 2012.

\bibitem{PCFReg}
J.~Streets and G.~Tian.
\newblock Regularity results for pluriclosed flow.
\newblock {\em Geom. Topol.}, 17(4):2389--2429, 2013.

\bibitem{Streetsthesis}
J.~D. Streets.
\newblock {\em Ricci {Y}ang-{M}ills flow}.
\newblock ProQuest LLC, Ann Arbor, MI, 2007.
\newblock Thesis (Ph.D.)--Duke University.

\bibitem{Telemancone}
A.~Teleman.
\newblock The pseudo-effective cone of a non-{K}\"ahlerian surface and
  applications.
\newblock {\em Math. Ann.}, 335(4):965--989, 2006.

\bibitem{TelemanBog}
A.~D. Teleman.
\newblock Projectively flat surfaces and {B}ogomolov's theorem on class {${\rm
  VII}_0$} surfaces.
\newblock {\em Internat. J. Math.}, 5(2):253--264, 1994.

\bibitem{TianKE}
G.~Tian.
\newblock On {C}alabi's conjecture for complex surfaces with positive first
  {C}hern class.
\newblock {\em Invent. Math.}, 101(1):101--172, 1990.

\bibitem{TianYauKE}
G.~Tian and S.-T. Yau.
\newblock K\"ahler-{E}instein metrics on complex surfaces with {$C_1>0$}.
\newblock {\em Comm. Math. Phys.}, 112(1):175--203, 1987.

\bibitem{TianZhu}
G.~Tian and X.~Zhu.
\newblock Uniqueness of {K}\"ahler-{R}icci solitons.
\newblock {\em Acta Math.}, 184(2):271--305, 2000.

\bibitem{VaismanLCK}
I.~Vaisman.
\newblock Locally conformal {K}\"ahler manifolds with parallel {L}ee form.
\newblock {\em Rend. Mat. (6)}, 12(2):263--284, 1979.

\bibitem{WangZhangSRF3}
G.~Wang and Y.~Zhang.
\newblock The {S}asaki-{R}icci flow on {S}asakian 3-spheres.
\newblock {\em Commun. Math. Stat.}, 1(1):43--71, 2013.

\bibitem{WangZhuKRS}
X.-J. Wang and X.~Zhu.
\newblock K\"ahler-{R}icci solitons on toric manifolds with positive first
  {C}hern class.
\newblock {\em Adv. Math.}, 188(1):87--103, 2004.

\bibitem{Youngthesis}
A.~N. Young.
\newblock {\em Modified {R}icci flow on a principal bundle}.
\newblock ProQuest LLC, Ann Arbor, MI, 2008.
\newblock Thesis (Ph.D.)--The University of Texas at Austin.

\bibitem{ZhuUniqueness}
X.~Zhu.
\newblock K\"ahler-{R}icci soliton typed equations on compact complex manifolds
  with {$C_1(M)>0$}.
\newblock {\em J. Geom. Anal.}, 10(4):759--774, 2000.

\end{thebibliography}

\end{document}